\documentclass[11pt]{amsart}
\usepackage{geometry}                
\geometry{a4paper}                   
\geometry{centering}
\usepackage{graphicx}
\usepackage{amssymb}
\usepackage{times}

\usepackage{amsmath,amssymb,graphics,epsfig}





\newcommand{\semin}[1]{\phi_{{\rm min}}(#1)}
\newcommand{\semax}[1]{\phi_{{\rm max}}(#1)}
\newcommand{\bsemin}[1]{\bar{\phi}_{{\rm min}}(#1)}
\newcommand{\bsemax}[1]{\bar{\phi}_{{\rm max}}(#1)}

\newcommand{\T}{{ \mathrm{\scriptscriptstyle T} }}
\renewcommand{\hat}{\widehat}
\renewcommand{\tilde}{\widetilde}

\newcommand{\diag}{{\rm diag}}
\newcommand{\supp}{{\rm supp}}
\newcommand{\ii}{{{i}}}

\newcommand{\ent}{{\rm ent}}
\renewcommand{\Pr}{{\mathrm{pr}}}

\newcommand{\RR}{\mathbb{R}}
\newcommand{\F}{\mathcal{F}}

\newcommand{\A}{\mathcal{A}}

\newcommand{\Gn}{G_n}
\newcommand{\Ep}{E}
\newcommand{\En}{E_n}

\long\def\comment#1{}
\newtheorem{theorem}{Theorem}
\newtheorem{algo}{Algorithm}

\newtheorem{corollary}{Corollary}
\newtheorem{lemma}{Lemma}

\newtheorem{condition}{Condition}

\theoremstyle{definition}

\newtheorem{remark}{Comment}[section]
\numberwithin{remark}{section}


\title[Uniform Post Selection Inference for Z-problems]{Uniform Post Selection Inference for LAD Regression and Other Z-estimation problems }
\author[]{A. Belloni}
\author[]{V. Chernozhukov}
\author[]{K. Kato}

\date{First version: May 2012,  this version May, 2014.  We would like to thank the participants of  Luminy conference on Nonparametric and high-dimensional statistics (December 2012), Oberwolfach workshop on Frontiers in Quantile Regression (November 2012),  8th World Congress in Probability and Statistics (August 2012),  and seminar at the University of Michigan (October 2012).  This paper was first presented in 8th World Congress in Probability and Statistics in August 2012. We would like to thank the editor, an associate editor, and anonymous referees for their careful review. We are also grateful to Sara van de Geer, Xuming He, Richard Nickl, Roger Koenker, Vladimir Koltchinskii, Enno Mammen, Steve Portnoy, Philippe Rigollet, Richard Samworth, and Bin Yu for useful comments and discussions. Research support from the National Science Foundation and  the Japan Society for the Promotion of Science is gratefully acknowledged.}

\begin{document}

\begin{abstract}
We develop uniformly valid  confidence regions for regression coefficients in a high-dimensional sparse median regression model with homoscedastic errors.  Our methods are based on  a moment equation that is immunized against non-regular estimation of the nuisance part of the median regression function by using Neyman's orthogonalization.   We establish that the resulting instrumental median regression estimator of a target regression coefficient is asymptotically normally distributed uniformly with respect to the underlying sparse model and is semi-parametrically efficient.  We also generalize our method to a general non-smooth Z-estimation framework with the number of target parameters $p_1$ being possibly much larger than the sample size $n$.
We extend Huber's results on asymptotic normality to this setting, demonstrating uniform asymptotic normality of the proposed estimators over $p_1$-dimensional rectangles, constructing simultaneous confidence bands on all of the $p_1$ target parameters, and establishing asymptotic validity of the bands uniformly over underlying approximately sparse models. \\

Keywords: Instrument; Post-selection inference; Sparsity; Neyman's Orthogonal Score test; Uniformly valid inference; Z-estimation.\\

Publication: Biometrika, 2014 doi:10.1093/biomet/asu056

\end{abstract}

\maketitle

\section{Introduction}\label{sec: intro}

We consider independent and identically distributed data vectors $(y_i,x_i^{\T},d_i)^{\T}$ that obey the regression model
\begin{equation}
\label{Eq:direct}
y_i = d_i\alpha_0 + x_i^{\T}\beta_0 + \epsilon_i  \quad  (i=1,\ldots,n),
\end{equation}
where $d_i$ is the main regressor and coefficient $\alpha_0$ is the main parameter of interest. The vector $x_i$ denotes other high-dimensional regressors or controls. The regression error $\epsilon_i$ is independent of $d_{i}$ and $x_i$ and has median zero, that is, $\Pr(\epsilon_i \leq 0) = 1/2$. The  distribution function of $\epsilon_i$ is denoted by $F_{\epsilon}$ and admits a density function  $f_{\epsilon}$ such that $f_{\epsilon}(0)>0$.  The assumption  motivates the use of the least absolute deviation or median regression, suitably adjusted  for use in high-dimensional settings.
The framework (\ref{Eq:direct}) is of interest in program evaluation, where $d_i$ represents the treatment or policy variable known a priori and whose impact we would like
to infer \cite{robinson,robins:pl,imbens:review}. We shall also discuss a generalization to the case where there are many parameters of interest, including the case
 where the identity of a regressor of interest is unknown a priori.

The dimension $p$ of controls  $x_i$  may be much larger than $n$,
which creates a challenge for inference on $\alpha_0$. Although the unknown nuisance parameter $\beta_0$ lies in this large space, the key assumption that will make estimation possible is its sparsity, namely $T=\supp(\beta_0)$ has $s<n$ elements, where the notation $\supp (\delta) = \{ j \in \{ 1,\dots,p \} : \delta_j \neq 0 \}$ denotes the support of a vector $\delta \in \RR^{p}$. Here $s$ can depend on $n$, as we shall use array asymptotics. Sparsity motivates the use of regularization or model selection methods.

A non-robust approach to inference in this setting would be first to perform model selection via the $\ell_1$-penalized median regression estimator
 \begin{equation}
 \label{def:l1qr}
 (\hat\alpha, \hat \beta) \in \arg\min_{\alpha,\beta} \En ( | y_\ii - d_\ii\alpha - x_\ii^{\T}\beta| ) + \frac{\lambda}{n}\|\Psi(\alpha,\beta^{\T})^{\T} \|_1,
 \end{equation}
where $\lambda$ is a penalty parameter and $\Psi^2=\diag\{ \En( d_\ii^2 ),\En (x_{\ii 1}^2),\ldots,\En(x_{\ii p}^2) \}$ is a diagonal matrix with normalization weights,  where the notation $\En ( \cdot )$ denotes the  average $n^{-1} \sum_{i=1}^n$ over the index $i = 1, \dots, n$. Then one would use the post-model selection estimator
 \begin{equation}
 \label{def:postl1qr}
 (\widetilde \alpha, \widetilde\beta) \in \arg\min_{\alpha,\beta} \left\{  \En ( | y_\ii  - d_\ii\alpha - x_\ii^{\T}\beta| ) : \ \beta_{j} = 0, \ j \notin \supp(\hat \beta) \right\},
 \end{equation}
 to perform inference for $\alpha_0$.

This approach is justified if (\ref{def:l1qr}) achieves perfect model selection with probability approaching unity, so that
the estimator (\ref{def:postl1qr}) has the oracle property. However conditions for perfect selection are  very restrictive in this model, and, in particular, require strong separation of non-zero coefficients away from zero.  If these conditions do not hold,   the estimator $\tilde \alpha$ does not converge to $\alpha_0$ at the $n^{-1/2}$ rate, uniformly with respect to the underlying model,
and so the usual inference breaks down \cite{LeebPotscher2005}.  We shall demonstrate the breakdown of such naive inference in Monte Carlo experiments where non-zero coefficients in $\beta_0$ are not significantly separated from zero.

The breakdown of standard inference does not mean that the aforementioned procedures are not suitable for prediction. Indeed, the estimators (\ref{def:l1qr})  and  (\ref{def:postl1qr})  attain essentially optimal rates $\{(s \log p)/ n\}^{1/2}$ of convergence for estimating the entire median regression function \cite{BC-SparseQR,Wang2012}.  This property means that while  these procedures
will not deliver perfect model recovery, they will only make moderate selection mistakes, that is, they omit controls only if coefficients are local to zero.

 In order to provide uniformly valid inference, we propose a method whose performance does not require perfect model selection, allowing potential moderate model selection mistakes.  The latter feature is critical in achieving uniformity over a large class of data generating processes,   similarly to the results for instrumental regression and  mean regression studied in \cite{c.h.zhang:s.zhang} and \cite{BellChenChernHans:nonGauss, BCH2011:InferenceGauss,BelloniChernozhukovHansen2011}. This allows us to overcome the impact of moderate model selection mistakes on inference, avoiding in part the criticisms in \cite{LeebPotscher2005}, who prove that the oracle property achieved by the naive estimators  implies the failure of uniform validity of inference and their semiparametric inefficiency \cite{leeb:potscher:hodges}.

In order to achieve robustness with respect to moderate selection mistakes, we shall construct
an orthogonal moment equation that identifies the target parameter.  The following auxiliary equation,\begin{equation}\label{Eq:indirect}
d_i =  x_i^{\T}\theta_{0} + v_i, \quad \Ep ( v_i \mid x_i )=0  \quad  (i=1,\ldots,n),
\end{equation}
which describes the dependence of the regressor of interest $d_{i}$ on the other controls $x_{i}$, plays a key role.  We shall assume the sparsity of $\theta_{0}$, that is, $T_d = \supp(\theta_{0})$ has at most $s<n$ elements, and estimate the relation (\ref{Eq:indirect}) via lasso or post-lasso least squares methods described below.

We shall use $v_i$ as an instrument in
 the following moment equation for $\alpha_0$:
\begin{equation}\label{eq: est equation}
\Ep \{ \varphi(y_i -d_i\alpha_0-x_i^{\T}\beta_0) v_i \} =0  \quad (i =1,\dots,n),
\end{equation}
where $\varphi(t) = 1/2 - 1\{t \leq 0\}$.   We shall use the empirical analog of (\ref{eq: est equation}) to form
an instrumental median regression estimator of $\alpha_0$, using a plug-in estimator for $x_i^{\T}\beta_0$. The moment equation (\ref{eq: est equation}) has
the orthogonality property
\begin{equation}\label{eq:explain:robustness}
\left.\frac{\partial}{\partial \beta} \Ep \{ \varphi(y_i -d_i\alpha_0-x_i^{\T}\beta) v_i \} \right|_{\beta= \beta_0} =0 \quad (i =1,\dots,n),
\end{equation}
so the estimator of $\alpha_0$ will be unaffected by estimation of $x_i^{\T}\beta_0$ even if $\beta_0$ is estimated
at a slower rate than $n^{-1/2}$, that is, the rate of $o(n^{-1/4})$ would suffice.  This slow rate of estimation of the nuisance function permits
the use of non-regular estimators of $\beta_0$, such as post-selection or regularized estimators that are not $n^{-1/2}$ consistent uniformly over
the underlying model. The orthogonalization ideas can be traced back to \cite{Neyman1959} and also play an important role in doubly robust estimation \cite{robins:dr}.

Our estimation procedure has three steps:
(i) estimation of the confounding function $x_i^{\T}\beta_0$ in (\ref{Eq:direct});  (ii) estimation of the instruments $v_i$ in (\ref{Eq:indirect}); and (iii) estimation of the target parameter $\alpha_0$ via empirical analog of (\ref{eq: est equation}).
Each step is computationally tractable, involving solutions of convex problems and a one-dimensional search.

Step (i) estimates for the nuisance function $x_i^{\T}\beta_0$ via either the $\ell_1$-penalized median regression estimator (\ref{def:l1qr}) or  the associated post-model selection estimator (\ref{def:postl1qr}).

Step (ii)  provides estimates $\hat v_i$ of  $v_{i}$ in (\ref{Eq:indirect}) as
$ \hat v_i = d_i - x_i^{\T} \hat \theta$ or   $\hat v_i = d_i - x_i^{\T} \widetilde \theta$ $(i=1,\ldots,n)$.
The first is based on the heteroscedastic lasso
estimator $\hat \theta$,  a version  of the lasso of \cite{T1996}, designed to address non-Gaussian and heteroscedastic errors \cite{BellChenChernHans:nonGauss},
\begin{equation}
\label{Estlasso1}
\hat \theta \in \arg \min_{\theta} \En \{  ( d_\ii - x_\ii^{\T}\theta)^2 \} + \frac{\lambda}{n}\|\hat \Gamma\theta\|_1,
\end{equation}
where $\lambda$ and $\hat \Gamma$ are the penalty level and data-driven penalty loadings defined in the Supplementary Material. The second is based on the associated post-model selection estimator and $\widetilde \theta$, called the post-lasso estimator:
\begin{equation}
\label{Estpostlasso1}
\widetilde \theta \in \arg\min_{\theta} \left [  \En \{ (d_\ii - x_\ii^{\T}\theta)^2\} : \ \theta_j = 0, \  j \notin \supp(\hat \theta)  \right ].
\end{equation}

Step (iii) constructs an estimator $\check \alpha$ of the coefficient $\alpha_0$ via an instrumental median regression \cite{ch:iqrWeakId}, using $(\hat v_i)_{i=1}^n$ as instruments,  defined by
\begin{equation}
\label{EstIV}
\check \alpha \in \arg \min_{\alpha \in \hat{\A}} L_n(\alpha), \quad  L_n(\alpha)  = \frac{4 |  \En \{ \varphi(y_\ii - x_\ii^{\T}\hat \beta - d_\ii \alpha)\hat v_\ii \} |^2}{\En ( \hat v_\ii^2 )},
\end{equation}
where $\hat{\A}$ is a possibly stochastic parameter space for $\alpha_0$. We suggest $\hat{\A} = [ \hat \alpha -10/b,  \hat \alpha + 10/b]$ with  $b =\{ \En (d_\ii^2) \}^{1/2}\log n$, though we allow for other choices.

Our main result establishes that under homoscedasticity, provided that $(s^3 \log^3 p)/n \to 0$ and other
regularity conditions hold, despite possible
model selection mistakes in Steps (i) and (ii),  the estimator $\check \alpha$
obeys
\begin{equation}
\label{Eq:Result1}
\sigma_n^{-1} n^{1/2} (\check \alpha - \alpha_0)  \rightarrow N(0,1)
\end{equation}
in distribution, where $\sigma_n^2 = 1/\{ 4f_\epsilon^2 \Ep (v_\ii^2) \}$ with $f_\epsilon =f_\epsilon(0)$ is the semi-parametric efficiency bound for regular estimators of $\alpha_0$. In the low-dimensional case, if $p^3 = o(n)$, the asymptotic behavior of our estimator coincides with that of the standard median regression
without selection or penalization, as derived in \cite{He:Shao}, which is also semi-parametrically efficient in this case. However, the behaviors of our estimator and the standard median regression differ dramatically, otherwise, with the standard estimator even failing to be consistent when $p > n$.  Of course, this improvement
in the performance comes at the cost of assuming sparsity.

 An alternative, more robust expression for $\sigma_n^2$ is given by
\begin{equation}
\label{Eq:RobustSE}
 \sigma_n^2 = J^{-1} \Omega J^{-1}, \quad
\Omega = \Ep (v_\ii^2)/4, \quad J=  \Ep ( f_\epsilon  d_\ii v_\ii ).
\end{equation}
We estimate $\Omega$ by the plug-in method and  $J$ by Powell's (\cite{Powell1986}) method. Furthermore, we show that the Neyman-type projected score statistic $nL_n(\alpha)$ can be used for testing the null hypothesis $\alpha=\alpha_0$, and converges in distribution to a $\chi^2_1$ variable under the null hypothesis, that is,
\begin{equation}\label{Eq:InferenceLn}
n L_n(\alpha_0)  \rightarrow \chi^2_1
\end{equation}
in distribution. This allows us to construct a confidence region with asymptotic coverage $1-\xi$ based on inverting the score statistic $nL_n(\alpha)$:
\begin{equation}
\label{Eq:Result2}
\hat A_{\xi} = \{ \alpha \in \hat{\A}:  n L_n(\alpha) \leq q_{1-\xi} \}, \quad \Pr ( \alpha_0\in \hat{A}_{\xi} )\to 1-\xi,
\end{equation}
where $q_{1-\xi}$ is the $(1-\xi)$-quantile of the $\chi_1^2$-distribution.

The robustness with respect to moderate model selection mistakes, which is due to (\ref{eq:explain:robustness}), allows (\ref{Eq:Result1}) and (\ref{Eq:InferenceLn}) to hold uniformly over a large class of data generating processes. Throughout the paper, we use array asymptotics, asymptotics where the model changes with $n$, to better capture finite-sample phenomena such as small coefficients that are local to zero. This ensures the robustness of conclusions with respect to perturbations of the data-generating process along various model sequences.  This robustness, in turn, translates into uniform validity of confidence regions over many data-generating processes.

The second set of main results addresses  a more general setting by allowing  $p_1$-dimensional
target parameters defined via Huber's Z-problems to be of interest, with dimension $p_1$ potentially much larger than the sample size $n$, and also allowing for approximately sparse models instead of exactly sparse models. This framework covers a wide variety  of semi-parametric models, including those with smooth and non-smooth score functions. We provide sufficient conditions to derive a uniform Bahadur representation, and establish uniform asymptotic normality, using central limit theorems and bootstrap results of \cite{CCK:AoS}, for the entire $p_1$-dimensional vector. The latter result holds uniformly over high-dimensional rectangles of dimension $p_1 \gg n$ and over an underlying approximately sparse model, thereby extending previous results from the setting with $p_1 \ll n$ \cite{Huber1973,Portnoy1984,Portnoy1985,He:Shao} to that with $p_1 \gg n$.

In what follows, the $\ell_{2}$ and $\ell_{1}$ norms are denoted by
$\|\cdot\|$ and $\| \cdot \|_{1}$, respectively,  and the $\ell_{0}$-norm, $\|\cdot\|_0$, denotes the number of non-zero components of a vector.
We use the notation $a \vee b = \max( a, b)$ and $a \wedge b = \min(a , b)$. Denote by $\Phi (\cdot)$ the distribution function of the standard normal distribution. We assume that the quantities such as $p,s$, and hence $y_{i},x_{i}, \beta_{0}, \theta_{0}, T$ and $T_{d}$ are all dependent on the sample size $n$, and allow for the case where $p=p_{n} \to \infty$ and $s=s_{n} \to \infty$ as $n \to \infty$. We shall omit the dependence of these quantities on $n$ when it does not cause confusion. For a class  of measurable functions $\F$ on a measurable space,  let $\mathrm{cn} (\epsilon,\F,\| \cdot \|_{Q,2})$ denote its $\epsilon$-covering number with respect to the $L^{2}(Q)$ seminorm $\| \cdot \|_{Q,2}$,  where $Q$ is a finitely discrete measure on the space, and  let
$\mathrm{ent}(\varepsilon, \F) = \log \sup_{Q} \mathrm{cn} (\varepsilon \| F \|_{Q,2}, \F,  \| \cdot \|_{Q,2})$ denote the uniform entropy number where $F = \sup_{f \in \F} |f|$.

\section{The Methods, Conditions, and Results}\label{Sec:Model}

\subsection{The methods} Each of the steps outlined in Section \ref{sec: intro} could be implemented by several estimators. Two possible implementations are the following.

\begin{algo}
The algorithm is based on post-model selection estimators. \\
\enspace \emph{Step} (i). Run post-$\ell_1$-penalized median regression  (\ref{def:postl1qr}) of $y_i$ on  $d_i$ and $x_i$; keep fitted value $ x_i^{\T}\widetilde \beta$.\\
\enspace \emph{Step} (ii). Run  the post-lasso estimator (\ref{Estpostlasso1}) of $ d_i$ on $ x_i$; keep the residual $\hat v_i=d_i-x_i^{\T}\widetilde \theta$.  \\
\enspace \emph{Step} (iii). Run instrumental median regression (\ref{EstIV}) of $y_i - x_i^{\T}\widetilde \beta$ on $d_i$
using $\hat v_i$ as the instrument.
\enspace Report $\check\alpha$ and perform inference based upon (\ref{Eq:Result1}) or (\ref{Eq:Result2}).
\end{algo}
\begin{algo}
The algorithm is based on regularized estimators. \\
\enspace \emph{Step} (i). Run $\ell_1$-penalized median regression  (\ref{def:postl1qr}) of $y_i$ on  $d_i$ and $x_i$; keep fitted value $ x_i^{\T}\widetilde \beta$.\\
\enspace \emph{Step} (ii). Run  the lasso estimator (\ref{Estlasso1}) of $ d_i$ on $ x_i$; keep the residual $\hat v_i=d_i-x_i^{\T}\widetilde \theta$.  \\
\enspace \emph{Step} (iii). Run instrumental median regression (\ref{EstIV}) of $y_i - x_i^{\T}\widetilde \beta$ on $d_i$
using $\hat v_i$ as the instrument. Report $\check\alpha$ and perform inference based upon (\ref{Eq:Result1}) or (\ref{Eq:Result2}).
\end{algo}

In order to perform $\ell_{1}$-penalized median regression and lasso, one has to choose the penalty levels suitably. We record our penalty choices in the Supplementary Material. Algorithm 1 relies on the post-selection estimators that refit the non-zero coefficients without the penalty term to reduce the bias, while Algorithm 2 relies on the penalized estimators. In Step (ii), instead of the lasso or the post-lasso estimators, Dantzig selector \cite{CandesTao2007} and Gauss-Dantzig estimators could be used. Step (iii) of both algorithms relies on instrumental median regression (\ref{EstIV}).

\begin{remark} Alternatively, in this step, we can  use a one-step estimator $\check \alpha$ defined by\begin{equation}\label{one step} \check \alpha = \hat \alpha + [ \En \{ f_{\epsilon}(0) \hat v_\ii^2 \} ]^{-1} \En \{ \varphi (y_\ii- d_{\ii} \hat\alpha -x_\ii^{\T}\hat \beta)\hat v_\ii \},
  \end{equation}
where $\hat \alpha$ is the $\ell_1$-penalized median regression estimator (\ref{def:l1qr}). Another possibility is to use the post-double selection median regression estimation, which is simply the median regression of $y_i$ on $d_i$ and the union of controls selected in both Steps (i) and (ii),  as $\check \alpha$.  The Supplemental Material shows that these alternative estimators also solve (\ref{EstIV}) approximately. \end{remark}


\subsection{Regularity conditions}

We  state regularity conditions sufficient for validity of the main estimation and inference results.
The behavior of {sparse eigenvalues} of the population Gram matrix $\Ep (\tilde x_\ii \tilde x_\ii^{\T})$ with  $\tilde x_i =(d_i,x_i^{\T})^{\T}$ plays an important role in the analysis of $\ell_{1}$-penalized median regression and lasso.
Define
the minimal and maximal $m$-sparse eigenvalues of the population Gram matrix as
\begin{equation}
\label{Def:RSE1}
\bsemin{m} = \min_{1 \leq \| \delta \|_{0} \leq m} \frac{\delta^{\T} \Ep ( \tilde x_\ii \tilde x_\ii^{\T} )\delta}{\|\delta\|^2}, \quad
\bsemax{m} = \max_{1 \leq \| \delta \|_{0} \leq m} \frac{\delta^{\T} \Ep ( \tilde x_\ii \tilde x_\ii^{\T} )\delta}{\|\delta\|^2},
\end{equation}
where $m=1,\dots,p$.
 Assuming that $\bsemin{m} >0$ requires that all population Gram submatrices formed by any $m$ components of $\tilde x_i$  are positive definite.

The main condition, Condition \ref{Condition I}, imposes sparsity of the vectors $\beta_{0}$ and $\theta_{0}$  as well as other more technical assumptions.
Below let $c_{1}$ and $C_{1}$ be given positive constants, and let $\ell_n \uparrow \infty, \delta_n \downarrow 0$, and $\Delta_n \downarrow 0$ be given sequences of  positive constants.

\begin{condition}  Suppose that
\label{Condition I}
(i) $\{(y_i,d_i,x_i^{\T} )^{\T} \}_{i=1}^n$ is a sequence of independent and identically distributed random vectors generated according to models
(\ref{Eq:direct}) and (\ref{Eq:indirect}), where $\epsilon_{i}$ has distribution distribution function $F_{\epsilon}$ such that $F_{\epsilon}(0) =1/2$
 and is independent of the random vector $(d_i,x_i^{\T} )^{\T}$;
(ii) $\Ep (v_\ii^2 \mid x) \geq c_{1}$ and $\Ep (|v_\ii|^3\mid x_\ii) \leq C_{1}$ almost surely; moreover,
$\Ep (d_\ii^4) + \Ep (v_\ii^6)+ \max_{j=1,\ldots,p} \Ep (x_{\ii j}^2{d}_\ii^2) +\Ep (|x_{\ii j} v_\ii|^{3})  \leq C_{1}$; (iii) there exists $s =s_{n}  \geq 1$ such that $\|\beta_0\|_0\leq s$ and $\|\theta_{0}\|_0 \leq s$; (iv) the error distribution $F_{\epsilon}$ is absolutely continuous with continuously differentiable density $f_\epsilon(\cdot)$ such that  $f_\epsilon(0) \geq c_{1}$ and $ f_\epsilon(t) \vee |f_\epsilon'(t)| \leq C_{1}$ for all $t\in \RR$;  (v) there exist constants $K_{n}$ and $M_{n}$ such that $K_{n}\geq \max_{j=1,\ldots,p} | x_{ij} |$ and $M_n \geq 1\vee  | x_{i}^{\T} \theta_0|$ almost surely, and they obey the growth condition
$
\{K_{n}^4+(K_{n}^2\vee M_n^4)s^2 + M_n^2s^3\}\log^3(p\vee n) \leq n\delta_n;$
(vi)  $c_{1} \leq \bsemin{\ell_n s} \leq \bsemax{\ell_n s} \leq C_{1}$.
\end{condition}

Condition \ref{Condition I} (i) imposes the setting discussed in the previous
section with the zero conditional median of the error distribution.
Condition \ref{Condition I} (ii) imposes moment conditions on the structural errors and
regressors to ensure good model selection performance of lasso applied to
equation (\ref{Eq:indirect}). Condition \ref{Condition I} (iii) imposes
sparsity of the high-dimensional vectors $\beta_0$ and $\theta_{0}$. Condition \ref{Condition I} (iv)
is a set of standard assumptions in median regression
\cite{koenker:book} and in instrumental quantile regression. Condition \ref{Condition I} (v) restricts the sparsity index, namely
$s^3 \log^3(p \vee n) = o(n)$ is required; this is analogous to the restriction $p^3(\log p)^2 = o(n)$ made in \cite{He:Shao} in the low-dimensional setting. The uniformly bounded regressors condition can be relaxed with minor modifications provided the bound holds with probability approaching unity. Most importantly, no assumptions on the separation from zero of the non-zero coefficients of $\theta_{0}$ and $\beta_0$ are made. Condition \ref{Condition I} (vi) is quite plausible for many designs of interest. Conditions \ref{Condition I} (iv) and (v) imply the equivalence between the norms induced by the empirical and population Gram matrices over $s$-sparse vectors by \cite{RudelsonZhou2011}.

\subsection{Results}   The following result is derived as an application of a more general Theorem \ref{theorem2} given in Section 3; the proof is given in the Supplementary Material.

\begin{theorem}
\label{theorem:inferenceAlg1}
Let $\check \alpha$ and $L_n(\alpha_{0})$ be the estimator and statistic obtained by applying either Algorithm 1 or 2. Suppose that Condition \ref{Condition I}  is satisfied for all $n \geq 1$. Moreover, suppose that with probability at least $1-\Delta_n$, $\|\hat \beta\|_0\leq C_{1}s$.
Then, as $n \to \infty$, $\sigma_n^{-1} n^{1/2} (\check \alpha - \alpha_0) \rightarrow N(0,1)$ and $nL_n(\alpha_0) \rightarrow \chi^2_1$ in distribution, where $\sigma^2_n=  1/\{ 4f_\epsilon^2\Ep (v_\ii^2) \}$.
\end{theorem}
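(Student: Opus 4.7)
The plan is to reduce the theorem to a classical Z-estimator linearization, where the orthogonality relation in the text is what rescues the rate despite non-regular pilots $\hat\beta$ and $\hat\theta$. Concretely, my target is to prove the Bahadur-type expansion
\begin{equation*}
n^{1/2}(\check\alpha - \alpha_{0}) = -\frac{1}{f_{\epsilon}\Ep(v_{\ii}^{2})}\, n^{1/2}\En\{\varphi(\epsilon_{\ii})v_{\ii}\} + o_{\Pr}(1),
\end{equation*}
after which an ordinary Lindeberg CLT, combined with $\Ep\{\varphi(\epsilon_\ii)^{2}v_\ii^{2}\}=\Ep(v_\ii^{2})/4$ under Condition \ref{Condition I}(i), gives both conclusions: asymptotic normality with variance $\sigma_{n}^{2}$, and $nL_{n}(\alpha_{0})\to\chi_{1}^{2}$ because the denominator $\En(\hat v_\ii^{2})$ consistently estimates $\Ep(v_\ii^{2})$.

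The first block of work is to pin down rates for the two pilots. From Condition \ref{Condition I} together with the assumed sparsity bound $\|\hat\beta\|_{0}\le C_{1}s$, the analysis of $\ell_{1}$-penalized median regression and of (post-)lasso under heteroscedastic errors yields, with probability $1-o(1)$, the prediction-norm rates $\{\En[(x_\ii^{\T}(\hat\beta-\beta_{0}))^{2}]\}^{1/2}=O_{\Pr}(\{(s\log p)/n\}^{1/2})$ and similarly for $\hat\theta$, together with the sparsity control $\|\hat\theta\|_{0}\lesssim s$ for post-lasso (and a comparable $\ell_{1}$ bound for lasso). Condition \ref{Condition I}(v) then guarantees that $\{(s^{2}\log p)/n\}^{1/2}=o(n^{-1/4})$, so both nuisance estimators achieve $o_{\Pr}(n^{-1/4})$ in prediction norm; this is the threshold that Neyman orthogonality promotes to $o_{\Pr}(n^{-1/2})$ at the level of the moment condition. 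I would also record $\En(\hat v_\ii^{2})=\Ep(v_\ii^{2})+o_{\Pr}(1)$.

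The technical heart of the argument, and the step I expect to be the main obstacle, is the uniform-in-$\alpha$ stochastic equicontinuity statement
\begin{equation*}
\sup_{|\alpha-\alpha_{0}|\le n^{-1/2}\ell_{n}}\Bigl|\En\{[\varphi(y_\ii-d_\ii\alpha-x_\ii^{\T}\hat\beta)-\varphi(\epsilon_\ii-d_\ii(\alpha-\alpha_{0}))]\hat v_\ii\} - R_{n}\Bigr| = o_{\Pr}(n^{-1/2}),
\end{equation*}
with $R_{n}$ the deterministic bias that orthogonality forces to be $o_{\Pr}(n^{-1/2})$. This requires two ingredients. First, a maximal inequality for the empirical process indexed by the class $\{(y,d,x)\mapsto[\varphi(y-d\alpha-x^{\T}\beta)-\varphi(y-d\alpha-x^{\T}\beta_{0})]x^{\T}\gamma\}$ over sparse $\beta,\gamma$, whose uniform entropy is controlled by a VC-type bound of order $s\log(p\vee n)$; under Condition \ref{Condition I}(v) this yields a bound of order $\{(s^{2}\log^{2} p)/n\}^{1/2}\cdot\|x^{\T}(\hat\beta-\beta_{0})\|_{\Pr,2}^{1/2}=o(n^{-1/2})$. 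Second, a second-order Taylor expansion of the conditional expectation $\beta\mapsto\Ep\{\varphi(\epsilon-d(\alpha-\alpha_{0})-x^{\T}(\beta-\beta_{0}))v\}$ around $\beta_{0}$: the first-order term vanishes by the orthogonality identity in the introduction, and the second-order term is bounded by $\sup_{t}|f_{\epsilon}'(t)|\cdot\Ep[(x^{\T}(\hat\beta-\beta_{0}))^{2}|v|]=O_{\Pr}((s\log p)/n)=o(n^{-1/2})$. Replacing $v_\ii$ by $\hat v_\ii$ contributes an analogous term $O_{\Pr}((s\log p)/n)$ via Cauchy--Schwarz and the $\hat\theta$ rate.

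Given this linearization at $\alpha=\alpha_{0}$, the last step is a convexity/consistency argument for $\check\alpha$ itself. Using the criterion $L_{n}(\alpha)$, the one-step characterization remark, and the preliminary rate $|\hat\alpha-\alpha_{0}|=O_{\Pr}(\{(s\log p)/n\}^{1/2})$ from \cite{BC-SparseQR} to confine the search to the neighbourhood $|\alpha-\alpha_{0}|\le n^{-1/2}\ell_{n}$, I apply the smoothed expansion
\begin{equation*}
\En\{\varphi(y_\ii-d_\ii\alpha-x_\ii^{\T}\hat\beta)\hat v_\ii\} = \En\{\varphi(\epsilon_\ii)v_\ii\} - f_{\epsilon}\Ep(v_\ii^{2})(\alpha-\alpha_{0}) + o_{\Pr}(n^{-1/2}),
\end{equation*}
where I use $\Ep(d_\ii v_\ii)=\Ep(v_\ii^{2})$ since $\Ep(v_\ii\mid x_\ii)=0$. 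Solving for $\check\alpha$ gives the Bahadur expansion displayed above and hence $\sigma_{n}^{-1}n^{1/2}(\check\alpha-\alpha_{0})\to N(0,1)$. Inserting $\alpha=\alpha_{0}$ into $nL_{n}(\alpha_{0})$ and using $\En(\hat v_\ii^{2})\to_{\Pr}\Ep(v_\ii^{2})$ yields $nL_{n}(\alpha_{0})=\{2n^{1/2}\En[\varphi(\epsilon_\ii)v_\ii]\}^{2}/\Ep(v_\ii^{2})+o_{\Pr}(1)\to\chi_{1}^{2}$, completing the proof.
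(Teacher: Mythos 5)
Your proposal follows the same route as the paper: the paper proves Theorem \ref{theorem:inferenceAlg1} by verifying Conditions \ref{condition: SP} and \ref{condition: AS} and invoking the general Theorem \ref{theorem2}, whose proof consists of exactly the ingredients you describe --- a maximal inequality over a sparse VC-type class with entropy $O(s\log(a_n/\varepsilon))$, a second-order Taylor expansion in which Neyman orthogonality annihilates the first-order bias, a consistency step, and solution of the linearized moment equation --- followed by a self-normalized CLT for $nL_n(\alpha_0)$. So your strategy is the paper's, inlined for the median-regression case rather than routed through the abstract theorem.

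Two steps as written do not go through quantitatively, however. First, your maximal-inequality bound $\{(s^2\log^2 p)/n\}^{1/2}\,\|x_\ii^{\T}(\hat\beta-\beta_0)\|_{P,2}^{1/2}$ is of order $s^{5/4}(\log p)^{5/4}n^{-3/4}$, which is $o(n^{-1/2})$ only if $s^5\log^5 p=o(n)$ --- strictly stronger than Condition \ref{Condition I}(v). The bound the paper actually uses (Lemma \ref{lemma:CCK} in Step 1) is of order $n^{-1/2}\{s(\log a_n)L_{2n}\rho_n^{\varsigma}\}^{1/2}$ plus a higher-order term, which with $\varsigma=1$, $L_{2n}\lesssim M_n$ and $\rho_n\lesssim(s\log a_n/n)^{1/2}$ gives $M_n^{1/2}(s\log a_n)^{3/4}n^{-3/4}=o(n^{-1/2})$ precisely under $M_n^2 s^3\log^3(p\vee n)=o(n)$; you need the variance proxy inside the square root, not its square. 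Second, your localization is circular as stated: the pilot only gives $|\hat\alpha-\alpha_0|=O_P(\{(s\log p)/n\}^{1/2})$ and $\hat{\A}$ has width of order $1/\log n$, so you cannot confine the optimization to $|\alpha-\alpha_0|\le n^{-1/2}\ell_n$ before the theorem is proved. The expansion must be established uniformly over an $O(\rho_n)$-neighbourhood, and one must first show $|\check\alpha-\alpha_0|=O_P(\rho_n)$ using the global identifiability bound $2|\Ep[\psi\{w,\alpha,h\}]|\ge|\Gamma(\alpha-\alpha_0)|\wedge c_1$ together with the empirical-process bound over all of $\hat\A$ (Steps 2--3 of the paper), and only then solve the linearized equation. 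With these two repairs your argument coincides with the paper's.
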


Theorem  \ref{theorem:inferenceAlg1}  shows that Algorithms 1 and 2 produce estimators $\check \alpha$ that perform
equally well, to the first order, with asymptotic variance equal to the semi-parametric efficiency bound; see the Supplemental Material for further discussion.   Both algorithms rely on sparsity of $\hat \beta$ and $\hat \theta$. Sparsity of the latter follows immediately under sharp penalty choices for optimal rates. The sparsity for the former potentially requires a higher penalty level, as shown in \cite{BC-SparseQR}; alternatively, sparsity for the estimator in Step 1 can also be achieved by truncating the smallest components of $\hat \beta$.  The Supplemental Material shows that suitable truncation leads to the required sparsity while preserving the rate of convergence.

An important consequence of these results is the following corollary. Here $\mathcal{P}_{n}$ denotes a collection of distributions for $\{ (y_{i},d_{i},x_i^{\T} )^{\T}  \}_{i=1}^{n}$ and for $P_n \in \mathcal{P}_n$ the notation $\Pr_{P_{n}}$ means that under $\Pr_{P_{n}}$, $\{ (y_{i},d_{i},x_i^{\T} )^{\T}  \}_{i=1}^{n}$ is distributed according to the law determined by $P_{n}$.

\begin{corollary}
\label{cor:Uniformity}
Let  $\check\alpha$ be the estimator of $\alpha_{0}$ constructed according to either Algorithm 1 or 2, and for every $n \geq 1$,
let $\mathcal{P}_{n}$ be the collection of all distributions of $\{ (y_{i},d_{i},x_i^{\T} )^{\T}  \}_{i=1}^{n}$ for which Condition \ref{Condition I} holds and
  $\|\hat \beta\|_0\leq C_{1}s$  with probability at least $1-\Delta_n$.  Then for
 $\hat{A}_{\xi}$ defined in (\ref{Eq:Result2}),
\begin{align*}
\sup_{P_{n} \in  \mathcal{P}_{n}} \left | \Pr_{P_{n}} \left \{ \alpha_{0} \in  [ \check \alpha \pm  \sigma_{n} n^{-1/2} \Phi^{-1} (1-\xi/2) ] \right \}  - (1- \xi)  \right | & \to 0, \\
\sup_{P_{n} \in  \mathcal{P}_{n}} \left | \Pr_{P_{n}} ( \alpha_0 \in \hat A_{\xi} ) - (1-\xi) \right | & \to 0, \quad n \to \infty.
\end{align*}
\end{corollary}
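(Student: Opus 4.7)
The plan is to derive the corollary as a uniform reformulation of Theorem \ref{theorem:inferenceAlg1} via a standard subsequence/contradiction argument, exploiting the fact that the hypotheses of that theorem are encoded through the fixed constants $c_{1}, C_{1}$ and the fixed sequences $\ell_{n}, \delta_{n}, \Delta_{n}$ that also define $\mathcal{P}_{n}$. In particular, the proof of Theorem \ref{theorem:inferenceAlg1} never uses anything about the underlying law $P$ beyond these constants, so its conclusions $\sigma_{n}^{-1} n^{1/2}(\check\alpha - \alpha_{0}) \rightarrow N(0,1)$ and $nL_{n}(\alpha_{0}) \rightarrow \chi^{2}_{1}$ in distribution hold along \emph{any} sequence of laws $\{P_{n}\}$ with $P_{n} \in \mathcal{P}_{n}$. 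This sequence-wise convergence along arbitrary elements of $\mathcal{P}_{n}$ is the engine of the desired uniformity.

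Concretely, suppose the first assertion fails. Then there exist $\varepsilon>0$, a subsequence $n_{k}\to\infty$, and laws $P_{n_{k}}^{*}\in\mathcal{P}_{n_{k}}$ such that
\begin{equation*}
\bigl| \Pr_{P_{n_{k}}^{*}}\{\alpha_{0} \in [\check\alpha \pm \sigma_{n_{k}} n_{k}^{-1/2}\Phi^{-1}(1-\xi/2)]\} - (1-\xi) \bigr| \geq \varepsilon \qquad (k=1,2,\dots).
\end{equation*}
Extending $\{P_{n_{k}}^{*}\}$ to a full sequence $\{P_{n}\}$ by choosing arbitrary $P_{n}\in\mathcal{P}_{n}$ at the remaining indices, Theorem \ref{theorem:inferenceAlg1} gives $\sigma_{n}^{-1}n^{1/2}(\check\alpha-\alpha_{0})\rightarrow N(0,1)$ in distribution under $P_{n}$. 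The event in question is $\{|\sigma_{n}^{-1}n^{1/2}(\check\alpha-\alpha_{0})|\leq\Phi^{-1}(1-\xi/2)\}$, and since the limiting distribution function is continuous, Polya's theorem promotes the pointwise convergence of distribution functions to uniform convergence on $\RR$, so the probability converges to $1-\xi$, contradicting the choice of $n_{k}$. The identical argument applied to $nL_{n}(\alpha_{0})$, whose $\chi^{2}_{1}$ limit is also continuous and whose critical value $q_{1-\xi}$ is a continuity point, yields the second assertion.

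The only ancillary check needed is that the Wald half-width $\sigma_{n} n^{-1/2}\Phi^{-1}(1-\xi/2)$ used in the first display is well-defined and comparable across $P_{n}\in\mathcal{P}_{n}$; this reduces to showing $\sigma_{n}^{2}=1/\{4 f_{\epsilon}^{2}\Ep(v_{\ii}^{2})\}$ is bounded above and away from zero by constants depending only on $c_{1},C_{1}$. That follows directly from Condition \ref{Condition I}(iv), which pins $f_{\epsilon}(0)\in[c_{1},C_{1}]$, and Condition \ref{Condition I}(ii), which gives $\Ep(v_{\ii}^{2})\in[c_{1},C_{1}]$ after taking conditional expectations. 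I do not anticipate any real obstacle: the heavy lifting sits entirely inside Theorem \ref{theorem:inferenceAlg1}, and Corollary \ref{cor:Uniformity} is in essence a translation of a pointwise limit statement into uniform language, made possible because all quantitative ingredients of Condition \ref{Condition I} are already frozen into the definition of $\mathcal{P}_{n}$.
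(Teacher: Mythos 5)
Your proposal is correct and matches the paper's (largely implicit) argument: the paper proves Theorem \ref{theorem:inferenceAlg1} via Theorem \ref{theorem2}, whose proof is explicitly carried out ``under any sequence $P = P_n \in \mathcal{P}_n$,'' and the corollary is then obtained exactly by the standard subsequence/contradiction conversion of sequence-wise convergence in distribution (plus Polya's theorem and continuity of the $N(0,1)$ and $\chi^2_1$ limits) into uniformity over $\mathcal{P}_n$. Your ancillary check on $\sigma_n$ is harmless but not strictly needed, since the coverage events are already expressed in terms of the self-standardized quantities.
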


Corollary  \ref{cor:Uniformity} establishes the second main result of the paper. It highlights the uniform validity of the results, which hold despite the possible imperfect model selection in Steps (i) and (ii).  Condition \ref{Condition I} explicitly characterizes regions of data-generating processes for which the uniformity result holds.  Simulations presented below provide additional evidence that these regions are substantial.  Here we rely on exactly sparse models,
but these results extend to approximately sparse model in what follows.

Both of the proposed algorithms exploit the homoscedasticity of the model (\ref{Eq:direct}) with respect to the error term $\epsilon_i$. The generalization to the heteroscedastic case can be achieved but we need to consider the density-weighted version of the auxiliary equation (\ref{Eq:indirect}) in order to achieve the semiparametric efficiency bound. The analysis of the impact of estimation of weights is delicate and is developed in our working paper ``Robust Inference in High-Dimensional Approximate Sparse Quantile Regression Models" (arXiv:1312.7186).

\subsection{Generalization to many target coefficients}
\label{sec: generalization}

We consider the generalization to the previous model:
\[
y = \sum_{j=1}^{p_1}  d_{j} \alpha_j+ g (u) + \epsilon,   \quad  \epsilon \sim F_{\epsilon},   \quad  F_{\epsilon}(0) =1/2,
\]
where $d,u$ are regressors, and $\epsilon$ is the noise with distribution function $F_{\epsilon}$ that is independent of regressors and has median zero, that is, $F_{\epsilon}(0) =1/2$.   The coefficients
$\alpha_1,\ldots, \alpha_{p_1}$ are now the high-dimensional parameter of interest.

We can rewrite this model as  $p_1$ models of the previous form:
\begin{equation*}
y = \alpha_{j} d_{j} + g_j(z_{j}) + \epsilon,   \quad   d_j= m_j(z_j) + v_{j}, \quad  \Ep (v_j\mid z_j) = 0
\quad    (j =1,\ldots,p_1),
\end{equation*}
where $\alpha_{j}$ is the target coefficient,
\[
g_j(z_{j}) = \sum_{k \neq j}^{p_1} d_{k} \alpha_{k} + g(u), \quad \ m_{j} (z_{j}) = \Ep (d_{j} \mid z_{j}),
\]
and where $z_{j} = (d_{1},\dots,d_{j-1},d_{j+1},\dots,d_{p_{1}},u^{\T})^{\T}$. We would like to estimate and perform inference on each of the $p_1$
coefficients $\alpha_1,\ldots, \alpha_{p_1}$ simultaneously.

Moreover,
we would like to allow regression functions $h_{j} = (g_{j}, m_{j})^{\T}$ to be of infinite dimension, that is, they could be written only as infinite linear combinations of some dictionary  with respect to $z_j$. However, we assume that there are sparse  estimators $\hat h_{j} = (\hat g_{j}, \hat m_{j})^{\T}$ that can estimate $h_{j} = (g_{j}, m_{j})^{\T}$ at sufficiently fast $o(n^{-1/4})$  rates in the mean square error sense,  as stated precisely in Section 3.   Examples of functions $h_j$ that permit such estimation by sparse methods include the standard Sobolev spaces as well as more general rearranged Sobolev spaces \cite{BickelRitovTsybakov2009,  BCW-SqLASSO2}.  Here sparsity of estimators $\hat{g}_{j}$ and $\hat{m}_{j}$ means that they are formed by $O_{P}(s)$-sparse linear combinations chosen from $p$ technical regressors generated from $z_{j}$,  with coefficients estimated from the data.  This framework is  general; in particular it contains as a special case the traditional linear sieve/series framework  for estimation of $h_j$, which uses a small number $s = o(n)$ of predetermined series functions as a dictionary.

Given suitable estimators for $h_{j}  = (g_{j}, m_{j})^{\T}$, we can then identify and estimate each of the target parameters $(\alpha_{j})_{j=1}^{p_{1}}$ via  the empirical version of the moment equations
\[
\Ep[ \psi_{j} \{w, \alpha_{j}, h_{j} (z_{j})\}] = 0  \quad  (j=1,\dots,p_{1}),
\]
where $\psi_{j} (w, \alpha, t ) = \varphi(y - d_{j} \alpha - t_1 ) (d_{j} - t_2)$
and $w = (y,d_{1},\dots,d_{p_{1}},u^{\T})^{\T}$. These equations have the orthogonality property:
\[
[\partial \Ep \{   \psi_{j} (w, \alpha_{j}, t) \mid z_{j} \}/\partial t ]\big |_{t = h_{j}(z_{j})} = 0  \quad  (j=1,\dots,p_{1}).
\]
The resulting estimation problem is subsumed as a {special case}  in the next section.

\section{Inference on many target parameters in Z-problems}

In this section we generalize the previous example to a more general setting, where  $p_1$
target parameters defined via Huber's Z-problems are of interest, with dimension $p_1$ potentially much larger than the sample size.
This framework covers median regression,  its generalization discussed above, and many other semi-parametric models.

The interest lies in $p_{1} = p_{1n}$ real-valued target parameters $\alpha_1,\ldots, \alpha_{p_1}$.
We assume that each $\alpha_{j} \in \A_{j}$,   where each $\A_{j}$ is a non-stochastic bounded closed interval. The true parameter $\alpha_{j}$ is identified as a unique solution of the moment condition:
 \begin{equation}
\label{eq:ivequation}
 \Ep[ \psi_{j} \{ w, \alpha_{j}, h_{j}(z_{j}) \} ] = 0.
 \end{equation}
Here $w$ is a random vector taking values in $\mathcal{W}$, a Borel subset of a Euclidean space, which contains vectors $z_{j} \ (j =1,\ldots,p_1)$ as subvectors, and each $z_{j}$ takes values in $\mathcal{Z}_{j}$; here $z_{j}$ and $z_{j'}$ with $j \neq j'$ may overlap. The vector-valued function $z \mapsto h_{j}(z) = \{h_{jm}(z)\}_{m=1}^{M}$ is a measurable map from $ \mathcal{Z}_{j}$ to $\RR^{M}$,  where $M$ is fixed, and the function $ (w, \alpha, t) \mapsto \psi_{j}(w, \alpha, t)$ is a measurable map from an open neighborhood of $\mathcal{W} \times \A_{j} \times \RR^{M}$ to  $\RR$.  The former map is a possibly infinite-dimensional nuisance parameter.

Suppose that the nuisance function $h_{j} = (h_{jm})_{m=1}^{M}$ admits a sparse estimator $\hat{h}_{j} = (\hat{h}_{jm})_{m=1}^{M}$ of the form
\[
\hat{h}_{jm} (\cdot)= \sum_{k=1}^{p} f_{jmk} (\cdot) \hat{\theta}_{jmk}, \quad \| ( \hat{\theta}_{jmk} )_{k=1}^{p} \|_{0} \leq s \quad (m=1,\dots,M),
\]
where $p = p_{n}$ may be much larger than $n$ while $s = s_{n}$, the sparsity level of $\hat{h}_{j}$, is small compared to $n$, and $f_{j mk}: \mathcal{Z}_{j} \to \RR$ are given approximating functions.

The estimator $\hat \alpha_{j}$ of $\alpha_{j}$ is then constructed as a Z-estimator, which solves
 the sample analogue of the equation (\ref{eq:ivequation}):
\begin{equation}
|\En[ \psi_{j}\{w, \hat \alpha_{j}, \hat h_{j}(z_{j}) \} ] | \leq \inf_{\alpha \in \hat{\A}_{j}} |\En[ \psi\{w, \alpha, \hat h_{j}(z_{j}) \} ] | + \epsilon_n, \label{eq:analog}
\end{equation}
where $\epsilon_n = o(n^{-1/2}b_{n}^{-1})$ is the numerical tolerance parameter and $b_n = \{ \log (e p_1) \}^{1/2}$; $\hat{\A}_{j}$ is a possibly stochastic interval contained in $\A_{j}$ with high probability. Typically, $\hat{\A}_{j} = \A_{j}$ or can be constructed by using a preliminary estimator of $\alpha_{j}$.

In order to achieve robust inference results, we shall need to rely on the condition of orthogonality, or immunity, of the scores
with respect to small perturbations in the value of the nuisance parameters, which we can express in the following condition:
\begin{equation}
\label{eq:orthogonality}
\partial_t  \Ep \{  \psi_{j} (w, \alpha_{j}, t ) \mid z_{j} \} |_{t = h_{j}(z_{j})} = 0,
\end{equation}
where we use the symbol $\partial_t$ to abbreviate  $\partial/\partial t$.   It is important to construct the scores $\psi_{j}$  to have  property (\ref{eq:orthogonality}) or its generalization given in Remark \ref{remark: general orthogonality} below.  Generally, we can construct the scores  $\psi_{j}$ that obey such properties
by projecting some initial non-orthogonal scores  onto the orthogonal complement of the tangent space for the nuisance parameter \cite{vdV-W,vdV,kosorok:book}.  Sometimes the resulting construction generates additional nuisance parameters, for example, the auxiliary regression function  in the case of the median regression problem in Section 2.

In Conditions \ref{condition: SP} and \ref{condition: AS} below,  $\varsigma, n_{0}, c_{1}$, and $C_{1}$ are given positive constants;
$M$ is a fixed positive integer; $\delta_{n} \downarrow 0$ and $\rho_{n} \downarrow 0$ are given sequences of constants. Let $a_n = \max(p_1, p, n, \mathrm{e})$ and $b_n = \{ \log (e p_1) \}^{1/2}$.
\begin{condition}
\label{condition: SP}
For every $n \geq 1$, we observe  independent and identically distributed copies $(w_{i})_{i=1}^n$ of the random vector $w$, whose law is determined by the probability measure $P \in \mathcal{P}_n$. Uniformly in $n \geq n_0, P \in \mathcal{P}_n$, and $j =1,\ldots,p_1$, the following conditions are satisfied:
  (i) the true parameter $\alpha_{j}$ obeys (\ref{eq:ivequation}); $\hat{\A}_{j}$ is a possibly stochastic interval such that with probability $1-\delta_{n}$, $[\alpha_{j} \pm c_1 n^{-1/2}  \log^2 a_{n} ] \subset \hat{\A}_{j} \subset \A_{j}$;
  (ii) for $P$-almost every $z_{j}$, the map $(\alpha,t) \mapsto \Ep\{\psi_{j}(w, \alpha,t) \mid z_{j}\}$ is twice continuously differentiable, and for every $\nu \in \{ \alpha, t_{1},\dots,t_{M} \}$,
$\Ep ( \sup_{\alpha_{j} \in \A_{j}} |\partial_{\nu} \Ep  [ \psi_{j}\{ w, \alpha,h_{j}(z_{j}) \} \mid z_{j}  ]|^2 )\leq C_{1}$; moreover, there exist constants $L_{1n} \geq 1, L_{2n} \geq 1$, and a cube $\mathcal{T}_{j}(z_{j}) = \times_{m=1}^{M} \mathcal{T}_{j m} (z_{j})$ in $\RR^{M}$ with center $h_{j}(z_{j})$ such that for every $\nu, \nu' \in \{ \alpha,t_{1},\dots,t_{M} \}$, $\sup_{ (\alpha,t) \in \A_{j}\times \mathcal{T}_{j}(z_{j})} |\partial_{\nu} \partial_{\nu'}  \Ep\{\psi_{j}(w,\alpha,t) \mid z_{j} \}| \leq  L_{1n}$,  and for every $\alpha,\alpha' \in \A_{j}, t, t' \in \mathcal{T}_{j} (z_{j})$, $
\Ep[  \{ \psi_{j}(w, \alpha,t) - \psi_{j}(w, \alpha',t')\}^2 \mid z_{j}] \leq L_{2n} (| \alpha-\alpha' |^{\varsigma} + \| t - t' \|^{\varsigma});$
  (iii)  the orthogonality condition (\ref{eq:orthogonality}) or its generalization stated in  (\ref{eq: orthogonality general}) below holds;   (iv) the following global and local identifiability conditions hold: $2|\Ep[\psi_{j}\{w, \alpha, h_{j}(z_{j})\}]| \geq  |\Gamma_{j} (\alpha- \alpha_{j})| \wedge c_{1} \text{  for all } \alpha \in \A_{j},$  where
$\Gamma_{j} =  \partial_\alpha \Ep[ \psi_{j} \{w, \alpha_{j}, h_{j}(z_{j})\}]$, and   $|\Gamma_{j}| \geq c_{1}$;    and (v) the second moments of scores are bounded away from zero: $\Ep[\psi^2_{j}\{w, \alpha_{j}, h_{j}(z_{j})\}] \geq c_{1}$.
\end{condition}

Condition \ref{condition: SP} states rather mild assumptions for Z-estimation problems, in particular, allowing for non-smooth scores $\psi_{j}$ such as those arising in median regression. They are analogous to assumptions imposed in the setting with $p = o(n)$, for example, in \cite{He:Shao}.
The following condition uses a notion of pointwise measurable classes of functions \cite{vdV-W}.

\begin{condition}
\label{condition: AS}
Uniformly in $n \geq n_0, P \in \mathcal{P}_n$, and $j =1,\ldots,p_1$, the following conditions are satisfied:
(i) the nuisance function $h_{j} = (h_{j m})_{m=1}^{M}$ has an estimator $\hat h_{j} = ( \hat h_{j m})_{m=1}^M$ with good sparsity and rate properties, namely, with probability $1-\delta_n$, $\hat h_{j} \in \mathcal{H}_{j}$, where $\mathcal{H}_{j} = \times_{m=1}^{M} \mathcal{H}_{j m}$ and
 each $\mathcal{H}_{j m}$  is the class of functions $\tilde h_{jm}: \mathcal{Z}_{j} \to \RR$ of the form $\tilde h_{jm}(\cdot) = \sum_{k=1}^p f_{j m k}(\cdot) \theta_{mk}$ such that $\| (\theta_{mk})_{k=1}^{p} \|_{0} \leq s$, $\tilde h_{jm}(z) \in \mathcal{T}_{j m} (z) $ for all  $z \in \mathcal{Z}_{j}$, and  $\Ep[ \{ \tilde h_{jm}(z_{j}) - h_{j m}(z_{j}) \}^{2} ] \leq C_{1} s (\log a_{n})/n$, where $s =s_n \geq 1$ is the sparsity level, obeying (iv) ahead;
(ii) the class of functions $\F_{j} = \{ w \mapsto \psi_{j}\{w, \alpha, \tilde h(z_{j})\} :  \alpha \in \A_{j}, \tilde h \in \mathcal{H}_{j} \cup \{h_{j}\} \}$ is pointwise measurable and obeys the entropy condition
$\ent (\varepsilon, \F_{j}) \leq C_{1}Ms \log (a_{n}/\varepsilon)$ for all $0 < \varepsilon \leq 1$;
(iii) the class $\F_{j}$ has measurable envelope $F_{j} \geq \sup_{f \in \F_{j}} |f|$, such that $F = \max_{j =1,\ldots,p_1} F_{j}$ obeys $\Ep \{ F^{q}(w) \} \leq C_{1}$ for some $q \geq 4$; and (iv) the dimensions $p_1, p$, and ${s}$ obey the growth conditions:
\[
n^{-1/2} ({s} \log a_n )^{1/2}  \leq \rho_n, \ \  \rho_n^{\varsigma/2} ( L_{2n} s \log a_n )^{1/2} + n^{-1/2+1/q} s\log a_n +  n^{1/2} L_{1n} \rho_{n}^{2}  \leq \delta_n b_n^{-1}.
\]
\end{condition}

 Condition \ref{condition: AS} (i) requires reasonable behavior of sparse estimators $\hat h_{j}$. In the previous section, this type of behavior occurred in the cases where  $h_{j}$ consisted of a part of a median regression function and a conditional expectation function in an auxiliary equation. There are many conditions in the literature that imply these conditions from primitive assumptions. For  the case with $q= \infty$, Condition  \ref{condition: AS} (vi) implies the following restrictions on the sparsity indices: $(s^2 \log^3 a_n)/n \to 0$ for the case where $\varsigma =2$, which typically happens when $\psi_{j}$ is smooth, and $(s^3 \log^5 a_n)/n \to 0$ for the case where $\varsigma = 1$, which typically happens when $\psi_{j}$ is non-smooth.  Condition \ref{condition: AS} (iii) bounds the moments of the envelopes,
 and it can be relaxed to a bound that grows with $n$, with an appropriate strengthening of the growth conditions stated in (iv).

 Condition \ref{condition: AS} (ii)  implicitly requires $\psi_{j}$ not to increase entropy too much; it holds, for example, when $\psi_{j}$ is a monotone transformation, as in the case of median regression, or a Lipschitz transformation; see \cite{vdV-W}.   The entropy bound is formulated in terms of the upper bound $s$ on the sparsity of the estimators and $p$ the dimension of the overall approximating model appearing via $a_n$.  In principle
 our main result below applies to non-sparse estimators as well, as long as the entropy bound specified in  Condition \ref{condition: AS} (ii) holds, with index $(s,p)$ interpreted as measures of effective complexity of the relevant function classes.

Recall that $\Gamma_{j} =  \partial_\alpha \Ep[ \psi_{j} \{w, \alpha_{j}, h_{j}(z_{j})\}]$; see Condition \ref{condition: SP} (iii).  Define
\[
\sigma^2_{j} = \Ep[\Gamma_{j}^{-2} \psi^2_{j}\{w, \alpha_{j}, h_{j} (z_{j})\}] ,  \quad  \phi_{j} (w)  = - \sigma^{-1}_{j} \Gamma_{j}^{-1} \psi_{j}\{w, \alpha_{j}, h_{j} (z_{j})\} \quad (j =1,\ldots,p_1).
\]
The following is the main theorem of this section; its proof is found in Appendix \ref{sec: proof of Theorem 2}.

\begin{theorem}
 \label{theorem2}
Under Conditions \ref{condition: SP} and \ref{condition: AS}, uniformly in $P \in \mathcal{P}_n$, with probability $1-o(1)$,
\[
\max_{j =1,\ldots,p_1} \left |n^{1/2} \sigma_{j}^{-1} (\hat \alpha_{j} - \alpha_{j}) - n^{-1/2} \sum_{i=1}^n \phi_{j} (w_{i})\right |  = o(b_n^{-1}), \quad n \to \infty.
\]
\end{theorem}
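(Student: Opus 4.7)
\textbf{Proof plan for Theorem \ref{theorem2}.}

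The plan is to derive the asymptotic linear (Bahadur) representation for a single $\hat\alpha_j$ with quantitative remainder bounds depending on the sparsity parameters, and then to take the maximum over $j \le p_1$ via a union/maximal inequality that tolerates $p_1 \gg n$. Fix $j$ and define the centered empirical criterion $\Psi_j(\alpha, h) = \En[\psi_j\{w,\alpha,h(z_j)\}]$ and its population version $\bar\Psi_j(\alpha,h) = \Ep[\psi_j\{w,\alpha,h(z_j)\}]$. Starting from the approximate zero $|\Psi_j(\hat\alpha_j,\hat h_j)| \le \inf_{\alpha \in \hat\A_j}|\Psi_j(\alpha,\hat h_j)| + \epsilon_n$ and Condition \ref{condition: SP}(iv), a preliminary step shows consistency $|\hat\alpha_j - \alpha_j| = o_P(1)$ uniformly in $j$ and $P \in \mathcal{P}_n$, and then a rate bound $|\hat\alpha_j - \alpha_j| = O_P(\rho_n)$, obtained by combining the global identification inequality with a uniform bound $\sup_{\alpha \in \A_j, \tilde h \in \mathcal{H}_j \cup \{h_j\}}|\Psi_j(\alpha,\tilde h) - \bar\Psi_j(\alpha,\tilde h)| = O_P(\rho_n)$ coming from the entropy bound in Condition \ref{condition: AS}(ii)(iii) and a standard maximal inequality for VC-type classes.

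Next I would perform the core decomposition
\[
\Psi_j(\hat\alpha_j,\hat h_j) = \Psi_j(\alpha_j,h_j) + \{\bar\Psi_j(\hat\alpha_j,\hat h_j) - \bar\Psi_j(\alpha_j,h_j)\} + R_{n,j},
\]
where $R_{n,j} = (\En - \Ep)[\psi_j\{w,\hat\alpha_j,\hat h_j(z_j)\} - \psi_j\{w,\alpha_j,h_j(z_j)\}]$ is the stochastic equicontinuity term. For the drift term I would Taylor expand $\bar\Psi_j$ in $(\alpha,t)$ around $(\alpha_j,h_j(z_j))$: the first-order $\alpha$ term yields $\Gamma_j(\hat\alpha_j - \alpha_j)$, the first-order $t$ term vanishes by the orthogonality condition (\ref{eq:orthogonality}), and the second-order remainder is controlled by $L_{1n}$ and yields $O_P(L_{1n}\{|\hat\alpha_j - \alpha_j|^2 + \|\hat h_j - h_j\|_{P,2}^2\}) = O_P(L_{1n}\rho_n^2) = o(n^{-1/2}b_n^{-1})$ by Condition \ref{condition: AS}(iv). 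The empirical process remainder $R_{n,j}$ is bounded by a localized maximal inequality over the sub-class of $\F_j$ with index restricted to the shrinking $\sqrt{\rho_n^\varsigma}$-ball, using the $L^2$ increments bound in Condition \ref{condition: SP}(ii) and entropy in Condition \ref{condition: AS}(ii); this gives $|R_{n,j}| = O_P(n^{-1/2}\rho_n^{\varsigma/2}(L_{2n}s\log a_n)^{1/2}) = o_P(n^{-1/2}b_n^{-1})$, again by the growth condition in Condition \ref{condition: AS}(iv).

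Solving for $\hat\alpha_j - \alpha_j$, using $|\Gamma_j| \ge c_1$ and the tolerance $\epsilon_n = o(n^{-1/2}b_n^{-1})$, produces the pointwise Bahadur representation
\[
n^{1/2}\sigma_j^{-1}(\hat\alpha_j - \alpha_j) = -\sigma_j^{-1}\Gamma_j^{-1} n^{1/2}\En[\psi_j\{w,\alpha_j,h_j(z_j)\}] + r_{n,j} = n^{-1/2}\sum_{i=1}^n \phi_j(w_i) + r_{n,j},
\]
with pointwise $r_{n,j} = o_P(b_n^{-1})$ under fixed $P$. The final step is to upgrade this to a uniform-in-$j$ statement: each summand in the expansion is already controlled pointwise with an explicit $b_n^{-1}$-type bound, so it suffices to verify that the $O_P$ bounds hold simultaneously over $j \le p_1$. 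This is precisely where the entropy conditions play a second role: rather than applying the maximal inequality separately for each $j$, I would apply it once to the union class $\cup_{j \le p_1}\F_j$, whose entropy inherits an extra $\log p_1$ factor absorbed into $\log a_n$. The single uniform application then simultaneously controls $\max_j R_{n,j}$ and $\max_j$ of the rate remainder.

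The main obstacle is the uniformity over $j \le p_1$ with $p_1 \gg n$: one has to ensure that the quadratic bias remainder and, more delicately, the stochastic equicontinuity remainder $R_{n,j}$ are of order $o(b_n^{-1}n^{-1/2})$ simultaneously in $j$, not just for a single $j$. This is exactly what the careful balance between $s$, $\log a_n$, $L_{1n}$, $L_{2n}$, $q$, and $\varsigma$ in Condition \ref{condition: AS}(iv) is designed to secure, and the appropriate tool is a modulus-of-continuity type maximal inequality for VC-type classes with shrinking envelopes, applied to the union class $\cup_j\F_j$.
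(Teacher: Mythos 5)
Your overall strategy is the paper's: the same decomposition of $\En[\psi_j\{w,\hat\alpha_j,\hat h_j(z_j)\}]$ into the leading term, a population drift handled by a second-order Taylor expansion in which the first-order $t$-derivative vanishes by orthogonality and the quadratic remainder is bounded by $L_{1n}\rho_n^2$, and a stochastic equicontinuity term controlled by a localized maximal inequality with variance proxy $L_{2n}\rho_n^{\varsigma}$; the uniformity in $j\le p_1$ is likewise obtained exactly as in the paper, by applying the maximal inequality once to the union class over $j$, whose entropy carries the $\log p_1$ factor inside $\log a_n$. The consistency-and-rate step via the global identification condition matches the paper's Step 3.

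There is, however, one genuine gap. To ``solve'' the expansion for $\hat\alpha_j-\alpha_j$ you need the left-hand side $\En[\psi_j\{w,\hat\alpha_j,\hat h_j(z_j)\}]$ to be $o(n^{-1/2}b_n^{-1})$ uniformly in $j$, but the defining property of $\hat\alpha_j$ only bounds it by $\inf_{\alpha\in\hat{\A}_j}|\En[\psi_j\{w,\alpha,\hat h_j(z_j)\}]|+\epsilon_n$, and nothing in your argument shows that this infimum is itself $o(n^{-1/2}b_n^{-1})$; for non-smooth scores the empirical moment is a step function in $\alpha$ with no exact zero, so this is not automatic. The paper devotes its Step 2 to precisely this point: it defines $\alpha_j^*=\alpha_j-\Gamma_j^{-1}\En[\psi_j\{w,\alpha_j,h_j(z_j)\}]$, shows via a maximal inequality over the finite class $\{w\mapsto\psi_j\{w,\alpha_j,h_j(z_j)\}: j\le p_1\}$ that $\max_j|\alpha_j^*-\alpha_j|\le Cn^{-1/2}\log a_n$, invokes the part of Condition \ref{condition: SP}(i) requiring $[\alpha_j\pm c_1 n^{-1/2}\log^2 a_n]\subset\hat{\A}_j$ to conclude $\alpha_j^*\in\hat{\A}_j$ with probability $1-o(1)$, and then applies the Step-1 expansion at $\tilde\alpha_j=\alpha_j^*$, where the linear term cancels the leading term by construction, yielding $\inf_{\alpha\in\hat{\A}_j}|\En[\psi_j\{w,\alpha,\hat h_j(z_j)\}]|\le|\En[\psi_j\{w,\alpha_j^*,\hat h_j(z_j)\}]|=o(n^{-1/2}b_n^{-1})$. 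Your proposal never uses this part of Condition \ref{condition: SP}(i), which is the telltale sign of the omission. The rest of your argument is correct and would go through once this step is supplied.
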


An immediate implication is a corollary on the  asymptotic normality uniform in $P \in \mathcal{P}_n$ and $j =1,\ldots,p_1$, which follows from  Lyapunov's central limit theorem for triangular arrays.
\begin{corollary}
Under the conditions of Theorem \ref{theorem2},
\[
\max_{j =1,\ldots,p_1} \sup_{P \in \mathcal{P}_n}  \sup_{t \in \RR}\Big |\Pr_P \left \{ n^{1/2}\sigma_{j}^{-1}(\hat \alpha_{j} - \alpha_{j})  \leq t \right \} - \Phi (t)   \Big |  = o(1), \quad n \to \infty.
\]
This implies, provided $\max_{j =1,\ldots,p_1}|\hat \sigma_{j} - \sigma_{j}| = o_P(1)$ uniformly in  $P \in \mathcal{P}_n$, that
\begin{equation*}
\max_{j =1,\ldots,p_1} \sup_{P \in \mathcal{P}_n}  \left |\Pr_P \left \{ \alpha_{j}  \in [\hat \alpha_{j} \pm \hat \sigma_{j} n^{-1/2} \Phi^{-1}(1-\xi/2) ] \right \} - (1-\xi) \right |= o(1),  \quad n \to \infty.
\end{equation*}
\end{corollary}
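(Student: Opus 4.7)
The plan is to combine the uniform Bahadur representation from Theorem \ref{theorem2} with a Berry--Esseen bound for the leading linear sum, and then transfer the normal approximation from the sum to the estimator uniformly in $j$ and $P$. The second conclusion will follow by a uniform Slutsky-type argument using the assumed consistency of $\hat{\sigma}_j$.

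\emph{Step 1 (linearization).} Theorem \ref{theorem2} yields the decomposition $n^{1/2}\sigma_{j}^{-1}(\hat\alpha_{j}-\alpha_{j}) = S_{jn} + R_{jn}$ with $S_{jn} = n^{-1/2}\sum_{i=1}^{n} \phi_{j}(w_{i})$ and $\max_{j\leq p_{1}} |R_{jn}| = o_{P}(b_{n}^{-1})$ uniformly in $P \in \mathcal{P}_{n}$. By the moment condition (\ref{eq:ivequation}), $E_{P}[\phi_{j}(w)]=0$, and by construction $\mathrm{Var}_{P}(\phi_{j}(w))=1$. Conditions \ref{condition: SP}(iv)--(v) give $|\Gamma_{j}| \geq c_{1}$ and $E_{P}[\psi_{j}^{2}] \geq c_{1}$, while Condition \ref{condition: SP}(ii) provides an upper bound on $|\Gamma_{j}|$ and the envelope bound in Condition \ref{condition: AS}(iii) with $q \geq 4$ controls $E_{P}[|\psi_{j}|^{3}]$. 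Thus $\sigma_{j}$ lies in a fixed compact subinterval of $(0,\infty)$ and $\rho_{*}:=\sup_{n,P,j} E_{P}[|\phi_{j}(w)|^{3}] < \infty$.

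\emph{Step 2 (uniform Berry--Esseen and smoothing).} Applied pointwise in $(j,P)$, the classical Berry--Esseen theorem yields $\sup_{t}|P_{P}(S_{jn}\leq t)-\Phi(t)| \leq C \rho_{*} n^{-1/2}$. For any $\epsilon>0$,
\[
\sup_{t\in \RR}\bigl|P_{P}(S_{jn}+R_{jn}\leq t)-\Phi(t)\bigr| \leq \sup_{t}\bigl|P_{P}(S_{jn}\leq t)-\Phi(t)\bigr| + \sup_{t}|\Phi(t+\epsilon)-\Phi(t)| + P_{P}(|R_{jn}|>\epsilon),
\]
and $\sup_{t}|\Phi(t+\epsilon)-\Phi(t)| \leq \epsilon/\sqrt{2\pi}$ by anti-concentration of the standard normal. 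Choosing a sequence $\epsilon_{n}\downarrow 0$ slowly enough that $\sup_{P\in\mathcal{P}_{n}} P_{P}(\max_{j}|R_{jn}|>\epsilon_{n}) = o(1)$, which is possible by Step 1, and taking $\max_{j}$ and $\sup_{P}$ delivers the first displayed conclusion.

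\emph{Step 3 (confidence intervals).} Write
$n^{1/2}(\hat\alpha_{j}-\alpha_{j})/\hat\sigma_{j} = (\sigma_{j}/\hat\sigma_{j})\, n^{1/2}\sigma_{j}^{-1}(\hat\alpha_{j}-\alpha_{j})$. The coverage probability equals $P_{P}(|T_{jn}|\leq \Phi^{-1}(1-\xi/2))$ with $T_{jn}:=(\sigma_{j}/\hat\sigma_{j}) Z_{jn}$ and $Z_{jn}:=n^{1/2}\sigma_{j}^{-1}(\hat\alpha_{j}-\alpha_{j})$. The assumption $\max_{j}|\hat\sigma_{j}-\sigma_{j}|=o_{P}(1)$ uniformly in $P$, together with the compactness of the range of $\sigma_{j}$ from Step 1, gives $\max_{j}|\sigma_{j}/\hat\sigma_{j}-1| = o_{P}(1)$ uniformly. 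Writing $T_{jn}=Z_{jn}+(\sigma_{j}/\hat\sigma_{j}-1)Z_{jn}$ and using the first conclusion plus anti-concentration of $\Phi$ at the fixed quantile $\Phi^{-1}(1-\xi/2)$, one concludes $\max_{j}\sup_{P}|P_{P}(|T_{jn}|\leq\Phi^{-1}(1-\xi/2)) - (1-\xi)| = o(1)$, which is the second assertion.

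The substantive mathematics is already contained in Theorem \ref{theorem2}; the main technical care lies in Step 2, where the event-level remainder bound and the Berry--Esseen rate must both be made uniform in $j$ and $P$, so that a single sequence $\epsilon_{n}$ controls all errors simultaneously. Everything else is standard bookkeeping on anti-concentration and Slutsky-type arguments.
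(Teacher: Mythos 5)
Your proof is correct and follows essentially the same route as the paper, which gives no details beyond asserting that the corollary follows from Theorem \ref{theorem2} together with Lyapunov's central limit theorem for triangular arrays. Your only deviation is to replace the Lyapunov CLT by a Berry--Esseen bound for the linear term, which if anything makes the required uniformity in $j$ and $P \in \mathcal{P}_n$ more transparent; the linearization, the anti-concentration treatment of the $o(b_n^{-1})$ remainder, and the Slutsky step for the studentized interval are exactly the intended argument.
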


This result leads to marginal confidence intervals for $\alpha_{j}$, and shows that they
are valid uniformly in $P \in \mathcal{P}_{n}$ and $j =1,\ldots,p_1$.

Another useful implication is the {high-dimensional} central limit theorem
uniformly over rectangles in $\RR^{p_1}$,  provided that $(\log p_1)^7 = o(n)$, which follows from
Corollary 2.1 in \cite{CCK:AoS}.  Let $\mathcal{N} = (\mathcal{N}_{j})_{j =1}^p$
be a normal random vector  in $\RR^{p_{1}}$ with mean zero and covariance matrix $[\Ep \{ \phi_{j} (w)  \phi_{j'} (w)\}]_{j,j'=1}^{p_1}$.
Let $\mathcal{R}$ be a  collection of rectangles $R$ in $\RR^{p_1}$ of the form
\[
R=\left \{ z \in \RR^{p_1}:   \max_{j \in A} z_{j} \leq t,
\max_{j \in B} (- z_{j}) \leq t \right \}  \quad  ( t \in \RR , A, B \subset \{1,\dots, p_{1}\} ).
\]
For example, when $A = B = \{ 1,\ldots, p_{1}\}$, $R= \{z \in \RR^{p_{1}} : \max_{j=1, \ldots, p_{1}} |z_{j}|  \leq t \}$.

\begin{corollary}
\label{cor: high dim CLT}
Under the conditions of Theorem \ref{theorem2}, provided that $(\log p_1)^7 = o(n)$,
\[
\sup_{P \in \mathcal{P}_n} \sup_{R \in \mathcal{R}}\left |\Pr_P \left [ n^{1/2} \{ \sigma_{j}^{-1} (\hat \alpha_{j} - \alpha_{j}) \}_{j =1}^{p_{1}}  \in R \right] -  \Pr_P (\mathcal{N} \in R)  \right |  = o(1), \quad n \to \infty.
\]
This implies, in particular, that for $c_{1-\xi}= (1-\xi)$-quantile of $\max_{j =1,\ldots,p_1} |\mathcal{N}_{j}|$,
\[
\sup_{P \in \mathcal{P}_n}  \left |\Pr_P \left ( \alpha_{j}  \in [\hat \alpha_{j} \pm c_{1-\xi} \sigma_{j} n^{-1/2} ], \ j =1,\ldots,p_1  \right ) - (1-\xi)  \right | = o(1), \quad n \to \infty.
\]
\end{corollary}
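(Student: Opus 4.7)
The plan is to combine the Bahadur representation from Theorem~\ref{theorem2} with the high-dimensional central limit theorem for sums of independent random vectors over the class of rectangles $\mathcal{R}$, and then to use Gaussian anti-concentration to absorb the Bahadur remainder. Write $\hat T = (\hat T_{j})_{j=1}^{p_{1}}$ with $\hat T_{j} = n^{1/2}\sigma_{j}^{-1}(\hat\alpha_{j}-\alpha_{j})$ and $S = (S_{j})_{j=1}^{p_1}$ with $S_{j} = n^{-1/2}\sum_{i=1}^{n} \phi_{j}(w_{i})$.  By Theorem~\ref{theorem2}, there is an event $\mathcal{E}_n$ with $\Pr_P(\mathcal{E}_n) = 1-o(1)$ uniformly in $P\in\mathcal{P}_n$ on which $\| \hat T - S \|_\infty \leq r_{n}$ with $r_n = o(b_n^{-1}) = o\{1/\sqrt{\log(ep_1)}\}$.

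Next I would apply Corollary~2.1 of \cite{CCK:AoS} to $S$. The summands $\phi_{j}(w_{i})$ are mean-zero by (\ref{eq:ivequation}), their variances equal one by construction, and from Condition~\ref{condition: SP}(iv)(v) together with Condition~\ref{condition: AS}(iii) the functions $\phi_{j}$ inherit $q$-th moment bounds (for some $q\geq 4$) that are uniform in $j$ and $P$. Together with the growth restriction $(\log p_1)^{7} = o(n)$, these verify the hypotheses of the Gaussian approximation result, yielding
\[
\sup_{P \in \mathcal{P}_n} \sup_{R \in \mathcal{R}} \left| \Pr_P(S \in R) - \Pr_P(\mathcal{N} \in R) \right| = o(1), \quad n \to \infty,
\]
where $\mathcal{N}$ is the centered Gaussian with covariance $[\Ep\{\phi_{j}(w)\phi_{j'}(w)\}]_{j,j'=1}^{p_1}$.

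To upgrade this approximation from $S$ to $\hat T$, I would use the Gaussian anti-concentration inequality (Nazarov's inequality, invoked in \cite{CCK:AoS}): since $\Ep \phi_{j}(w)^{2} = 1$ for every $j$, for any $R \in \mathcal{R}$ and any $\varepsilon > 0$,
\[
\Pr_P\{ \mathcal{N} \in R^{\varepsilon} \setminus R^{-\varepsilon} \} \leq C \varepsilon \sqrt{\log p_{1}},
\]
where $R^{\pm \varepsilon}$ denote the $\varepsilon$-enlargement/shrinkage of $R$ in the $\ell_{\infty}$ metric. On $\mathcal{E}_n$, $\{\hat T \in R\} \subset \{S \in R^{r_n}\}$ and $\{S \in R^{-r_n}\} \subset \{\hat T \in R\}$, so by the Gaussian approximation for $S$ and the anti-concentration bound applied with $\varepsilon = r_n$, the difference $|\Pr_P(\hat T \in R) - \Pr_P(\mathcal{N} \in R)|$ is at most $\Pr_P(\mathcal{E}_n^{c}) + o(1) + C r_{n}\sqrt{\log p_{1}} = o(1)$ uniformly in $R \in \mathcal{R}$ and $P \in \mathcal{P}_{n}$, since $r_{n}\sqrt{\log p_{1}} = o(1)$ by the choice of $b_{n}$.

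The last display of the corollary follows by specialization: the set $\{ z \in \RR^{p_1}: \max_{j} |z_{j}| \leq c_{1-\xi}\}$ belongs to $\mathcal{R}$ (take $A = B = \{1,\dots,p_{1}\}$ and $t = c_{1-\xi}$), and by definition $\Pr_P(\max_j |\mathcal{N}_j| \leq c_{1-\xi}) = 1-\xi$. The principal obstacle is bookkeeping the uniformity in $P$ inside the CCK Gaussian approximation, that is, checking that the constants governing the summand moments, the envelope $F$, and the lower bound on variances are uniform over $\mathcal{P}_n$; once this is secured, the anti-concentration argument is routine because the sharp rate $b_n^{-1} = 1/\sqrt{\log(ep_1)}$ of the Bahadur remainder is precisely matched to the anti-concentration factor $\sqrt{\log p_1}$.
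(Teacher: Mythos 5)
Your proposal is correct and follows exactly the route the paper intends: the paper derives this corollary by combining the uniform Bahadur representation of Theorem \ref{theorem2} with Corollary 2.1 of \cite{CCK:AoS}, and the anti-concentration step you spell out (absorbing the $o(b_n^{-1})$ remainder via Nazarov's inequality, noting that $b_n^{-1}\sqrt{\log p_1}=O(1)$ is precisely why $b_n=\{\log(ep_1)\}^{1/2}$ was chosen) is the standard and intended way to make that citation rigorous. No gaps.
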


This result leads to simultaneous confidence bands for $(\alpha_{j})_{j=1}^{p_{1}}$ that are valid uniformly in $P \in \mathcal{P}_n$.
Moreover, Corollary \ref{cor: high dim CLT}
is immediately useful for testing multiple hypotheses about $(\alpha_{j})_{j=1}^{p_{1}}$
via the step-down methods of \cite{romano:wolf} which control the family-wise error  rate; see \cite{CCK:AoS} for further discussion of multiple testing with $p_1 \gg n$.

In practice the distribution of $\mathcal{N}$ is unknown, since its covariance matrix is unknown, but it can be approximated by the  Gaussian multiplier bootstrap, which generates a vector
\begin{equation}
\label{define: Nstar}
\mathcal{N}^{*} = (\mathcal{N}^{*}_{j})_{j=1}^{p_{1}} = \left \{ \frac{1}{n^{1/2}} \sum_{i=1}^n \xi_i \hat \phi_{j} (w_i) \right \}_{j =1}^{p_{1}},
\end{equation}
where $(\xi_i)_{i=1}^n$ are independent standard normal random variables, independent of the data $(w_i)_{i=1}^n$,  and $\hat \phi_{j}$ are any estimators
 of $\phi_{j}$, such that
$$\max_{j, j' \in \{1,\ldots, p_{1}\}} |\En  \{ \hat \phi_{j}(w)\hat \phi_{j'}(w) \} - \En \{  \phi_{j}(w)  \phi_{j'}(w) \} | = o_P(b_n^{-4})$$ uniformly in $P \in \mathcal{P}_n$.  Let $\hat \sigma^2_{j} = \En  \{ \hat\Gamma_j^{-2}\psi_{j}^2\{w, \hat \alpha_{j}, \hat h_{j}(z_{j}) \}$ where $\hat \Gamma_{j}$ is an estimator of $\Gamma_j$.
Theorem 3.2 in \cite{CCK:AoS} then implies the following result.

\begin{corollary}
Under the conditions of Theorem \ref{theorem2}, provided that $(\log p_1)^7 = o(n)$, with probability $1-o(1)$ uniformly in $P \in \mathcal{P}_n$,
\[
\sup_{P \in \mathcal{P}_n} \sup_{R \in \mathcal{R}} \left | \Pr_P \{ \mathcal{N}^* \in R\mid (w_i)_{i=1}^n \}  -\Pr_P (\mathcal{N} \in R)    \right |  = o(1).
\]
This implies, in particular, that for  $\hat c_{1-\xi}=(1-\xi)$-conditional quantile of $\max_{j =1,\ldots,p_1} |\mathcal{N}^*_{j}|$,
\[
\sup_{P \in \mathcal{P}_n}  \left |\Pr_P \left ( \alpha_{j}  \in [\hat \alpha_{j} \pm \hat c_{1-\xi}  \hat \sigma_{j}  n^{-1/2}],  \ j =1,\ldots,p_1 \right ) - (1-\xi) \right | = o(1).
\]
\end{corollary}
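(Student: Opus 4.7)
The first assertion is essentially a direct invocation of the Gaussian multiplier bootstrap comparison result for high-dimensional sums over the class of rectangles $\mathcal{R}$, namely Theorem~3.2 in \cite{CCK:AoS}. The plan is to package the centered score approximations $\phi_j(w)$ as the ``target'' independent vectors and the estimated versions $\hat\phi_j(w)$ as their plug-in analogues, and then verify that the hypotheses of that theorem hold under our Conditions~\ref{condition: SP} and \ref{condition: AS}. Concretely, the moment conditions on $\phi_{j}$ follow because $|\sigma_{j}^{-1}\Gamma_{j}^{-1}|$ is bounded above by $c_1^{-1}$ (from Condition~\ref{condition: SP}(iv)--(v)) and $\psi_{j}\{w,\alpha_{j},h_{j}(z_{j})\}$ is dominated by the envelope $F(w)$ having $q$-th moments bounded by $C_{1}$ (Condition~\ref{condition: AS}(iii)); combined with $(\log p_{1})^{7} = o(n)$ this supplies every moment growth assumption of \cite{CCK:AoS}.

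The key technical input that must be checked is that the bootstrap covariance matrix $[\En\{\hat\phi_{j}(w)\hat\phi_{j'}(w)\}]_{j,j'=1}^{p_{1}}$ approximates $[\Ep\{\phi_{j}(w)\phi_{j'}(w)\}]_{j,j'=1}^{p_{1}}$ uniformly in entries at rate $o_P((\log p_{1})^{-2})$, which is the rate demanded by Theorem~3.2 of \cite{CCK:AoS} and is satisfied (with room to spare) by our $b_{n}^{-4} = (\log ep_{1})^{-2}$ hypothesis on $\hat\phi_{j}$. The triangle inequality splits this into (a) $\max_{j,j'}|\En\{\hat\phi_{j}\hat\phi_{j'}\} - \En\{\phi_{j}\phi_{j'}\}| = o_P(b_n^{-4})$, which is assumed, and (b) $\max_{j,j'}|\En\{\phi_{j}\phi_{j'}\} - \Ep\{\phi_{j}\phi_{j'}\}| = o_P(b_n^{-4})$, which follows from a standard maximal inequality applied to the bounded-$q$th-moment empirical process indexed by the $p_{1}^{2}$ products; under $(\log p_{1})^{7} = o(n)$ and $q \geq 4$ the bound is comfortably tighter than $b_n^{-4}$.

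For the second, coverage-type implication, the plan is to chain together three approximations. First, by Theorem~\ref{theorem2} and the preceding Corollary~\ref{cor: high dim CLT}, $n^{1/2}\{\sigma_{j}^{-1}(\hat\alpha_{j}-\alpha_{j})\}_{j=1}^{p_{1}}$ is Gaussian-approximable over $\mathcal{R}$ by $\mathcal{N}$; second, by the first assertion just established, the law of $\mathcal{N}$ is approximable by the conditional law of $\mathcal{N}^{*}$; third, the replacement of $\sigma_{j}$ by $\hat\sigma_{j}$ on the event $\max_{j}|\hat\sigma_{j}/\sigma_{j}-1| = o_{P}(1)$ distorts the rectangles only by a factor $1+o_{P}(1)$, and this distortion is absorbed via the Gaussian anti-concentration inequality for maxima (Nazarov's inequality as used in \cite{CCK:AoS}). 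Combining these with the choice of $\hat c_{1-\xi}$ as the conditional $(1-\xi)$-quantile of $\max_{j}|\mathcal{N}^{*}_{j}|$, applied to the symmetric rectangle $R=\{z : \max_{j}|z_{j}| \leq \hat c_{1-\xi}\}$, yields uniform validity of the simultaneous confidence bands.

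The main obstacle is verifying that the covariance approximation holds at the required $o_P(b_n^{-4})$ rate in sup-norm over all $p_{1}^{2}$ pairs; the assumed rate for the $\hat\phi_j$ handles part (a) by hypothesis, but part (b) requires a high-dimensional maximal inequality for empirical products, and the interplay between the envelope condition $\Ep\{F^{q}\} \leq C_1$ and the growth restriction $(\log p_{1})^{7} = o(n)$ must be controlled carefully; fortunately this is routine given $q \geq 4$ and Lemma~E.1 of \cite{CCK:AoS}. The rest is essentially bookkeeping plus the anti-concentration step needed to pass from studentization by $\sigma_j$ to studentization by $\hat\sigma_j$ without losing the uniform-in-$P$ control.
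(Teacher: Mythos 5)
Your proposal is correct and follows essentially the same route as the paper, whose entire proof of this corollary is the citation of Theorem 3.2 in \cite{CCK:AoS} together with the assumed covariance-consistency condition $\max_{j,j'}|\En\{\hat\phi_j\hat\phi_{j'}\}-\En\{\phi_j\phi_{j'}\}|=o_P(b_n^{-4})$. The extra verifications you supply (moment bounds via $|\Gamma_j|\geq c_1$ and the envelope condition, the empirical-versus-population covariance comparison, and the anti-concentration step for replacing $\sigma_j$ by $\hat\sigma_j$) are exactly the bookkeeping that the cited theorem and the preceding corollaries absorb.
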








\begin{remark}\label{remark: general orthogonality}
The proof of Theorem \ref{theorem2} shows that the orthogonality condition (\ref{eq:orthogonality}) can be replaced by a more general orthogonality condition:
\begin{equation}\label{eq: orthogonality general}
E [\eta(z_j)^{\T} \{ \tilde h_j(z_j) - h_j(z_j)  \}] = 0,  \quad (\tilde h_j \in \mathcal{H}_j, \ j=1, \ldots ,p_1),
\end{equation}
where $\eta(z_j) = \partial_t  \Ep \{  \psi_{j} (w, \alpha_{j}, t ) \mid z_{j} \} |_{t = h_{j}(z_{j})}$,
or even more general condition of approximate orthogonality: $E [\eta(z_j)^{\T} \{ \tilde h_j(z_j) - h_j(z_j)  \}] = o(n^{-1/2} b_n^{-1})$ uniformly in $\tilde h_j \in \mathcal{H}_j$ and $j=1, \ldots ,p_1$.
The generalization (\ref{eq: orthogonality general}) has a number of benefits, which could be well illustrated
by the median regression model of Section \ref{sec: intro}, where the conditional moment restriction $E(v_i \mid x_i) = 0$
could be now replaced by the unconditional one $E(v_i x_i) =0$, which allows for more general forms of data-generating processes.   \end{remark}

\section{Monte Carlo Experiments}
We consider the regression model
\begin{equation}\label{MC}
y_i = d_i \alpha_0 + x_i^{\T} (c_y\theta_0) + \epsilon_i, \quad  d_i = x_i^{\T} (c_d\theta_0) + v_i,
\end{equation}
where $\alpha_0 = 1/2$, $\theta_{0j} = 1/j^2 \ (j=1,\ldots,10)$, and $\theta_{0j}=0$ otherwise, $x_i = (1,z_i^{\T} )^{\T} $ consists of an intercept and covariates $z_i \sim N(0,\Sigma)$, and the errors $\epsilon_i$ and $v_i$ are independently and identically
distributed as $N(0,1)$. The dimension $p$ of the controls $x_i$ is $300$, and the sample size $n$ is $250$.  The covariance matrix $\Sigma$ has entries $\Sigma_{ij} = \rho^{|i-j|}$ with $\rho = 0{\cdot}5$. The coefficients $c_y$ and $c_d$ determine the $R^2$ in the equations $y_i - d_i\alpha_0 = x_i^{\T} (c_y\theta_0) + \epsilon_i$ and
$d_i = x_i^{\T} (c_d\theta_0) + v_i$. We  vary the $R^2$ in the two equations, denoted by $R^2_y$ and $R^2_d$ respectively, in the set $\{0,0{\cdot}1,\ldots,0{\cdot}9\}$, which results in 100 different designs induced by the different pairs of $(R^2_y, R^2_d)$; we performed $500$ Monte Carlo repetitions for each.

The first equation in (\ref{MC}) is a sparse model. However, unless $c_y$ is very large, the decay of the components of $\theta_0$ rules out the typical assumption that the coefficients of important regressors are well separated from zero. Thus we anticipate that the standard post-selection inference procedure, discussed around (\ref{def:postl1qr}), would work poorly in the simulations. In contrast, from the prior theoretical arguments, we anticipate that our instrumental median estimator would work well.

The simulation study focuses on Algorithm 1, since Algorithm 2 performs similarly. Standard errors are computed using  (\ref{Eq:RobustSE}).
As the main benchmark we consider the standard post-model selection estimator $\widetilde \alpha$ based on the post $\ell_1$-penalized median regression method (\ref{def:postl1qr}).
\begin{figure}[h!]
\includegraphics[width=\textwidth, height=4.5in]{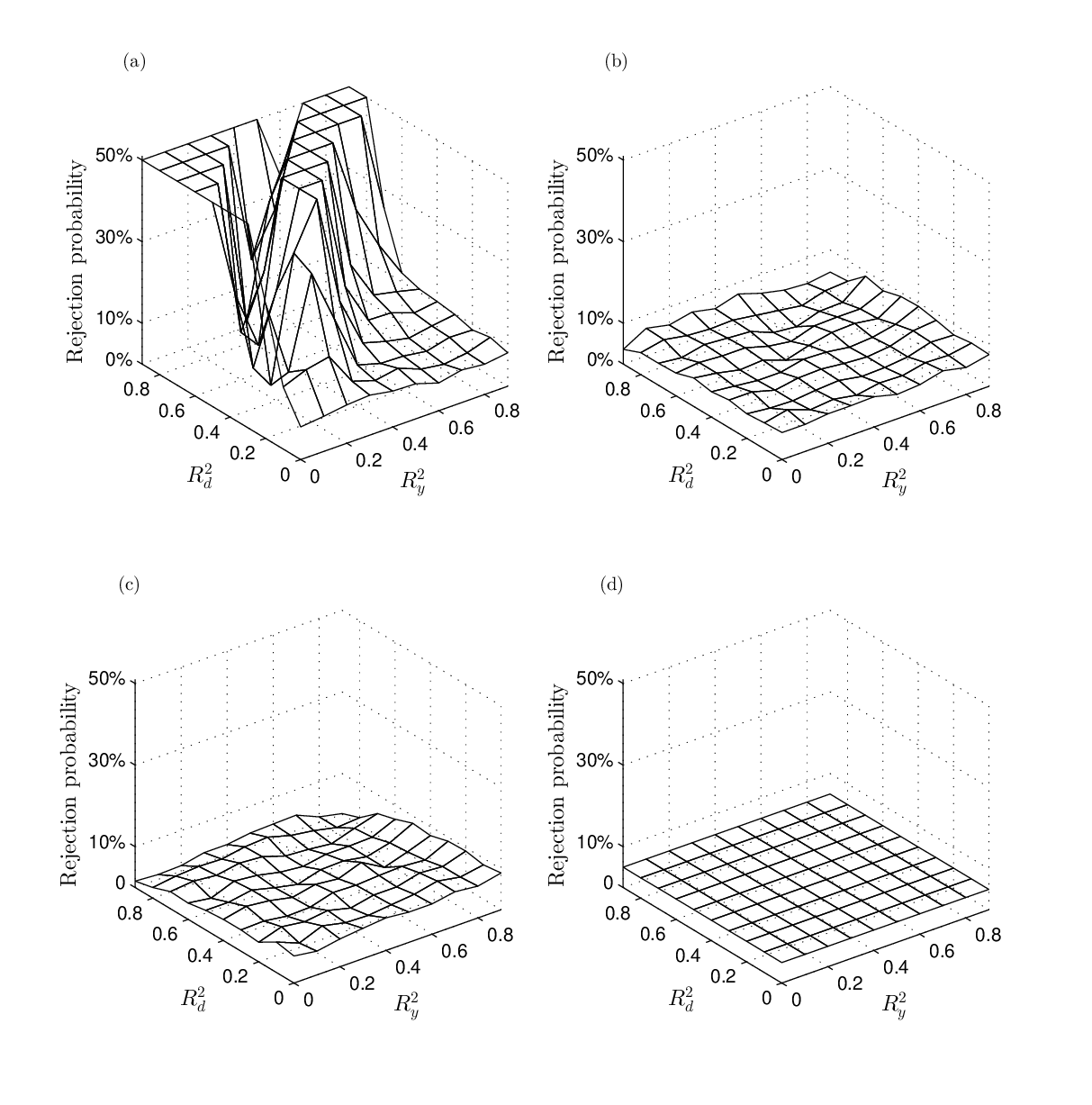}
\caption{The empirical false rejection probabilities of the nominal $5\%$ level tests based on:  (a)
the standard post-model selection procedure based on $\widetilde \alpha$, (b) the proposed post-model selection procedure based on $\check \alpha$,
(c) the score statistic $L_n$, and (d) an ideal procedure with the false rejection rate equal to the nominal size.}
\label{Fig:SimFirst}
\end{figure}

\begin{figure}[h!]
\includegraphics[width=1\textwidth, height=4.5in]{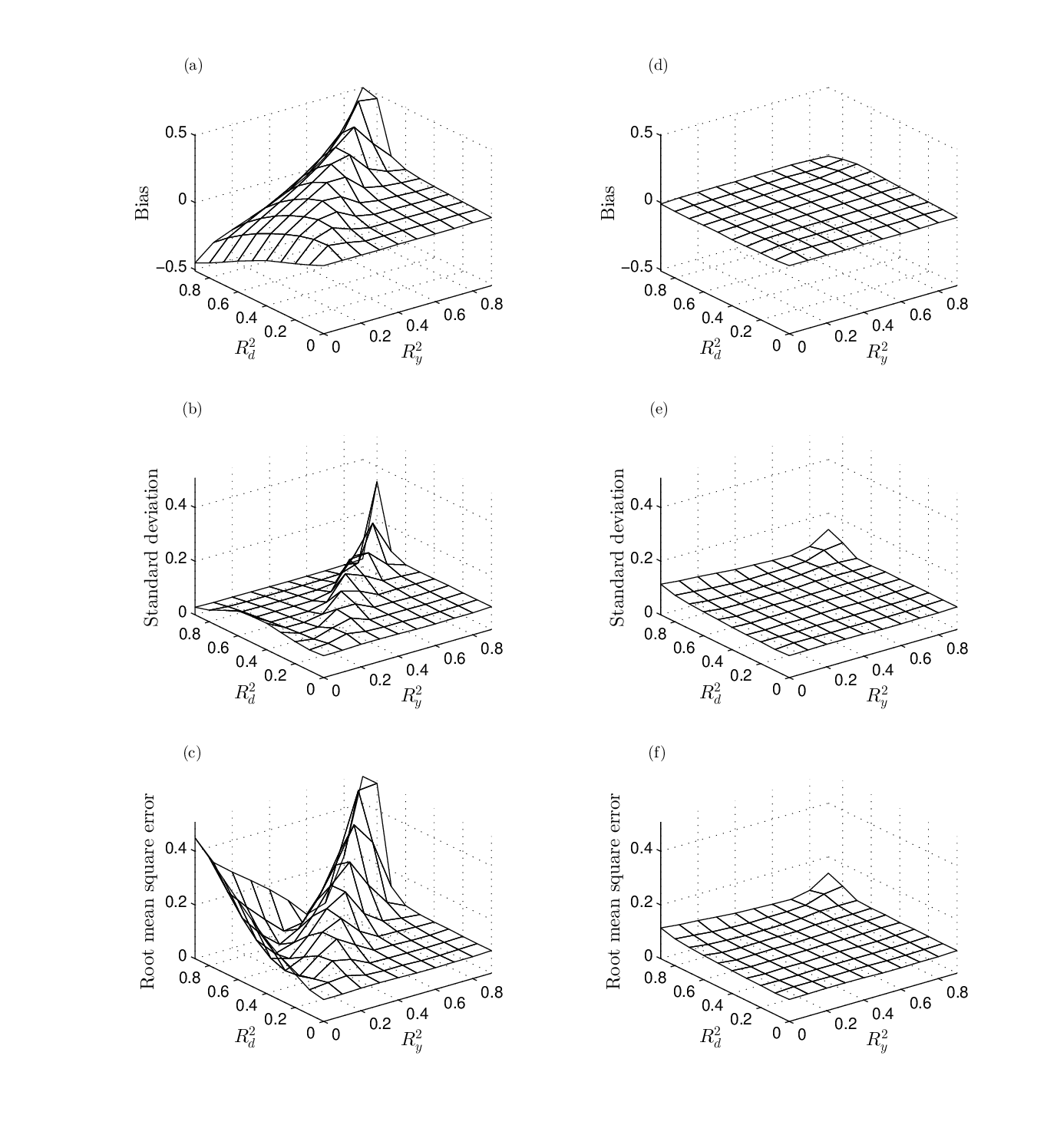}
\caption{Mean bias (top row), standard deviation (middle row), root mean square (bottom row) of the standard  post-model selection estimator $\widetilde \alpha$ (panels (a)-(c)),  and
of the proposed post-model selection estimator $\check \alpha$ (panels (d)-(f)). }\label{Fig:SimSecond} 
\end{figure}

In Figure \ref{Fig:SimFirst}, we display the empirical false rejection probability of tests of a true hypothesis $\alpha = \alpha_0$, with nominal size  $5\%$.  The false rejection probability of the  standard post-model selection inference procedure  based upon $\widetilde \alpha$ deviates sharply from the nominal size.  This confirms the anticipated failure, or lack of uniform validity,  of inference based upon the standard post-model selection procedure   in designs where coefficients are not well separated from zero so that perfect model selection does not happen. In sharp contrast, both of our proposed procedures, based on  estimator $\check \alpha$ and the result (\ref{Eq:Result1})  and on the statistic $L_n$ and the result (\ref{Eq:Result2}), closely track the nominal size. This is achieved uniformly over all the designs considered in the study, and confirms the theoretical results of Corollary \ref{cor:Uniformity}.

In Figure \ref{Fig:SimSecond}, we compare the performance of the standard post-selection estimator $\widetilde \alpha$ and our proposed post-selection estimator  $\check \alpha$.  We use three different measures of performance  of the two approaches: mean bias, standard deviation, and root mean square error. The significant bias for the standard post-selection procedure occurs when the main regressor $d_i$ is correlated with other controls $x_i$.
The proposed post-selection estimator $\check \alpha$ performs well in all three measures. The  root mean square errors  of  $\check \alpha$ are typically much smaller than those of  $\widetilde \alpha$, fully consistent with our theoretical results and the semiparametric efficiency of $\check \alpha$.
\vspace{-.1in}


\section*{Supplementary material}
\label{SM}

In the supplementary material we provide omitted proofs, technical lemmas, discuss extensions to the heteroscedastic case, and alternative implementations.
\appendix


\section{Proof of Theorem \ref{theorem2}}

\label{sec: proof of Theorem 2}

\subsection{A maximal inequality}

We first state a maximal inequality used in the proof of Theorem \ref{theorem2}.
 \begin{lemma}
\label{lemma:CCK} Let $w,w_{1},\ldots,w_{n}$ be independent and identically distributed random variables taking values in a measurable space, and let $\F$ be  a pointwise measurable class of functions on that space. Suppose that there is a measurable envelope $F \geq \sup_{f \in \F} |f|$ such that $\Ep \{ F^{q} (w) \}< \infty$ for some $q \geq 2$.  Consider the empirical process indexed by $\F$: $\Gn (f) = n^{-1/2} \sum_{i=1}^{n} [f(w_{i}) - \Ep \{ f(w) \} ], f \in \F$.  Let $\sigma > 0$ be any positive constant such that $\sup_{f \in \F} \Ep \{ f^{2} (w) \} \leq \sigma^{2} \leq \Ep \{ F^{2} (w) \}$. Moreover, suppose that there exist constants $A \geq e$ and $s \geq 1$ such that $\mathrm{ent}(\varepsilon, \F) \leq  s \log(A/\varepsilon)$ for all $0 < \varepsilon \leq 1$. Then
\begin{multline*}
\Ep \left \{  \sup_{f \in \F} | \Gn(f) | \right \} \leq  K \Bigg [ \left \{ s \sigma^{2} \log ( A [ \Ep \{ F^{2} (w) \}]^{1/2}/\sigma ) \right \}^{1/2} \\
+  n^{-1/2+1/q} s[ \Ep \{ F^{q} (w) \} ]^{1/q} \log ( A[ \Ep \{ F^{2} (w) \} ]^{1/2}/\sigma) \Bigg ],
\end{multline*}
where $K$ is a universal constant. Moreover, for every $t \geq 1$, with probability not less than $1-t^{-q/2}$,
\[
 \sup_{f \in \F} | \Gn(f) | \leq 2 \Ep \left \{ \sup_{f \in \F} | \Gn(f) | \right \} + K_{q} \left ( \sigma {t}^{1/2}
+  n^{-1/2+1/q}  [ \Ep \{ F^{q} (w) \} ]^{1/q} t \right ),
\]
where $K_{q}$ is a constant that depends only on $q$.
\end{lemma}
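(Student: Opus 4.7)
The plan is to follow the standard empirical process route, combining symmetrization, Dudley-type chaining under the polynomial-entropy hypothesis, and a truncation argument to handle the envelope having only $q$ moments; the two terms in the bound reflect the sub-Gaussian chaining contribution and the residual heavy-tail envelope contribution.

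First, I would apply the symmetrization inequality of Giné and Zinn to replace $\Ep\{\sup_{f\in\F}|\Gn(f)|\}$ by (twice) the expectation of the supremum of the symmetrized Rademacher process $f \mapsto n^{-1/2}\sum_{i=1}^n \varepsilon_i f(w_i)$. Conditional on $(w_i)_{i=1}^n$, this is a sub-Gaussian process with respect to the random $L^2(\mathbb{P}_n)$ pseudometric, and Dudley's chaining gives an entropy integral bound. Using the hypothesis $\mathrm{ent}(\varepsilon,\F) \leq s\log(A/\varepsilon)$ and the fact that the supremum of $\Ep_n f^2$ over $f \in \F$ is bounded in expectation by $\sigma^2 + \text{(small remainder)}$, the integral $\int_0^{\sigma/\|F\|_2} \sqrt{s\log(A/\varepsilon)}\,d\varepsilon$ yields the leading term $\{s\sigma^2 \log(A[\Ep F^2]^{1/2}/\sigma)\}^{1/2}$.

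Next, to isolate the envelope contribution, I would truncate: split each $f \in \F$ according to whether $|f(w_i)| \leq \tau$ or not, for a threshold $\tau$ to be chosen. On the truncated part the process is bounded, so a Talagrand-type / Hoffmann–Jørgensen inequality supplies the chaining contribution above, while the discarded part is controlled using only the $L^q$ moment of $F$ together with Markov's inequality and the entropy factor of $s\log(A/\sigma)$ traversed levels. Choosing $\tau \asymp n^{1/q}[\Ep F^q]^{1/q}$ balances the two, producing the additive term $n^{-1/2+1/q}s [\Ep F^q]^{1/q}\log(A[\Ep F^2]^{1/2}/\sigma)$ in the expectation bound.

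For the high-probability statement, I would apply Talagrand's concentration inequality for empirical processes in Bousquet form to the truncated part, which gives sub-Gaussian deviation of order $\sigma t^{1/2}$ around the mean, and then handle the untruncated remainder by a direct tail bound based on the $L^q$ envelope moment and the union bound over the truncation event $\max_i F(w_i) \leq n^{1/q}[\Ep F^q]^{1/q} t$, contributing the additive $n^{-1/2+1/q}[\Ep F^q]^{1/q} t$ term. Combining with the expectation bound from the first part and absorbing constants into $K_q$ yields the claimed inequality with probability at least $1 - t^{-q/2}$.

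The main technical obstacle will be bookkeeping the logarithmic factor so that it involves exactly $\log(A[\Ep F^2]^{1/2}/\sigma)$ rather than a coarser $\log(An)$: this requires that the chaining be run only down to scale $\sigma/[\Ep F^2]^{1/2}$ (Talagrand's local maximal inequality), and that the truncation threshold $\tau$ be chosen adaptively to balance the envelope and variance contributions at that same scale. Once that balance is in place, the remaining steps are routine applications of symmetrization, chaining, and Talagrand-Bousquet concentration, so I would simply cite the relevant arguments from \cite{CCK:AoS} and van der Vaart and Wellner for the explicit constants rather than redoing them in detail.
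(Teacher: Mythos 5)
Your outline is correct and takes essentially the same route as the paper: the paper's proof consists of citing Corollary 5.1 and Theorem 5.1 of Chernozhukov, Chetverikov and Kato (\emph{Ann.\ Statist.} 42, 2014) with $\alpha=1$, plus the single observation that $[\Ep\{\max_{i\le n} F^{2}(w_{i})\}]^{1/2}\leq n^{1/q}[\Ep\{F^{q}(w)\}]^{1/q}$, and those cited results are established by exactly the symmetrization, local chaining to scale $\sigma/[\Ep\{F^2(w)\}]^{1/2}$, envelope truncation, and Talagrand--Bousquet concentration that you describe. The only slip is bibliographic: the relevant source is the CCK paper on Gaussian approximation of suprema of empirical processes, not the one on maxima of sums of high-dimensional random vectors.
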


\begin{proof}
The first and second inequalities follow from Corollary 5.1 and Theorem 5.1 in \cite{CCK} applied with $\alpha = 1$, using that $[\Ep \{ \max_{i=1,\ldots,n} F^{2}(w_{i}) \} ]^{1/2} \leq [\Ep \{ \max_{i=1,\ldots,n} F^{q}(w_{i}) \} ]^{1/q} \leq n^{1/q}[ \Ep \{ F^{q} (w) \} ]^{1/q}$.
\end{proof}

\subsection{Proof of Theorem \ref{theorem2}}

It suffices to prove the theorem under any sequence $P = P_n \in \mathcal{P}_n$. We shall suppress the dependence of $P$ on $n$ in the proof.   In this proof, let $C$ denote a generic positive constant that may differ in each appearance, but that does not depend on the sequence $P \in \mathcal{P}_n$, $n$, or $j =1,\ldots,p_1$. Recall that the sequence $\rho_{n} \downarrow 0$ satisfies the growth conditions in Condition \ref{condition: AS} (iv).
We divide the proof into three steps. Below we use the following notation: for any given function $g: \mathcal{W} \to \RR$, $\Gn (g) = n^{-1/2} \sum_{i=1}^{n} [ g(w_{i}) - E\{ g(w) \} ]$.

\textit{Step} 1. Let $\tilde{\alpha}_{j}$ be any estimator such that with probability $1-o(1)$, $\max_{j =1,\ldots,p_1} | \tilde{\alpha}_{j} - \alpha_{j} | \leq C \rho_{n}$. We wish to show that, with probability $1-o(1)$,
\[
\En [\psi_{j} \{w , \tilde{\alpha}_{j}, \hat{h}_{j}(z_{j})\}]  = \En [\psi_{j} \{w , \alpha_{j}, h_{j}(z_{j})\} ] + \Gamma_{j} (\tilde{\alpha}_{j} - \alpha_{j}) + o (n^{-1/2} b_{n}^{-1}),
\]
uniformly in $j =1,\ldots,p_1$. Expand
\begin{multline*}
\En [\psi_{j} \{w , \tilde{\alpha}_{j}, \hat{h}_{j}(z_{j})\}] = \En [\psi_{j} \{w , \alpha_{j}, h_{j}(z_{j})\} ] + \Ep [ \psi_{j} \{w, \alpha, \tilde h(z_{j}) \}  ]|_{\alpha = \tilde{\alpha}_{j},\tilde h=\hat{h}_{j}} \\
+ n^{-1/2} \Gn [\psi_{j} \{w, \tilde{\alpha}_{j}, \hat{h}_{j}(z_{j}) \} - \psi_{j} \{w, \alpha_{j}, h_{j}(z_{j}) \} ]
=I_{j}+II_{j} +III_{j},
\end{multline*}
where we have used $\Ep [ \psi_{j}\{ w,\alpha_{j},h_{j}(z_{j}) \} ] = 0$.
We first bound $III_{j}$. Observe that, with probability $1-o(1)$,
$\max_{j =1,\ldots,p_1} | III_{j} |  \leq  n^{-1/2} \sup_{f \in \F} | \Gn(f)|$,
where $\F$ is the class of functions defined by
\[
\F =    \{ w \mapsto \psi_{j} \{ w, \alpha, \tilde h(z_{j}) \}- \psi_{j} \{ w, \alpha_{j}, h_{j}(z_{j}) \} :  j =1,\ldots,p_1,
\tilde h \in \mathcal{H}_{j}, \alpha \in \A_{j}, | \alpha - \alpha_{j} | \leq C \rho_n  \},
\]
which has $2F$ as an envelope.
We apply Lemma \ref{lemma:CCK} to this class of functions.
By Condition \ref{condition: AS} (ii) and a simple covering number calculation, we have $\ent (\varepsilon,\F) \leq Cs \log (a_{n}/\varepsilon)$.
By Condition \ref{condition: SP} (ii), $\sup_{f \in \F} \Ep \{ f^{2}(w) \}$ is bounded by
\[
 \sup_{\substack{j =1,\ldots,p_1,  (\alpha, \tilde h) \in \A_{j} \times \mathcal{H}_{j} \\ | \alpha - \alpha_{j} | \leq C \rho_{n} }}  \Ep \left \{ \Ep\left( \left [  \psi_{j} \{ w, \alpha, \tilde h(z_{j}) \}- \psi_{j} \{ w, \alpha_{j}, h_{j}(z_{j})\}  \right ]^2 \mid z_{j} \right) \right \}
\leq CL_{2n}\rho_n^{\varsigma},
\]
 where we have used the fact that $\Ep [ \{ \tilde h_{m}(z_{j}) - h_{j m}(z_{j}) \}^{2} ] \leq  C \rho_{n}^{2}$ for all $m=1,\ldots, M$ whenever $\tilde h = (\tilde h_{m})_{m=1}^{M} \in \mathcal{H}_{j}$.
Hence applying Lemma \ref{lemma:CCK} with $t = \log n$, we conclude that, with probability $1-o(1)$,
\[
n^{1/2} \max_{j =1,\ldots,p_1}| III_{j} |  \leq  \sup_{f \in \F} | \Gn(f)| \leq C\{ \rho_n^{\varsigma/2} (L_{2n} s \log a_n)^{1/2}    +  n^{-1/2+1/q} s  \log a_{n}\} = o(b_n^{-1}),
\]
where the last equality follows from Condition \ref{condition: AS} (iv).

Next, we expand $II_{j}$. Pick any $\alpha \in \A_{j}$ with $| \alpha - \alpha_{j} | \leq C \rho_{n}, \tilde h = (\tilde h_{m})_{m=1}^{M} \in \mathcal{H}_{j}$. Then by Taylor's theorem, for any $j=1,\ldots,p_1$ and $z_{j} \in \mathcal{Z}_{j}$, there exists a vector $(\bar{\alpha}(z_j),\bar{t}(z_j)^{\T} )^{\T}$ on the line segment joining $( \alpha,\tilde h(z_{j})^{\T} )^{\T}$ and $( \alpha_{j},h_{j}(z_{j})^{\T} )^{\T}$ such that  $\Ep [ \psi_{j} \{w, \alpha, \tilde h(z_{j}) \} ]$ can be written as
\begin{align}
&  \Ep  [ \psi_{j} \{w, \alpha_{j}, h_{j}(z_{j}) \}  ] + \Ep (\partial_{\alpha} \Ep [ \psi_{j} \{w, \alpha_{j}, h_{j}(z_{j}) \}  \mid z_{j} ] )(\alpha - \alpha_{j})  \notag \\
&+ {\textstyle  \sum}_{m=1}^{M}  \Ep\{ \Ep \left ( \partial_{t_{m}}  \Ep [\psi_{j} \{ w, \alpha_{j}, h_{j}(z_{j}) \}  \mid z_{j} ]\right )  \{ \tilde h_{m}(z_{j}) - h_{j m}(z_{j}) \} \} \notag \\
&  + 2^{-1} \Ep(\partial_{\alpha}^{2} \Ep [ \psi_{j} \{ w,\bar{\alpha}(z_j), \bar{t}(z_j) \}  \mid z_{j} ]) (\alpha - \alpha_{j})^{2}   \notag \\
& + 2^{-1} {\textstyle  \sum}_{m,m' = 1}^{M} \Ep( \partial_{t_{m}} \partial_{t_{m'}} \Ep [ \psi_{j} \{ w,\bar{\alpha}(z_j), \bar{t}(z_j)\}  \mid z_{j} ] \{ \tilde h_{m}(z_{j}) - h_{j m}(z_{j}) \} \{ \tilde h_{m'}(z_{j}) - h_{j m'}(z_{j}) \} ) \notag \\
& +  {\textstyle  \sum}_{m = 1}^{M} \Ep( \partial_{\alpha} \partial_{t_{m}} \Ep [ \psi_{j} \{ w,\bar{\alpha}(z_j), \bar{t}(z_j)\}  \mid z_{j} ] (\alpha - \alpha_{j}) \{ \tilde h_{m}(z_{j}) - h_{j m}(z_{j}) \}). \label{eq: expansion}
\end{align}
The third term is zero because of the orthogonality condition (\ref{eq:orthogonality}). Condition \ref{condition: SP} (ii) guarantees that the expectation and derivative can be interchanged for the second term, that is,
$\Ep \left ( \partial_{\alpha} \Ep [ \psi_{j} \{w, \alpha_{j}, h_{j}(z_{j}) \}  \mid z_{j} ] \right ) = \partial_{\alpha} \Ep [ \psi_{j} \{w, \alpha_{j}, h_{j}(z_{j}) \} ] = \Gamma_{j}$.
Moreover, by the same condition,  each of the last three terms is bounded by $CL_{1n} \rho^{2}_{n} = o(n^{-1/2} b_{n}^{-1})$, uniformly in $j =1,\ldots,p_1$. Therefore, with probability $1-o(1)$,
$II_{j} = \Gamma_{j} (\tilde{\alpha}_{j} - \alpha_{j}) + o (n^{-1/2}b_{n}^{-1})$,
uniformly in $j =1,\ldots,p_1$. Combining the previous bound on $III_{j}$ with these bounds leads to the desired assertion.

\textit{Step} 2.  We wish to show that with probability $1-o(1)$,
$\inf_{\alpha \in \hat{\A}_{j}} |  \En [\psi_{j} \{w , \alpha, \hat{h}_{j}(z_{j})\}] |  = o(n^{-1/2}b_{n}^{-1})$,
uniformly in $j =1,\ldots,p_1$. Define $\alpha^{*}_{j} = \alpha_{j} - \Gamma_{j}^{-1} \En [\psi_{j} \{w , \alpha_{j}, h_{j}(z_{j})\} ] \quad (j =1,\ldots,p_1)$. Then we have $\max_{j =1,\ldots,p_1} | \alpha^{*}_{j} - \alpha_{j} | \leq C \max_{j =1,\ldots,p_1} | \En [\psi_{j} \{w , \alpha_{j}, h_{j}(z_{j})\} ] |$. Consider the class of functions $\F' = \{ w \mapsto \psi_{j} \{w , \alpha_{j}, h_{j}(z_{j})\} : j =1,\ldots,p_1 \}$, which has $F$ as an envelope. Since this class is finite with cardinality $p_{1}$, we have $\ent (\varepsilon, \F') \leq \log (p_{1}/\varepsilon)$.
Hence applying Lemma \ref{lemma:CCK} to $\F'$ with $\sigma  =[ \Ep \{ F^{2}(w) \} ]^{1/2} \leq C$ and $t = \log n$, we conclude that with probability $1-o(1)$,
\[
\max_{j =1,\ldots,p_1} | \En [\psi_{j} \{w , \alpha_{j}, h_{j}(z_{j})\} ] | \leq C n^{-1/2} \{  (\log a_{n})^{1/2} + n^{-1/2+1/q} \log a_{n} \} \leq C n^{-1/2}  \log a_{n}.
\]
Since $\hat{\A}_{j} \supset [\alpha_{j} \pm c_1 n^{-1/2}  \log^2 a_{n} ]$ with probability $1-o(1)$, $\alpha_{j}^{*} \in \hat{\A}_{j}$ with probability $1-o(1)$.

Therefore, using Step 1 with $\tilde{\alpha}_{j} = \alpha_{j}^{*}$, we have, with probability $1-o(1)$,
\[
\inf_{\alpha \in \hat{\A}_{j}} |  \En [\psi_{j} \{w , \alpha, \hat{h}_{j}(z_{j})\}] | \leq |  \En [\psi_{j} \{w , \alpha_{j}^{*}, \hat{h}_{j}(z_{j})\}] | = o(n^{-1/2} b_{n}^{-1}),
\]
uniformly in $j =1,\ldots,p_1$, where we have used the fact that $\En [\psi_{j} \{w , \alpha_{j}, h_{j}(z_{j})\} ] + \Gamma_{j} (\alpha_{j}^{*} - \alpha_{j}) = 0$.

\textit{Step} 3. We wish to show that with probability $1-o(1)$,  $\max_{j =1,\ldots,p_1} | \hat{\alpha}_{j} - \alpha_{j} | \leq C \rho_{n}$. By Step 2 and the definition of $\hat{\alpha}_{j}$, with probability $1-o(1)$, we have $
\max_{j =1,\ldots,p_1} | \En [ \psi_{j}\{ w,\hat{\alpha}_{j},\hat{h}_{j}(z_{j}) \} ] | = o(n^{-1/2} b_{n}^{-1})$.
Consider the class of functions $\F'' = \{ w \mapsto \psi_{j}\{ w,\alpha,\tilde h(z_{j}) \} : j =1,\ldots,p_1, \alpha \in \A_{j}, \tilde h \in \mathcal{H}_{j} \cup \{ h_{j} \} \}$. Then with probability $1-o(1)$,
\[
| \En [ \psi_{j}\{ w,\hat{\alpha}_{j},\hat{h}_{j}(z_{j}) \} ] | \geq \left | \Ep[ \psi_{j}\{ w,\alpha,\tilde h(z_{j}) \} ]|_{\alpha = \hat{\alpha}_{j},\tilde h=\hat{h}_{j}} \right | - n^{-1/2} \sup_{f \in \F} | \Gn (f) |,
\]
uniformly in $j =1,\ldots,p_1$. Observe that $\F''$ has $F$ as an envelope and, by Condition \ref{condition: AS} (ii) and a simple  covering number calculation, $\ent (\varepsilon, \F'') \leq C s\log (a_{n}/\varepsilon)$. Then applying Lemma \ref{lemma:CCK} with $\sigma  = [\Ep \{ F^{2}(w) \}]^{1/2} \leq C$ and $t=\log n$, we have, with probability $1-o(1)$,
\[
n^{-1/2} \sup_{f \in \F''} | \Gn (f) | \leq C n^{-1/2} \{ (s \log a_{n})^{1/2} + n^{-1/2+1/q} s \log a_{n}  \} = O(\rho_{n}).
\]
Moreover, application of the expansion (\ref{eq: expansion}) with $\alpha_{j} = \alpha$ together with the Cauchy--Schwarz inequality implies that $| \Ep[ \psi_{j}\{ w,\alpha,\tilde h(z_{j}) \} ] - \Ep[ \psi_{j}\{ w,\alpha,h_{j}(z_{j}) \} ] |$ is bounded by $C( \rho_{n} + L_{1n} \rho_{n}^{2}) = O(\rho_{n})$,
so that with probability $1-o(1)$,
\[
\left | \Ep[ \psi_{j}\{ w,\alpha,\tilde h(z_{j}) \} ]|_{\alpha = \hat{\alpha}_{j},\tilde h=\hat{h}_{j}} \right | \geq \left | \Ep[ \psi_{j}\{ w,\alpha,h_{j} (z_{j}) \} ]|_{\alpha = \hat{\alpha}_{j}} \right | - O(\rho_{n}),
\]
uniformly in $j =1,\ldots,p_1$, where we have used Condition \ref{condition: SP} (ii) together with the fact that  $\Ep [ \{ \tilde h_{m}(z_{j}) - h_{j m}(z_{j}) \}^{2}] \leq C \rho^{2}_{n}$ for all $m=1,\ldots,M$ whenever $\tilde h = (\tilde h_{m})_{m=1}^{M} \in \mathcal{H}_{j}$. By Condition \ref{condition: SP} (iv), the first term on the right side is bounded from below by $(1/2) \{ | \Gamma_{j} (\hat{\alpha}_{j}-\alpha_{j}) | \wedge c_{1} \}$, which, combined with the fact that $| \Gamma_{j} | \geq c_{1}$, implies that with probability $1-o(1)$,
$| \hat{\alpha}_{j} - \alpha_{j} | \leq o(n^{-1/2} b_{n}^{-1}) + O(\rho_{n}) = O(\rho_{n})$,
uniformly in $j =1,\ldots,p_1$.

\textit{Step} 4. By Steps 1 and 3, with probability $1-o(1)$,
\[
\En [\psi_{j} \{w , \hat{\alpha}_{j}, \hat{h}_{j}(z_{j})\}]  = \En [\psi_{j} \{w , \alpha_{j}, h_{j}(z_{j})\} ] + \Gamma_{j} (\hat{\alpha}_{j} - \alpha_{j}) + o (n^{-1/2} b_{n}^{-1}),
\]
uniformly in $j =1,\ldots,p_1$. Moreover, by Step 2, with probability $1-o(1)$, the left side is $o(n^{-1/2}b_{n}^{-1})$ uniformly in $j =1,\ldots,p_1$. Solving this equation with respect to $(\hat{\alpha}_{j} - \alpha_{j})$ leads to the conclusion of the theorem.
\qed

\bibliographystyle{plain}

\newpage

\begin{center}
{\Large Suplementary Material \\
 Uniform Post Selection Inference for Least Absolute Deviation Regression and Other Z-estimation Problems}
\end{center}
\bigskip
\begin{quote}
This supplementary material contains omitted proofs, technical lemmas, discussion of the extension to the heteroscedastic case, and alternative implementations of the estimator.
\end{quote}

\section{Additional Notation in the Supplementary Material} In addition to the notation used in the main text, we will use the following notation.
Denote by $\| \cdot \|_{\infty}$  the maximal absolute element of a vector. Given a vector $\delta \in \RR^p$ and a set of
indices $T \subset \{1,\ldots,p\}$, we denote by $\delta_T \in \RR^p$ the vector such that $(\delta_{T})_{j} = \delta_j$ if $j\in T$ and $(\delta_{T})_{j}=0$ if $j \notin T$.
For a sequence $(z_{i})_{i=1}^{n}$ of constants, we write $\| z_{\ii} \|_{2,n} = \{ \En ( z_{\ii}^{2} )\}^{1/2} = ( n^{-1} \sum_{i=1}^{n} z_{i}^{2} )^{1/2}$.
For example, for a vector $\delta \in \RR^{p}$ and $p$-dimensional regressors $(x_{i})_{i=1}^{n}$, $\| x_{\ii}^{T}\delta \|_{2,n} = [ \En \{(x_{\ii}^{\T}\delta)^{2}\}]^{1/2}$ denotes the empirical prediction norm of $\delta$.
Denote by $\| \cdot \|_{P,2}$ the population $L^{2}$-seminorm.
We also use the notation $a \lesssim b$ to denote $a \leq c b$ for some constant $c>0$ that does not depend on $n$; and $a \lesssim_P b$ to denote $a=O_P(b)$.

\section{Generalization and Additional Results for the Least Absolute Deviation model}

\subsection{Generalization of Section 2 to heteroscedastic case}
\label{sec: hetero}

We emphasize that both proposed algorithms exploit the homoscedasticity of the model (\ref{Eq:direct}) with respect to the error term $\epsilon_i$. The generalization to the heteroscedastic case can be achieved as follows. Recall the model $y_i = d_i \alpha_0 + x_i^{\T} \beta_0 + \epsilon_i$ where $\epsilon_i$ is now not necessarily independent of $d_i$ and $x_i$ but obeys the conditional median restriction $\Pr (\epsilon_i \leq 0 \mid d_i, x_i ) = 1/2$. To achieve the semiparametric efficiency bound in this general case, we need to consider the weighted version of the auxiliary equation (\ref{Eq:indirect}). Specifically,  we rely on the weighted decomposition:
\begin{equation}\label{Eq:indirectGen}
f_i d_i = f_i x_i^{\T} \theta_0^* + v_i^*, \  \Ep (f_iv_i^*\mid x_i)=0 \quad (i=1,\ldots,n),
\end{equation}
where the weights are the conditional densities of the error terms $\epsilon_i$ evaluated at their
conditional medians of zero:
\begin{equation}
f_i =  f_{\epsilon_i}(0 \mid d_i,x_i) \quad (i=1,\ldots,n),
\end{equation}
which in general vary under heteroscedasticity. With that in mind it is straightforward to adapt the proposed algorithms when the weights $(f_i)_{i=1}^n$ are known. For example Algorithm 1 becomes as follows.


\renewcommand{\thealgo}{1$'$}
\begin{algo}
The algorithm is based on post-model selection estimators. \\
\enspace \emph{Step} (i). Run post-$\ell_1$-penalized median regression of $y_i$ on  $d_i$ and $x_i$; keep fitted value $ x_i^{\T} \tilde \beta$.\\
\enspace \emph{Step} (ii). Run  the post-lasso estimator of $f_id_i$ on $f_ix_i$; keep the residual $\hat v_i^*=f_i(d_i-x_i^{\T} \tilde \theta)$.  \\
\enspace \emph{Step} (iii). Run instrumental median regression of $y_i - x_i^{\T}\widetilde \beta$ on $d_i$
using $\hat v_i^{*}$ as the instrument. Report $\check\alpha$ and/or perform inference.
\end{algo}

Analogously, we obtain Algorithm $2'$, as a generalization of Algorithm 2 in the main text,  based on regularized estimators, by removing the word ``post'' in Algorithm 1$'$.

Under similar regularity conditions, uniformly over a large collection $\mathcal{P}_{n}^*$ of distributions of $\{ (y_{i},d_{i},x_i^{\T} )^{\T} \}_{i=1}^{n}$, the estimator $\check \alpha$ above obeys
\[
\{ 4\Ep (v_\ii^{*2}) \}^{1/2} n^{1/2} (\check \alpha - \alpha_0) \to N(0,1)
\]
in distribution. Moreover, the criterion function at the true value $\alpha_0$ in Step (iii) also has a pivotal behavior, namely
\[
n L_n(\alpha_0)  \to \chi^2_1
\]
in distribution, which can also be used to construct a confidence region $\hat A_{\xi}$ based on the $L_n$-statistic as in (\ref{Eq:Result2}) with coverage $1-\xi$ uniformly in a suitable collection of distributions.

In practice the density function values $(f_i)_{i=1}^n$ are unknown and need to be replaced by estimates $(\hat f_i)_{i=1}^n$. The analysis of the impact of such estimation is very delicate and is developed in the companion work ``Robust inference in high-dimensional approximately sparse quantile
  regression models" (arXiv:1312.7186), which considers the more general problem of uniformly valid inference for quantile regression models in approximately sparse models.

\subsection{Minimax Efficiency}\label{sec: minimax}

The asymptotic variance, $(1/4) \{ \Ep (v_\ii^{*2}) \}^{-1}$, of the estimator $\check \alpha$ is the semiparametric efficiency bound for estimation of $\alpha_0$.
To see this, given a law $P_{n}$ with $\|\beta_0\|_0 \vee \| \theta^*_0\|_0 \leq s/2$, we first consider a submodel
$\mathcal{P}_{n}^{\text{sub}} \subset \mathcal{P}^*_n$ such that $P_n \in \mathcal{P}_{n}^{\text{sub}}$,  indexed by the parameter $t = (t_1,t_2) \in \RR^{2}$ for the parametric components $\alpha_0,\beta_0$ and described as:
\begin{align*}
y_i & =  d_i (\alpha_0+ t_1) + x_i^{\T} (\beta_0 + t_2 \theta^*_0) + \epsilon_i,   \\
f_i d_i & =  f_i x_i^{\T} \theta_0^* + v_i^*,  \quad \Ep (f_i v_i^* \mid x_i) =0,
\end{align*}
where the conditional density of $\epsilon_i$ varies. Here we use $\mathcal{P}^*_n$ to denote the overall model
collecting all distributions for which a variant of conditions of Theorem \ref{theorem:inferenceAlg1} permitting
heteroscedasticity is satisfied. In this submodel, setting $t =0$ leads to the given parametric components $\alpha_0,\beta_0$ at $P_n$. Then by using a similar argument to \cite{Lee2003}, Section 5, the efficient score for $\alpha_0$ in this submodel is
\[
S_i = 4\varphi(y_i -d_i\alpha_0-x_i^{\T} \beta_0) f_i \{ d_i -  x_i^{\T}\theta^*_0 \} = 4 \varphi (\epsilon_i) v_i^{*},
\]
so that $\{ \Ep (S_i^{2}) \}^{-1} = (1/4) \{ \Ep (v_i^{*2}) \}^{-1}$ is the efficiency bound at $P_n$ for estimation of $\alpha_0$ relative to the submodel, and hence relative to the entire model $\mathcal{P}^*_n$, as the bound is attainable by our estimator $\check \alpha$ uniformly in $P_n$  in $\mathcal{P}^*_n$.  This efficiency bound continues to apply in the homoscedastic model with $f_i =f_\epsilon$ for all $i$.

\subsection{Alternative implementation via double selection}

An alternative proposal for the method is reminiscent of the double selection method proposed in \cite{BelloniChernozhukovHansen2011} for partial linear models. This version replaces Step (iii)  with a median regression of $y$ on $d$ and all covariates selected in Steps (i) and (ii), that is, the union of the selected sets. The method is described  as follows:

\medskip

\renewcommand{\thealgo}{3}
\begin{algo}
The algorithm is based on double selection. \\
\enspace \emph{Step} (i). Run $\ell_1$-penalized median regression of $y_i$ on  $d_i$ and $x_i$:
$$\displaystyle  (\hat\alpha,\hat\beta) \in  \arg \min_{\alpha,\beta} \En (|y_\ii-d_\ii\alpha-x_\ii^{\T} \beta|) + \frac{\lambda_1}{n}\|\Psi(\alpha,\beta^{\T} )^{\T} \|_1.
$$
\enspace \emph{Step} (ii).  Run  lasso of $d_i$ on $x_i$:
$$
  \hat\theta \in \arg \min_{\theta} \En \{ ( d_\ii - x_\ii^{\T} \theta )^2 \} + \frac{\lambda_2}{n}\|\hat \Gamma \theta\|_1.
$$
\enspace \emph{Step} (iii). Run median regression of $y_i$ on  $d_i$ and the covariates selected in Steps (i) and (ii):
$$\displaystyle
  (\check\alpha,\check\beta) \in \arg \min_{\alpha,\beta} \left \{ \En (|y_\ii-d_\ii\alpha-x_\ii^{\T} \beta| ) : \ \supp(\beta)\subset \supp(\hat\beta) \cup \supp(\hat\theta) \right \}.
$$ Report $\check\alpha$ and/or perform inference.
\end{algo}

\medskip
The double selection algorithm has three main steps: (i) select covariates based on the standard $\ell_1$-penalized median regression, (ii) select covariates based on heteroscedastic  lasso of the treatment equation, and (ii) run a median regression with the treatment and all selected covariates.

This approach can also be analyzed through Theorem \ref{theorem2} since it creates instruments implicitly. To see that let $\hat T^*$ denote the variables selected in Steps (i) and (ii): $\hat T^* = \supp(\hat\beta)\cup \supp(\hat\theta)$. By the first order conditions for $(\check\alpha,\check\beta)$ we have
\[
\left \| \En \left \{ \varphi (y_{\ii}-d_\ii \check \alpha -  x_{\ii}^{\T} \check \beta) (d_{\ii},x_{\ii \hat T^*}^{\T} )^{\T} \right \} \right \| = O\{ (\max_{i=1,\ldots,n} | d_i | + K_{n} | \hat T^* |^{1/2} ) (1+|\hat T^* |)/n \},
\]
which creates an orthogonal relation to any linear combination of $(d_i, x_{i\hat T^*}^{\T} )^{\T} $. In particular, by taking the linear combination $(d_i, x_{i\hat T^*}^{\T} )(1,-\tilde\theta_{\hat T^{*}}^{\T} )^{\T} =d_i - x_{i \hat T^*}^{\T} \tilde \theta_{\hat T^*} = d_i - x_i^{\T} \tilde\theta =\hat v_i$, which is the instrument in Step (ii) of Algorithm 1, we have
\[
\En \{  \varphi (y_{\ii}-d_\ii \check \alpha -  x_{\ii}^{\T} \check \beta) \hat z_\ii \} =O\{ \| (1,-\tilde \theta^{\T} )^{\T}  \|  (\max_{i=1,\ldots,n} | d_i | + K_{n} | \hat T^* |^{1/2} ) (1+|\hat T^* |)/n \}.
\]
As soon as the right side is $o_{P}(n^{-1/2})$,  the double selection estimator $\check\alpha$ approximately minimizes
\[
\tilde L_n(\alpha) =  \frac{| \En \{ \varphi(y_\ii-  d_\ii \alpha - x_\ii^{\T} \check\beta)\hat v_\ii \} |^2}{\En[ \{ \varphi(y_\ii-d_\ii \check\alpha - x_\ii^{\T} \check\beta) \}^2 \hat v_\ii^2 ]},
\]
where $\hat v_{i}$ is the instrument created by Step (ii) of Algorithm 1. Thus the double selection estimator can be seen as an iterated version of the method based on instruments where the Step (i) estimate $\tilde \beta$ is updated with $\check \beta$. 

\section{Auxiliary Results for $\ell_1$-Penalized Median Regression and Heteroscedastic Lasso}\label{Sec:AnalysisAux}

\subsection{Notation}
In this section we state relevant theoretical results on the performance of the estimators: $\ell_1$-penalized median regression, post-$\ell_1$-penalized median regression, heteroscedastic lasso, and heteroscedastic post-lasso estimators. There results were developed in \cite{BC-SparseQR} and \cite{BellChenChernHans:nonGauss}.
We keep the notation of Sections 1 and 2 in the main text, and let $\tilde x_i =(d_i, x_i^{\T})^{\T}$. Throughout the section, let $c_{0} > 1$ be a fixed constant chosen by users. In practice, we suggest to take $c_{0} = 1{\cdot}1$ but the analysis is not restricted to this choice. Moreover, let $c_{0}' = (c_{0}+1)/(c_{0}-1)$. Recall the definition of
the minimal and maximal $m$-sparse eigenvalues of a matrix $A$ as
\[
\phi_{\min}(m,A) = \min_{1 \leq \| \delta \|_{0} \leq m} \frac{\delta^{\T} A \delta}{\|\delta\|^2}, \ \ \
\phi_{\max}(m,A) = \max_{1 \leq \| \delta \|_{0} \leq m} \frac{\delta^{\T} A\delta}{\|\delta\|^2},
\]
where $m=1,\ldots,p$. Also recall $\bsemin{m}=\phi_{\min} \{ m,\Ep ( \tilde x_\ii \tilde x_\ii^{\T} )\}, \bsemax{m}=\phi_{\max}\{ m,\Ep (\tilde x_\ii \tilde x_\ii^{\T}) \}$, and define $\semin{m} =\phi_{\min} \{m,\En(\tilde x_\ii \tilde x_\ii^{\T} )\}, \phi_{\min}^{x} (m) =\phi_{\min} \{m,\En( x_\ii x_\ii^{\T} )\}, \ \text{and} \ \phi_{\max}^{x} (m) =\phi_{\max} \{m,\En( x_\ii x_\ii^{\T} )\}$.
Observe that $\semax{m} \leq 2 \En (d^{2}) + 2\phi_{\max}^{x}(m)$.


\subsection{$\ell_1$-penalized median regression}\label{Sec:Step1}

Suppose that $\{ (y_{i},\tilde{x}_{i}^{\T})^{\T} \}_{i=1}^{n}$ are independent and identically distributed random vectors satisfying the conditional median restriction
\[
\Pr (y_i \leq \tilde x_i^{\T} \eta_0 \mid \tilde x_{i} ) = 1/2 \ \ \ (i=1,\ldots,n).
\]
We consider the estimation of $\eta_0$ via the $\ell_1$-penalized median regression estimate
\[
\hat \eta \in \arg\min_{\eta} \En ( |y_\ii - \tilde x_\ii^{\T} \eta|) + \frac{\lambda}{n}\|\Psi\eta\|_{1},
\]
where $\Psi^2 = \diag\{ \En (\tilde x_{\ii 1}^2),\ldots,\En(\tilde x_{\ii p}^2) \}$ is a diagonal matrix of penalty loadings. As established in \cite{BC-SparseQR} and \cite{Wang2012}, under the event that
\begin{equation}
\label{Eq:PenaltyL1QR}
\frac{\lambda}{n} \geq 2c_{0} \| \Psi^{-1}\En[ \{ 1/2 - 1( y_\ii \leq \tilde x_\ii^{\T} \eta_0 ) \}\tilde x_\ii]\|_\infty,
\end{equation}
the estimator above achieves good theoretical guarantees under mild design conditions. Although $\eta_0$ is unknown, we can set $\lambda$ so that the event in (\ref{Eq:PenaltyL1QR}) holds with high probability. In particular, the pivotal rule discussed in \cite{BC-SparseQR} proposes to set $\lambda =  c_{0} n\Lambda(1-\gamma \mid \tilde x)$ with $\gamma \to 0$ where
\begin{equation}
\label{Eq:PenaltyL1QRPivotal} \Lambda(1-\gamma \mid \tilde x)=Q(1-\gamma,2\| \Psi^{-1}\En[ \{ 1/2 - 1( U_\ii \leq 1/2 )\} \tilde x_\ii]\|_\infty),
\end{equation} where $Q(1-\gamma,Z)$ denotes the $(1-\gamma)$-quantile of a random variable $Z$.
Here $U_{1},\dots,U_{n}$ are independent uniform random variables on $(0,1)$ independent of $\tilde x_{1},\dots,\tilde x_{n}$. This quantity can be easily approximated via simulations.
The values of $\gamma$ and $c_0$ are chosen by users, but we suggest to take $\gamma = \gamma_{n} = 0{\cdot}1/\log n$ and $c_0 = 1{\cdot}1$.
Below we summarize required technical conditions.

%

\renewcommand{\thecondition}{4}
\begin{condition}\label{ConditionPLAD} Assume that $\|\eta_0\|_0=s\geq 1$, $\Ep (\tilde x_{\ii j}^2) = 1$, $|\En (\tilde x_{\ii j}^2)-1| \leq 1/2$ for $j=1,\ldots,p$ with probability $1-o(1)$, the conditional density of $y_{i}$ given $\tilde x_{i}$, denoted by $f_{i}(\cdot)$, and its derivative are bounded by $\bar f$ and $\bar f'$, respectively, and $f_{i}(\tilde x_i^{\T} \eta_0) \geq \underline f>0$ is bounded away from zero.
\end{condition}

Condition \ref{ConditionPLAD} is implied by Condition \ref{Condition I} after a normalizing the variables so that $\Ep (\tilde x_{\ii j}^2) = 1$ for $j=1,\ldots,p$. The assumption on the conditional density is standard in the quantile regression literature even with fixed $p$ or $p$ increasing slower than $n$, see respectively \cite{koenker:book} and \cite{He:Shao}.


We present bounds on the population prediction norm of the $\ell_1$-penalized median regression estimator. The bounds depend on the restricted eigenvalue proposed in \cite{BickelRitovTsybakov2009}, defined by
\[
\bar{\kappa}_{c_{0}} =  \inf_{\delta \in \Delta_{c_{0}}} \| \tilde x^{\T} \delta \|_{P,2}/\|\delta_{\tilde{T}} \|,
\]
where $\tilde{T} = \supp (\eta_{0})$, $\Delta_{c_{0}} = \{ \delta \in \RR^{p+1} : \| \delta_{\tilde{T}^{c}} \|_{1} \leq 3c_{0}' \| \delta_{\tilde{T}} \|_{1} \}$ and  $\tilde{T}^{c} = \{ 1,\dots,p+1 \} \backslash \tilde{T}$.
The following lemma follows directly from the proof of Theorem 2 in \cite{BC-SparseQR} applied to a single quantile index.

\begin{lemma}
\label{Theorem:L1QRnp}
Under Condition \ref{ConditionPLAD} and using $\lambda =  c_{0} n\Lambda(1-\gamma\mid \tilde x) \lesssim [n\log\{(p\vee n)/\gamma\}]^{1/2}$, we have with probability at least $1-\gamma-o(1)$,
\[
\|\tilde x_\ii^{\T} (\hat \eta - \eta_0)\|_{P,2} \lesssim \frac{1}{\bar{\kappa}_{c_{0}}}\left[\frac{s\log\{(p\vee n)/\gamma\}}{n}\right]^{1/2},
\]
provided that
\[
\frac{n^{1/2}\bar{\kappa}_{c_{0}}}{[s\log\{(p\vee n)/\gamma\}]^{1/2}} \frac{\bar f \bar f'}{\underline{f}}\inf_{\delta\in\Delta_{c_{0}}} \frac{\| x^{\T}\delta \|_{P,2}^{3}}{\Ep (|\tilde x_\ii^{\T}\delta|^3)} \to \infty.
\]
\end{lemma}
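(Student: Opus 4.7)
The plan is to follow the standard analysis of $\ell_{1}$-penalized quantile regression as in \cite{BC-SparseQR}, specialized to the median quantile. First, I would establish the score-domination event $\mathcal{E} = \{ \lambda/n \geq 2 c_{0} \| \Psi^{-1} \En[\{1/2 - 1(y_{\ii} \leq \tilde x_{\ii}^{\T}\eta_{0})\}\tilde x_{\ii}]\|_{\infty}\}$. By construction of $\lambda = c_{0} n \Lambda(1-\gamma \mid \tilde x)$ via the pivotal rule (\ref{Eq:PenaltyL1QRPivotal}), together with the fact that, conditional on $\tilde x$, the variables $\varphi_{i} = 1/2 - 1(y_{i} \leq \tilde x_{i}^{\T}\eta_{0})$ have the same distribution as $1/2 - 1(U_{i} \leq 1/2)$ for i.i.d.\ uniform $U_{i}$, we get $\Pr(\mathcal{E}) \geq 1-\gamma$. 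Moreover, a standard maximal-inequality bound on the quantile of the symmetric sum yields $\lambda \lesssim [n\log\{(p\vee n)/\gamma\}]^{1/2}$.

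On the event $\mathcal{E}$, I would exploit convexity of the objective and the subgradient optimality condition to show a restricted cone property for $\hat\delta = \hat\eta - \eta_{0}$: writing the optimality inequality $\En|y_{\ii} - \tilde x_{\ii}^{\T}\hat\eta| - \En|y_{\ii} - \tilde x_{\ii}^{\T}\eta_{0}| \leq (\lambda/n)(\|\Psi\eta_{0}\|_{1} - \|\Psi\hat\eta\|_{1})$ and lower-bounding the left side by the linearized term $-\En\{\varphi(y_{\ii} - \tilde x_{\ii}^{\T}\eta_{0})\tilde x_{\ii}^{\T}\hat\delta\}$ (from convexity), the choice of $\lambda$ with factor $2c_{0}$ gives $\|\Psi\hat\delta_{\tilde T^{c}}\|_{1} \leq c_{0}'\|\Psi\hat\delta_{\tilde T}\|_{1}$, so (after using $|\En(\tilde x_{\ii j}^{2}) - 1| \leq 1/2$ from Condition \ref{ConditionPLAD} to transfer between $\Psi$-normalized and unnormalized cones) $\hat\delta \in \Delta_{c_{0}}$ up to absolute constants.

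The crux, and hardest part, is to convert the optimality inequality into a quadratic lower bound on $\|\tilde x_{\ii}^{\T}\hat\delta\|_{P,2}$. Using Knight's identity $|u - v| - |u| = -v\,\sign(u) + 2\int_{0}^{v}\{1(u \leq s) - 1(u \leq 0)\}ds$ applied to $u = \epsilon_{i}$ and $v = \tilde x_{i}^{\T}\hat\delta$, taking conditional expectation given $\tilde x_{i}$, and Taylor-expanding the distribution function $F_{i}(s) = \Pr(y_{i} \leq \tilde x_{i}^{\T}\eta_{0} + s \mid \tilde x_{i})$ around $s=0$, one obtains
\[
\Ep\{Q(\hat\eta) - Q(\eta_{0})\} \geq \underline f\,\|\tilde x_{\ii}^{\T}\hat\delta\|_{P,2}^{2} - (\bar f'/2)\Ep(|\tilde x_{\ii}^{\T}\hat\delta|^{3}).
\]
The quadratic term dominates the cubic remainder precisely under the growth condition $n^{1/2}\bar\kappa_{c_{0}}[s\log\{(p\vee n)/\gamma\}]^{-1/2}(\bar f\bar f'/\underline f)\inf_{\delta \in \Delta_{c_{0}}}\|\tilde x^{\T}\delta\|_{P,2}^{3}/\Ep(|\tilde x_{\ii}^{\T}\delta|^{3}) \to \infty$ stated in the lemma.

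Finally, I would combine this identification lower bound with an empirical-process upper bound on the centered objective $\En - \Ep$ evaluated at $\hat\eta$ minus at $\eta_{0}$, over the intersection of $\Delta_{c_{0}}$ with a sphere of radius to be determined. Bounding the corresponding empirical process by the symmetrization/maximal inequality applied to the class $\{\tilde x \mapsto \varphi(y - \tilde x^{\T}\eta)\tilde x^{\T}\delta : \delta \in \Delta_{c_{0}}\}$ delivers an $O_{P}([s\log\{(p\vee n)/\gamma\}/n]^{1/2}\|\tilde x_{\ii}^{\T}\hat\delta\|_{P,2})$ control using the restricted eigenvalue $\bar\kappa_{c_{0}}$ to convert the $\ell_{1}$ penalty term $(\lambda/n)\|\Psi\hat\delta_{\tilde T}\|_{1}$ into the prediction norm. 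Solving the resulting quadratic inequality in $\|\tilde x_{\ii}^{\T}\hat\delta\|_{P,2}$ yields the claimed rate $(1/\bar\kappa_{c_{0}})[s\log\{(p\vee n)/\gamma\}/n]^{1/2}$. The main obstacle is verifying that the cubic Knight remainder is negligible uniformly in $\delta \in \Delta_{c_{0}}$ on the relevant scale, which is exactly what the peculiar sparsity/design condition in the lemma's hypothesis ensures.
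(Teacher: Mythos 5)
Your proposal is correct and follows essentially the same route as the paper, whose own ``proof'' of this lemma is simply an appeal to the proof of Theorem 2 in \cite{BC-SparseQR} specialized to the median: the pivotal score-domination event for $\lambda$, the restricted-cone property $\hat\delta\in\Delta_{c_{0}}$ from the optimality condition, the quadratic-minus-cubic minorization via Knight's identity whose remainder is controlled exactly by the stated growth condition on $\inf_{\delta\in\Delta_{c_{0}}}\|\tilde x^{\T}\delta\|_{P,2}^{3}/\Ep(|\tilde x_{\ii}^{\T}\delta|^{3})$, and the empirical-process bound solved as a quadratic inequality in the prediction norm. Nothing further is needed.
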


Lemma \ref{Theorem:L1QRnp} establishes the rate of convergence in the population prediction norm for the $\ell_1$-penalized median regression estimator in a parametric setting. The extra growth condition required for identification is mild. For instance for many designs of interest we have $$\inf_{\delta\in\Delta_{c_{0}}} \| x^{\T} \delta \|_{P,2}^{3}/\Ep (|\tilde x_\ii^{\T}\delta|^3)$$  bounded away from zero as shown in \cite{BC-SparseQR}. For designs with bounded regressors we have
\[
\inf_{\delta\in\Delta_{c_{0}}} \frac{\| x^{\T}\delta \|_{P,2}^{3}}{\Ep (|\tilde x_\ii^{\T}\delta|^3)}  \geq \inf_{\delta \in \Delta_{c_{0}}}\frac{\| x^{\T}\delta \|_{P,2}}{ \|\delta\|_1 \tilde{K}_{n}} \geq \frac{\bar{\kappa}_{c_{0}}}{s^{1/2}(1+3c_{0}') \tilde{K}_{n}},
\]
where $\tilde{K}_{n}$ is a constant such that $\tilde{K}_{n} \geq \| \tilde x_{i} \|_{\infty}$ almost surely.
This leads to the extra growth condition that $\tilde{K}_{n}^{2} s^2 \log (p\vee n)= o( \bar{\kappa}_{c_{0}}^2 n)$.

In order to alleviate the bias introduced by the $\ell_1$-penalty, we can consider the associated post-model selection estimate associated with a selected support $\hat T$
 \begin{equation}
 \label{def:App:postl1qr}
\tilde \eta \in \arg\min_{\eta}  \left\{  \En ( |y_\ii  - \tilde x_\ii^{\T} \eta|)   : \supp(\eta) \subset \hat T\right\}. 
 \end{equation}
The following result characterizes the performance of the estimator in (\ref{def:App:postl1qr}); see Theorem 5 in \cite{BC-SparseQR} for the proof.

\begin{lemma}
\label{Thm:MainTwoStep}
Suppose that $\supp(\hat\eta)\subset \hat T$ and let $\widehat s= |\hat T|$. Then under the same conditions of Lemma \ref{Theorem:L1QRnp},
\[
\|\tilde x_\ii^{\T} (\tilde \eta - \eta_0)\|_{P,2} \lesssim_P \left\{ \frac{ (\hat s + s) \semax{\hat s+ s}
\log(n\vee p)}{n\bsemin{\hat s+s}}\right\}^{1/2} + \frac{1}{\bar{\kappa}_{c_{0}}}\left[\frac{s\log\{(p\vee n)/\gamma\}}{n}\right]^{1/2},
\]
provided that
\[
n^{1/2}\frac{\{\bsemin{\hat s+s}/\semax{\hat s+ s}\}^{1/2}\wedge \bar{\kappa}_{c_{0}}}{[s\log\{(p\vee n)/\gamma\}]^{1/2}} \frac{\bar f \bar f' }{\underline{f}}{\displaystyle \inf_{\|\delta\|_0 \leq \hat s + s}} \frac{\|\tilde x_\ii^{\T} \delta\|_{P,2}^{3}}{\Ep(|\tilde x_\ii^{\T} \delta|^3)} \to_P \infty.
\]
\end{lemma}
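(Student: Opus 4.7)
The plan is to analyze $\tilde\eta$ as a restricted median regression estimator on the support $\hat T$ of size $\hat s$, and to exploit the fact that $\delta := \tilde\eta-\eta_0$ satisfies $\|\delta\|_0 \leq \hat s + s$ since $\supp(\delta) \subset \hat T \cup \tilde T$. This reduces the problem to an M-estimation analysis on the $(\hat s+s)$-sparse cone, where the sparse eigenvalues $\bsemin{\hat s+s}$ and $\bsemax{\hat s+s}$ control the relation between the empirical prediction norm $\|\tilde x_\ii^{\T}\delta\|_{2,n}$, the population prediction norm $\|\tilde x_\ii^{\T}\delta\|_{P,2}$, and $\|\delta\|$.

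First I would write a Knight-type identity (or equivalently the convexity lemma for the check loss) to decompose $\En\{|y_\ii - \tilde x_\ii^{\T}\tilde\eta| - |y_\ii - \tilde x_\ii^{\T}\eta_0|\}$ into a linear empirical score term plus a nonlinear integrated-indicator term. Passing to the expectation and using Condition \ref{ConditionPLAD} together with a second-order Taylor expansion of the expected check loss in the style of \cite{BC-SparseQR}, the nonlinear part admits a lower bound of the form
\[
\tfrac{1}{4}\underline{f}\,\|\tilde x_\ii^{\T}\delta\|_{P,2}^2 \ - \ C\bar{f}'\,\Ep(|\tilde x_\ii^{\T}\delta|^3),
\]
which is the quadratic minorant; the growth condition
$\inf_{\|\delta\|_0 \leq \hat s+s} \|\tilde x_\ii^{\T}\delta\|_{P,2}^{3}/\Ep(|\tilde x_\ii^{\T}\delta|^3) \to_P \infty$
is exactly what is needed so that, on the range of values of $\|\tilde x_\ii^{\T}\delta\|_{P,2}$ of interest, the cubic term is dominated by the quadratic one.

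Second, for the linear score and the empirical-process fluctuation of the nonlinear term, I would apply a maximal inequality for the class
$\{w \mapsto |y - \tilde x^{\T}\eta| - |y - \tilde x^{\T}\eta_0| :  \|\eta-\eta_0\|_0 \leq \hat s+s\}$,
whose uniform entropy is controlled by $(\hat s+s)\log(p\vee n)$. Because the envelope and variance are bounded in terms of $\bsemax{\hat s+s}$, this produces a fluctuation of order $\{(\hat s+s)\bsemax{\hat s+s}\log(p\vee n)/n\}^{1/2}\cdot \|\tilde x_\ii^{\T}\delta\|_{P,2}$ (up to lower-order terms). Combining with the quadratic minorant and using the basic inequality $\En(|y_\ii-\tilde x_\ii^{\T}\tilde\eta|) \leq \En(|y_\ii-\tilde x_\ii^{\T}\eta_0^{(\hat T)}|)$ applied to the $\hat T$-projection of $\eta_0$, one obtains, after solving the resulting quadratic in $\|\tilde x_\ii^{\T}\delta\|_{P,2}$, the oracle-type rate $\{(\hat s+s)\bsemax{\hat s+s}\log(p\vee n)/(n\bsemin{\hat s+s})\}^{1/2}$.

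Third, since this argument only uses that $\tilde\eta$ is the unpenalized median regression minimizer on $\hat T \supset \supp(\hat\eta)$, the final bound cannot be worse than the guarantee already available for $\hat\eta$ itself, because the check loss at $\tilde\eta$ is no larger than at $\hat\eta$; this yields the additional additive term $\bar{\kappa}_{c_{0}}^{-1}\{s\log((p\vee n)/\gamma)/n\}^{1/2}$ inherited from Lemma \ref{Theorem:L1QRnp}. The main obstacle is the non-smoothness of the check loss: routine Taylor-based expansions are unavailable, so one must combine a careful Knight-style expansion with a sparse uniform entropy bound, and verify that the cubic remainder in the expected loss is absorbed by the quadratic term, which is precisely the role of the displayed identifiability condition in the statement.
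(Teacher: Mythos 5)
Your sketch follows essentially the same route as the proof the paper relies on (it cites Theorem 5 of Belloni and Chernozhukov, 2011, rather than giving its own argument): sparsity of $\tilde\eta-\eta_0$ over $\hat T\cup\supp(\eta_0)$, the basic inequality from optimality of $\tilde\eta$ on $\hat T$ compared against $\hat\eta$ and against the $\hat T$-restriction of $\eta_0$, a quadratic minorant of the expected check loss whose cubic remainder is absorbed via the displayed growth condition, a sparse-entropy maximal inequality, and then solving the resulting quadratic in the prediction norm. The only cosmetic imprecision is writing the population $\bsemax{\hat s+s}$ where the statement's first term uses the empirical $\semax{\hat s+s}$, with $\bsemin{\hat s+s}$ entering the denominator when converting $\|\delta\|$ into $\|\tilde x_{\ii}^{\T}\delta\|_{P,2}$; this does not affect the substance.
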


Lemma \ref{Thm:MainTwoStep} provides the rate of convergence in the prediction norm for the post model selection estimator despite possible imperfect model selection. The rates rely on the overall quality of the selected model, which is at least as good as the model selected by $\ell_1$-penalized median regression, and the overall number of components $\hat s$. Once again the extra growth condition required for identification is mild.  

\begin{remark}\label{Comment:Alpha}
In Step (i) of Algorithm 2 we use $\ell_1$-penalized median regression with $\tilde x_i = (d_i,x_i^{\T} )^{\T} $, $\hat \delta =\hat \eta-\eta_0= (\hat\alpha-\alpha_0,\hat \beta^{\T} -\beta_0^{\T} )^{\T} $,  and we are interested in rates for $\| x_\ii^{\T} (\hat \beta - \beta_0)\|_{P,2}$ instead of $\|\tilde x_\ii^{\T} \hat \delta\|_{P,2}$. However, it follows that
\[
\| x_\ii^{\T} (\hat \beta - \beta_0)\|_{P,2} \leq \|\tilde x_\ii^{\T} \hat\delta\|_{P,2} + |\hat \alpha - \alpha_0| \ \|d_\ii\|_{P,2}.
\]
Since $s \geq 1$, without loss of generality we can assume the component associated with the treatment $d_i$ belongs to $\tilde T$, at the cost of increasing the cardinality of $\tilde T$ by one which will not affect the rate of convergence. Therefore we have that
\[
|\hat \alpha - \alpha_0| \leq \|\hat \delta_{\tilde T}\| \leq \|\tilde x_\ii^{\T} \hat\delta\|_{P,2}/\bar{\kappa}_{c_{0}},
\]
provided that $\hat \delta \in \Delta_{c_{0}}$, which occurs with probability at least $1-\gamma$.
In most applications of interest $\|d_\ii\|_{P,2}$ and $1/\bar{\kappa}_{c_{0}}$ are bounded from above. Similarly, in Step (i) of Algorithm 1 we have that the post-$\ell_1$-penalized median regression estimator satisfies
\[
\| x_\ii^{\T} (\tilde \beta - \beta_0)\|_{P,2} \leq  \|\tilde x_\ii^{\T} \tilde\delta \|_{P,2}\left [  1 + \|d_\ii\|_{P,2}/\{\bsemin{\hat s+s}\}^{1/2}\right ].
\]
\end{remark}

\subsection{Heteroscedastic lasso}\label{Sec:EstLasso}

In this section we consider the equation (\ref{Eq:indirect}) of the form
\[
d_i = x_i^{\T} \theta_{0} + v_i, \  \Ep ( v_i \mid x_i ) = 0 \ \ \ (i=1,\ldots,n),
\]
where we observe $\{ (d_{i},x_{i}^{\T} )^{\T}  \}_{i=1}^{n}$ that are independent and identically distributed random vectors.
The unknown support of $\theta_{0}$ is denoted by $T_d$ and it satisfies $|T_d|\leq s$.
To estimate $\theta_{0}$, we compute
\begin{equation}
\label{EstLasso}
\hat \theta \in \arg \min_{\theta} \En \{  ( d_\ii - x_\ii^{\T} \theta)^2\} + \frac{\lambda}{n}\|\hat \Gamma \theta \|_1,
\end{equation}
where $\lambda$ and $\hat \Gamma$ are the associated penalty level and loadings which are potentially data-driven. We rely on the results of \cite{BellChenChernHans:nonGauss} on the performance of lasso and post-lasso estimators that allow for heteroscedasticity and non-Gaussianity. According to \cite{BellChenChernHans:nonGauss},
 we  use an initial and a refined option for the penalty level and the loadings, respectively
\begin{equation}
\label{choice of loadings2}
\begin{array}{llll}
 &   \hat \gamma_{j}  =   [\En \{ x^2_{\ii j} (d_\ii- \bar d )^2\}]^{1/2},  &  \lambda =  2cn^{1/2} \Phi^{-1}\{ 1- \gamma/(2p) \}, \\
&   \hat \gamma_{j}  = \{\En ( x^2_{\ii j} \widehat v^2_\ii )\}^{1/2},  & \lambda = 2cn^{1/2} \Phi^{-1}\{ 1- \gamma/(2p) \},
\end{array}
\end{equation}
for $j=1,\ldots,p$, where  $c>1$ is a fixed constant, $\gamma \in (1/n,1/\log n)$, $\bar d= \En (d_\ii)$ and $\hat v_i$ is an estimate of $v_i$ based on lasso with the initial option or iterations.



We make the following high-level conditions. Below $c_{1},C_{1}$ are given positive constants, and $\ell_{n} \uparrow \infty$ is a given sequence of constants.

\renewcommand{\thecondition}{5}
\begin{condition}\label{ConditionHL} Suppose that (i) there exists $s= s_{n}\geq 1$  such that  $\|\theta_{0}\|_0 \leq s$.
\ (ii) $\Ep ( d^{2} ) \leq C_{1}, \min_{j=1,\ldots,p} \Ep (x_{\ii j}^2) \geq c_{1}$,  $\Ep ( v^{2} \mid x) \geq c_{1}$ almost surely, and $\max_{j=1,\ldots,p} \Ep(|x_{\ii j}d_\ii|^2) \leq C_{1}$. \ (iii) $\max_{j=1,\ldots,p} \{\Ep(|x_{\ii j}v_\ii|^3)\}^{1/3}\log^{1/2} (n \vee p)= o(n^{1/6})$.
(iv) 
With probability $1-o(1)$,
$\max_{j=1,\ldots,p} |\En(x_{\ii j}^2v_\ii^2) - \Ep (x_{\ii j}^2v_\ii^2)| \vee \max_{j=1,\ldots,p} |\En(x_{\ii j}^2d_\ii^2) - \Ep(x_{\ii j}^2d_\ii^2)| = o(1)$ and $\max_{i=1,\ldots,n} \|x_i\|_\infty^2  s\log(n\vee p) =o(n)$. (v) With probability $1-o(1)$, $c_{1} \leq \phi_{\min}^{x}(\ell_n s) \leq  \phi_{\max}^{x}(\ell_n s) \leq C_{1}$.
\end{condition}

Condition \ref{ConditionHL} (i) implies Condition AS in \cite{BellChenChernHans:nonGauss}, while Conditions \ref{ConditionHL} (ii)-(iv) imply Condition RF in \cite{BellChenChernHans:nonGauss}.  Lemma 3 in \cite{BellChenChernHans:nonGauss} provides primitive sufficient conditions under which condition (iv) is satisfied. The condition on the sparse eigenvalues ensures that $\kappa_{\bar{C}}$ in Theorem 1 of \cite{BellChenChernHans:nonGauss}, applied to this setting, is bounded away from zero with probability $1-o(1)$; see Lemma 4.1 in \cite{BickelRitovTsybakov2009}.

 Next we summarize results on the performance of the estimators generated by lasso.

\begin{lemma}
\label{Thm:RateEstimatedLasso}
Suppose that Condition \ref{ConditionHL} is satisfied. Setting $\lambda = 2cn^{1/2}\Phi^{-1}\{ 1-\gamma/(2p) \}$ for $c>1$, and using the penalty loadings as in (\ref{choice of loadings2}), we have with probability $1-o(1)$,
\[
\|x_\ii^{\T} (\hat \theta - \theta_{0})\|_{2,n} \lesssim \frac{\lambda s^{1/2}}{n}.
\]
\end{lemma}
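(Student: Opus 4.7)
The plan is to apply Theorem 1 of \cite{BellChenChernHans:nonGauss} to our setting, so the work reduces to verifying its three main ingredients in the presence of Condition \ref{ConditionHL}. Write $\hat\delta = \hat\theta - \theta_0$ and $T_d = \supp(\theta_0)$. The first-order optimality of $\hat\theta$ combined with $d_i = x_i^{\T}\theta_0 + v_i$ gives the basic inequality
\[
\|x_\ii^{\T}\hat\delta\|_{2,n}^{2} \leq 2 \En(v_\ii x_\ii^{\T} \hat\delta) + \tfrac{\lambda}{n}\bigl(\|\hat\Gamma\theta_0\|_1 - \|\hat\Gamma\hat\theta\|_1\bigr),
\]
so everything is driven by the size of the score $\|\hat\Gamma^{-1}\En(x_\ii v_\ii)\|_\infty$ relative to $\lambda/n$.

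First I would establish the \emph{regularization event} that $\lambda/n \geq 2c_0'\|\hat\Gamma^{-1}\En(x_\ii v_\ii)\|_\infty$ with probability $1-\gamma-o(1)$ for some $c_0' \in (1,c)$. The key tool is a moderate deviation inequality for self-normalized sums (\cite{delapena}) applied to $S_j = \sum_i x_{\ii j} v_\ii / \{\sum_i x_{\ii j}^2 v_\ii^2\}^{1/2}$; the third-moment hypothesis in Condition \ref{ConditionHL} (iii) guarantees that the Gaussian tail $\Phi^{-1}\{1-\gamma/(2p)\}$ provides a valid bound after a union bound over $j \leq p$. Second, I would control the penalty loadings, showing $\max_j |\hat\gamma_j/\gamma_j^{0} - 1| = o(1)$ where $\gamma_j^{0} = \{\Ep(x_{\ii j}^2 v_\ii^2)\}^{1/2}$. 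For the initial option, $\hat\gamma_j^2 = \En\{x_{\ii j}^2(d_\ii - \bar d)^2\}$ concentrates on $\Ep(x_{\ii j}^2 d_\ii^2)$ by Condition \ref{ConditionHL} (iv); since $\Ep(x_{\ii j}^2 v_\ii^2) \leq \Ep(x_{\ii j}^2 d_\ii^2)$ under the conditional moment bound $\Ep(v^2\mid x)\leq C_1$, this yields a valid upper envelope with the loss of a bounded multiplicative constant, absorbed into $c$. For the refined option, the initial-option lasso residuals $\hat v_i$ approximate $v_i$ in the $\|\cdot\|_{2,n}$ sense at rate $\lambda s^{1/2}/n = o(1)$, and combining this with Condition \ref{ConditionHL} (iv) gives $\En(x_{\ii j}^2 \hat v_\ii^2) \to_P \Ep(x_{\ii j}^2 v_\ii^2)$ uniformly.

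Third, on the regularization event I would derive the \emph{restricted set condition}. Plugging the score bound into the basic inequality and using $\|\hat\Gamma\theta_0\|_1 - \|\hat\Gamma\hat\theta\|_1 \leq \|\hat\Gamma_{T_d}\hat\delta_{T_d}\|_1 - \|\hat\Gamma_{T_d^c}\hat\delta_{T_d^c}\|_1$ yields, after absorbing constants from step 2,
\[
\|\hat\Gamma_{T_d^c}\hat\delta_{T_d^c}\|_1 \leq \bar c\,\|\hat\Gamma_{T_d}\hat\delta_{T_d}\|_1
\]
for some finite $\bar c$, so $\hat\delta$ lies in a cone of the Bickel--Ritov--Tsybakov type. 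Condition \ref{ConditionHL} (v) on sparse eigenvalues then implies, via Lemma 4.1 of \cite{BickelRitovTsybakov2009}, that the empirical restricted eigenvalue $\kappa_{\bar c}$ is bounded below with probability $1-o(1)$, giving $\|\hat\delta_{T_d}\| \leq \kappa_{\bar c}^{-1}\|x_\ii^{\T}\hat\delta\|_{2,n}$.

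Finally, combining the basic inequality with these ingredients yields
\[
\|x_\ii^{\T}\hat\delta\|_{2,n}^2 \leq \tfrac{\lambda}{n}(1 + 2/c)\|\hat\Gamma_{T_d}\hat\delta_{T_d}\|_1 \lesssim \tfrac{\lambda}{n}\, s^{1/2}\|x_\ii^{\T}\hat\delta\|_{2,n},
\]
which upon dividing gives the asserted rate $\|x_\ii^{\T}\hat\delta\|_{2,n} \lesssim \lambda s^{1/2}/n$. The main obstacle in the plan is step two for the \emph{refined} option, since it requires first running the argument with the cruder initial loadings to obtain a preliminary rate, then propagating that rate through $\hat v_i$ to upgrade the loadings; this circularity is what forces the two-stage penalty construction in \cite{BellChenChernHans:nonGauss} and is the step where Condition \ref{ConditionHL} (iv) on uniform concentration of $\max_j |\En(x_{\ii j}^2 v_\ii^2) - \Ep(x_{\ii j}^2 v_\ii^2)|$ and the regressor boundedness $\max_i \|x_i\|_\infty^2 s\log(n\vee p) = o(n)$ are essential.
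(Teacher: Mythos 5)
Your proposal is correct and takes essentially the same route as the paper: the paper's own "proof" of this lemma is simply to verify that Condition 5 (i) implies Condition AS and Conditions 5 (ii)--(iv) imply Condition RF of \cite{BellChenChernHans:nonGauss}, with Condition 5 (v) plus Lemma 4.1 of \cite{BickelRitovTsybakov2009} bounding the restricted eigenvalue, and then invoke Theorem 1 of that reference. What you have written is a faithful unpacking of the internals of that cited theorem (basic inequality, self-normalized moderate deviations for the regularization event, two-stage loading control, cone condition), so it matches the paper's argument in substance while supplying more detail than the paper itself records.
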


Associated with lasso we can define the post-lasso estimator as
\[
 \tilde \theta \in \arg\min_{\theta} \left\{  \En \{ (d_\ii - x_\ii^{\T} \theta)^2 \}  :  \supp(\theta)\subset\supp(\hat\theta)\right\}. 
 \]
That is, the post-lasso estimator is simply the least squares estimator applied to the regressors selected by lasso in (\ref{EstLasso}). Sparsity properties of the lasso estimator $\hat \theta$ under estimated weights follows similarly to the standard lasso analysis derived in \cite{BellChenChernHans:nonGauss}. By combining such sparsity properties and the rates in the prediction norm, we can establish rates for the post-model selection estimator under estimated weights. The following result summarizes the properties of the post-lasso estimator.

\begin{lemma}
\label{corollary3:postrate}
Suppose that Condition \ref{ConditionHL} is satisfied. Consider the lasso estimator
with penalty level and loadings specified as in Lemma \ref{Thm:RateEstimatedLasso}.
Then the data-dependent model $\widehat T_d$ selected by the lasso estimator $\hat \theta$ satisfies with probability $1-o(1)$:
\[
\|\tilde \theta \|_0 =  | \widehat T_d | \lesssim s.
\]
Moreover, the post-lasso estimator obeys
\[
 \| x_\ii^{\T} (\tilde \theta -\theta_{0})\|_{2,n} \lesssim_P \left\{ \frac{ s \log (p \vee n)}{n} \right\}^{1/2}.
\]
 \end{lemma}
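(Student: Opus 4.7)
The plan is to establish the two conclusions of the lemma in turn: first the sparsity bound $|\widehat T_d|\lesssim s$ with probability $1-o(1)$, and then the post-lasso prediction-norm rate, which follows by a standard projection argument once sparsity is under control.

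For sparsity, write $\hat s = |\widehat T_d|$. The KKT conditions for (\ref{EstLasso}) give $2|\En\{x_{\ii j}(d_\ii - x_\ii^\T \hat\theta)\}| = \lambda\hat\gamma_j/n$ for every $j \in \widehat T_d$. Summing squares and taking the square root yields
\[
\biggl\{ \sum_{j\in \widehat T_d} [\En\{x_{\ii j}(d_\ii - x_\ii^\T \hat\theta)\}]^2 \biggr\}^{1/2} \geq \lambda \hat s^{1/2}\min_j \hat\gamma_j /(2n).
\]
Decomposing $d_\ii - x_\ii^\T \hat\theta = v_\ii + x_\ii^\T(\theta_0 - \hat\theta)$ and invoking the triangle inequality together with the variational characterization of sparse eigenvalues, the left side is at most
\[
\hat s^{1/2}\|\En(x_\ii v_\ii)\|_\infty + \{\phi_{\max}^x(\hat s)\}^{1/2}\|x_\ii^\T(\hat\theta-\theta_0)\|_{2,n}.
\]
On the penalty-majorization event implied by the choice (\ref{choice of loadings2}) one has $\|\En(x_\ii v_\ii)\|_\infty \leq \lambda \max_j\hat\gamma_j/(cn)$. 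Combining this with $\|x_\ii^\T(\hat\theta-\theta_0)\|_{2,n}\lesssim_P \lambda s^{1/2}/n$ from Lemma \ref{Thm:RateEstimatedLasso}, and with $\max_j\hat\gamma_j/\min_j\hat\gamma_j = O_P(1)$ under Condition \ref{ConditionHL} (iv), rearranging gives $\hat s\lesssim \phi_{\max}^x(\hat s)\,s$. A sub-linearity argument based on the bound $\phi_{\max}^x(\ell_n s)\lesssim 1$ from Condition \ref{ConditionHL} (v) then yields $\hat s \lesssim s$ with probability $1-o(1)$.

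For the post-lasso rate, the OLS first-order conditions give $\En\{x_{\ii j}(d_\ii - x_\ii^\T \tilde\theta)\} = 0$ for every $j\in\widehat T_d$, i.e.\ empirical orthogonality of the residuals to $\mathrm{span}\{ x_{\ii j}: j\in\widehat T_d\}$. Writing $P_{\widehat T_d}$ for the corresponding empirical projection and using that $\tilde\theta$ vanishes outside $\widehat T_d$, the identity $d_\ii = x_\ii^\T \theta_0 + v_\ii$ yields the Pythagorean decomposition
\[
\|x_\ii^\T(\tilde\theta-\theta_0)\|_{2,n}^2 = \|P_{\widehat T_d} v_\ii\|_{2,n}^2 + \|(I - P_{\widehat T_d})\,x_{\ii,T_d\setminus\widehat T_d}^\T\theta_{0,T_d\setminus\widehat T_d}\|_{2,n}^2.
\]
The first term is at most $\hat s\,\{\phi_{\min}^x(\hat s)\}^{-1}\|\En(x_\ii v_\ii)\|_\infty^2 = O_P(s\log(p\vee n)/n)$ via a maximal inequality for $\|\En(x_\ii v_\ii)\|_\infty$ together with Condition \ref{ConditionHL} (v). The second term is dominated by $\|x_{\ii,T_d\setminus\widehat T_d}^\T\theta_{0,T_d\setminus\widehat T_d}\|_{2,n}^2 \leq \|x_\ii^\T(\theta_0-\hat\theta)\|_{2,n}^2 = O_P(\lambda^2 s/n^2) = O_P(s\log(p\vee n)/n)$ by Lemma \ref{Thm:RateEstimatedLasso} and the definition of $\lambda$, using that $\hat\theta$ is zero on $T_d\setminus\widehat T_d$. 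Adding the bounds yields the claim.

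The main obstacle is the sparsity step, and specifically showing that $\max_j\hat\gamma_j/\min_j\hat\gamma_j$ remains stochastically bounded when the loadings are built from lasso residuals $\hat v_\ii$ rather than from the unobservable $v_\ii$. This requires propagating the rate of Lemma \ref{Thm:RateEstimatedLasso} to conclude $\max_j|\En(x_{\ii j}^2 \hat v_\ii^2) - \En(x_{\ii j}^2 v_\ii^2)| = o_P(1)$, after which Condition \ref{ConditionHL} (iv) supplies the uniform control. Once the loadings are controlled, both the KKT contraction and the sub-linearity step are routine.
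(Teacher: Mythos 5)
The paper itself gives no self-contained proof of this lemma: it defers both the sparsity bound and the post-lasso rate to the analysis of the heteroscedastic lasso in Belloni, Chen, Chernozhukov and Hansen (2012). Your reconstruction follows exactly the architecture of that cited argument (KKT conditions plus sub-linearity of sparse eigenvalues for sparsity; a projection/Pythagoras decomposition for the post-lasso rate), so the strategy is the intended one. However, two of your individual steps do not hold as written.

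First, in the sparsity step, after summing the KKT identities you pull $\min_j \hat\gamma_j$ out of the left-hand sum and bound the noise term by $\hat s^{1/2}\lambda \max_j\hat\gamma_j/(cn)$, and then rearrange. The resulting coefficient of $\lambda \hat s^{1/2}/n$ is (up to the factor of $2$ you dropped) $\min_j\hat\gamma_j - c^{-1}\max_j\hat\gamma_j$, which is positive only if $\max_j\hat\gamma_j/\min_j\hat\gamma_j < c$; merely knowing the ratio is $O_P(1)$ does not suffice, and with $c$ close to $1$ the conditions do not force the loadings to be that homogeneous. The standard fix is to keep the loadings inside the sums and compare term by term: the majorization event gives $\sum_{j\in\widehat T_d}[\En(x_{\ii j}v_\ii)]^2 \leq c^{-2}\sum_{j\in\widehat T_d}\{\lambda\hat\gamma_j/(2n)\}^2$, so the triangle inequality yields the factor $(1-1/c)\{\sum_{j\in\widehat T_d}\hat\gamma_j^2\}^{1/2} \geq (1-1/c)\hat s^{1/2}\min_j\hat\gamma_j$, with $\min_j\hat\gamma_j$ bounded away from zero by Condition \ref{ConditionHL} (ii) and (iv). Second, in the post-lasso step you bound the approximation term by claiming $\|x_{\ii,T_d\setminus\widehat T_d}^{\T}\theta_{0,T_d\setminus\widehat T_d}\|_{2,n} \leq \|x_\ii^{\T}(\theta_0-\hat\theta)\|_{2,n}$. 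Restricting a coefficient vector to a subset of coordinates can \emph{increase} the empirical prediction norm (the omitted coordinates may cancel the retained ones), so this inequality is false in general. The correct route passes through the Euclidean norm of the coefficient sub-vector and the sparse eigenvalues of Condition \ref{ConditionHL} (v): $\|x_{\ii,A}^{\T}\delta_A\|_{2,n}^2 \leq \phi^{x}_{\max}(|A|)\|\delta_A\|^2 \leq \phi^{x}_{\max}(|A|)\|\delta\|^2$ and $\|\delta\|^2 \leq \|x_\ii^{\T}\delta\|_{2,n}^2/\phi^{x}_{\min}(\|\delta\|_0)$ applied to $\delta = \theta_0-\hat\theta$, which is $(\hat s+s)$-sparse; this recovers the same $O_P(s\log(p\vee n)/n)$ bound. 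Both repairs use only tools already assumed, so your proof is salvageable, but as written those two inequalities would fail.
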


%

\section{Proofs for Section 2}

\label{sec: proofs for Section 2}

\subsection{Proof of Theorem \ref{theorem:inferenceAlg1}}
The proof of Theorem \ref{theorem:inferenceAlg1} consists of verifying Conditions 2 and 3 and application of Theorem \ref{theorem2}. We will use the properties of the post-$\ell_1$-penalized median regression and the post-lasso estimator together with required regularity conditions stated in Section \ref{Sec:AnalysisAux} of this Supplementary Material.
Moreover, we will use Lemmas \ref{cor:LoadingConvergence} and \ref{Lemma:EquivNorm} stated in Section \ref{Sec:Auxiliary} of this Supplementary Material. 
In this proof we focus on Algorithm 1. The proof for Algorithm 2 is essentially the same as that for Algorithm 1 and deferred to the next subsection.


In application of Theorem \ref{theorem2}, take $p_1=1, z=x, w=(y,d,x^{\T})^{\T}, M=2, \psi(w,\alpha,t) = \{ 1/2 - 1(y\leq \alpha d + t_1)\}(d-t_2), h(z) = (x^{\T} \beta_0,x^{\T}\theta_0)^{\T} = \{g(x),m(x)\}^{\T} = h(x)$, 
$\A = [ \alpha_{0} - c_{2}, \alpha_{0} + c_{2} ]$ where $c_{2}$ will be specified later, and $\mathcal{T} = \RR^{2}$, we omit the subindex ``$j$.'' In what follows, we will separately verify Conditions \ref{condition: SP} and \ref{condition: AS}.

\medskip

Verification of Condition \ref{condition: SP}: Part (i). The first condition follows from the zero median condition, that is, $F_{\epsilon}(0)=1/2$.
We will show in verification of Condition \ref{condition: AS} that with probability $1-o(1)$, $| \hat{\alpha} - \alpha_{0} | = o(1/\log n)$, so that for some sufficiently small $c > 0$, $[ \alpha_{0} \pm c/\log n] \subset \hat{\A} \subset \A$,
with probability $1-o(1)$.

Part (ii).  The map
\[
(\alpha,t) \mapsto \Ep\{\psi(w,\alpha,t) \mid x \} = \Ep([ 1/2 - F_{\epsilon} \{ (\alpha - \alpha_{0}) d + t_1 - g(x) \} ] (d-t_2) \mid x)
\]
is twice continuously differentiable since $f_{\epsilon}'$ is continuous. For every $\nu \in \{ \alpha, t_{1},t_{2} \}$, $\partial_{\nu} \Ep\{\psi(w,\alpha,t)\mid x \}$ is $-\Ep [ f_{\epsilon} \{ (\alpha - \alpha_{0}) d + t_1 - g(x) \} d (d-t_{2}) \mid x ]$ or $-\Ep [ f_{\epsilon}  \{ (\alpha - \alpha_{0}) d + t_1 - g(x) \} (d-t_{2}) \mid x ]$ or $\Ep [ F_{\epsilon} \{ (\alpha - \alpha_{0}) d + t_1 - g(x) \} \mid x ]$.
Hence for every $\alpha \in \A$,
\[
| \partial_{\nu} \Ep[ \psi \{w,\alpha,h(x) \} \mid x ] | \leq C_{1} \Ep ( |d v| \mid x ) \vee C_{1} \Ep ( |v| \mid x ) \vee 1.
\]
The expectation of the square of the right side is bounded by a constant depending only on $c_{3},C_{1}$, as $\Ep (d^{4}) + \Ep (v^{4}) \leq C_{1}$.
Moreover, let $\mathcal{T}(x) = \{ t \in \RR^{2} : | t_{2}-m(x) | \leq c_{3} \}$ with any fixed  constant $c_{3} > 0$. Then for every $\nu, \nu' \in \{ \alpha,t,t' \}$, whenever $\alpha \in \A, t \in \mathcal{T}(x)$,
\begin{align*}
&| \partial_{\nu} \partial_{\nu'} \Ep\{\psi(w,\alpha,t)\mid x \} | \\
& \leq C_{1} \left [ 1 \vee \Ep \{ | d^{2}(d-t_{2})| \mid x \} \vee \Ep \{ | d(d-t_{2})| \mid x \} \vee \Ep (|d| \mid x) \vee \Ep (|d-t_{2}| \mid x) \right ].
\end{align*}
Since $d = m(x) + v, | m(x) | = | x^{\T} \theta_{0} | \leq M_{n}, | t_{2} - m(x)| \leq c_{3}$ for $t \in \mathcal{T}(x)$, and $\Ep ( |v|^{3} \mid x ) \leq C_{1}$, we have
\begin{align*}
&\Ep \{ | d^{2}(d-t_{2})| \mid x \} \leq \Ep [ \{ m(x) + v \}^{2} (c_{3} + |v|) \mid  x ] \leq 2 \Ep [ \{ m^{2}(x) + v^{2} \} (c_{3} + |v|) \mid x] \\
&\quad \leq 2 \Ep \{ (M_{n}^{2}+v^{2})(c_{3} + |v|) \mid x \} \lesssim M_{n}^{2}.
\end{align*}
Similar computations lead to $| \partial_{\nu} \partial_{\nu'} \Ep\{\psi(w,\alpha,t)\mid x \} | \leq CM^{2}_{n} = L_{1n}$ 
for some constant $C$ depending only on $c_{3},C_{1}$.
We wish to verify the last condition in (ii). For every $\alpha,\alpha' \in \A, t, t' \in \mathcal{T}(x)$,
\begin{align*}
&\Ep [\{ \psi(w,\alpha,t) - \psi(w,\alpha',t') \}^{2} \mid x] \leq C_{1} \Ep \{ | d (d-t_{2}) | \mid x \} | \alpha - \alpha' | \\
&\quad + C_{1} \Ep \{ | (d-t_{2}) | \mid x \} | t_{1} - t_{1}' | + (t_{2}-t_{2}')^{2} \leq C'M_{n} ( |\alpha-\alpha'| + |t_{1}-t_{1}'|) + (t_{2}-t_{2}')^{2},
\end{align*}
where $C'$ is a constant depending only on $c_{3},C_{1}$.
Here as  $|t_{2}-t_{2}'| \leq | t_{2} - m(x) | + | m(x)-t_{2} | \leq 2c_{3}$, the right side is bounded by $2^{1/2}(C'M_{n}+2c_{3})(|\alpha-\alpha'| + \|t-t'\|)$. Hence we can take $L_{2n} = 2^{1/2}(C'M_{n}+2c_{3})$ and $\varsigma = 1$.

Part (iii). Recall that $d = x^{\T} \theta_0 + v, \Ep (v \mid x) =0$. Then we have
\begin{align*}
&\partial_{t_1} \Ep\{ \psi(w,\alpha_0,t)\mid x\} |_{t=h(x)}= \Ep\{ f_\epsilon(0) v\mid x\}=0,  \\
&\partial_{t_2} \Ep\{ \psi(w,\alpha_0,t)\mid x\} |_{t=h(x)}= -\Ep\{F_\epsilon(0)-1/2 \mid x\}=0.
\end{align*}

Part (iv). Pick any $\alpha \in \A$. There exists $\alpha'$ between $\alpha_{0}$ and $\alpha$ such that
\begin{align*}
\Ep [ \psi\{w,\alpha,h(x)\} ] = \partial_{\alpha} \Ep [ \psi\{w,\alpha_{0},h(x)\} ](\alpha-\alpha_{0}) + \frac{1}{2} \partial_{\alpha}^{2} \Ep [ \psi\{w,\alpha',h(x)\} ](\alpha-\alpha_{0})^{2}
\end{align*}
Let $\Gamma = \partial_{\alpha} \Ep [ \psi\{w,\alpha_{0},h(x)\} ] = f_{\epsilon}(0)\Ep (v^{2}) \geq c_{1}^{2}$. Then since $| \partial_{\alpha}^{2} \Ep [\psi\{w,\alpha',h(x)\} ] | \leq C_{1} \Ep ( | d^{2} v| ) \leq C_{2}$ 
where $C_{2}$ can be taken depending only on $C_{1}$, we have
\[
\Ep[\psi\{w,\alpha,h(x)\}] \geq \frac{1}{2} \Gamma | \alpha-\alpha_{0} |,
\]
whenever $|\alpha-\alpha_0| \leq c_{1}^{2}/C_{2}$. Take $c_{2} = c_{1}^{2}/C_{2}$ in the definition of $\A$. Then the above inequality holds for all $\alpha \in \A$.

Part (v). Observe that $\Ep [ \psi^{2} \{w,\alpha_{0},h(x) \} ]=(1/4)\Ep (v^{2}) \geq c_{1}/4$.

\medskip

Verification of Condition \ref{condition: AS}: Note here that $a_{n} = p \vee n$ and $b_{n}=1$. We first show that the estimators $\hat h(x)=(x^{\T}\tilde\beta, x^{\T}\tilde\theta)^{\T}$ are sparse and have good rate properties.

The estimator $\tilde \beta$ is based on post-$\ell_1$-penalized median regression with penalty parameters as suggested in Section \ref{Sec:Step1} of this Supplementary Material. By assumption in Theorem \ref{theorem:inferenceAlg1}, with probability $1-\Delta_n$ we have $\hat s = \|\tilde \beta\|_0 \leq C_{1} s$. Next we verify that Condition \ref{ConditionPLAD} in Section \ref{Sec:Step1} of this Supplementary Material is implied by Condition \ref{Condition I} and invoke Lemmas \ref{Theorem:L1QRnp} and \ref{Thm:MainTwoStep}. The assumptions on the error density $f_{\epsilon}(\cdot)$ in Condition \ref{ConditionPLAD} (i) are assumed in Condition \ref{Condition I} (iv). Because of Conditions \ref{Condition I} (v) and (vi), $\bar{\kappa}_{c_{0}}$ is bounded away from zero for $n$ sufficiently large, see Lemma 4.1 in \cite{BickelRitovTsybakov2009}, and $c_{1} \leq \bsemin{1}\leq \Ep(\tilde x_j^2)\leq \bsemax{1}\leq C_{1}$ for every $j=1,\ldots,p$. Moreover, under Condition \ref{Condition I},
by Lemma \ref{Lemma:EquivNorm}, we have $\max_{j=1,\ldots,p+1} | \En (\tilde{x}_{j}^{2})/\Ep (\tilde{x}_{j}^{2}) - 1 | \leq 1/2$ and  $\semax{\ell_n' s}\leq 2 \En (d^{2}) + 2 \phi^{x}_{\max} (\ell_{n}' s) \leq 5C_{1}$ with probability $1-o(1)$ for some $\ell_n'\to\infty$.
 The required side condition of Lemma \ref{Theorem:L1QRnp} is satisfied by relations (\ref{VerificationLemma4a}) and (\ref{VerificationLemma4b}) ahead.  By Lemma \ref{Thm:MainTwoStep} in Section \ref{Sec:Step1} of this Supplementary Material, we have
$\|x_\ii^{\T} (\tilde \beta-\beta_0)\|_{P,2} \lesssim_P \{s\log(n\vee p)/n\}^{1/2}$ since the required side condition holds. Indeed, for  $\tilde x_i = (d_i,x_i^{\T} )^{\T} $ and $\delta = (\delta_d,\delta_x^{\T} )^{\T} $,
because $\|\tilde \beta\|_0\leq C_{1}s$ with probability $1-\Delta_n$, $c_{1} \leq \bsemin{C_{1} s +s } \leq \bsemax{C_{1} s + s} \leq C_{1}$,  and  $\Ep (|d_\ii|^3) =O(1)$, we have
\begin{equation*}
\begin{array}{rl}
{\displaystyle \inf_{\|\delta\|_0\leq s+C_{1} s}}\frac{\|\tilde x_\ii^{\T} \delta\|_{P,2}^{3}}{\Ep (|\tilde x_\ii^{\T} \delta|^3)} & \geq {\displaystyle \inf_{\|\delta\|_0\leq s+C_{1}s}}  \frac{\{\bsemin{s+C_{1}s}\}^{3/2}\|\delta\|^3}{4\Ep (|x_\ii^{\T}\delta_x|^3)+4|\delta_d|^3\Ep(|d_\ii|^3)}\\
& \geq {\displaystyle \inf_{\|\delta\|_0\leq s+C_{1}s}} \frac{\{\bsemin{s+C_{1}s}\}^{3/2}\|\delta\|^3}{4K_n\|\delta_x\|_1\bsemax{s+C_{1}s}\|\delta_x\|^2+4\|\delta\|^3\Ep (|d_\ii|^3)}\\
&\geq \frac{\{\bsemin{s+C_{1}s}\}^{3/2}}{4K_n\{s+C_{1}s\}^{1/2}\bsemax{s+C_{1}s}+4\Ep(|d_\ii|^3)} \gtrsim \frac{1}{K_ns^{1/2}}.
\end{array}
\end{equation*}
Therefore, since  $K_n^2s^2\log^2(p\vee n) = o(n)$, we have
\begin{equation*}
\begin{array}{rl}
n^{1/2}\frac{\{\bsemin{s+C_{1}s}/\semax{s+C_{1}s}\}^{1/2}\wedge \bar{\kappa}_{c_{0}}}{\{s\log(p\vee n)\}^{1/2}}{\displaystyle \inf_{\|\delta\|_0\leq s+C_{1}s}}\frac{\|\tilde x_\ii^{\T} \delta\|_{P,2}^{3}}{\Ep(|\tilde x_\ii^{\T} \delta|^3)} &\gtrsim \frac{n^{1/2}}{K_ns\log (p\vee n)}\to \infty.
\end{array}
\end{equation*}
The argument above also shows that $|\hat \alpha - \alpha_0|= o(1/\log n)$ with probability $1-o(1)$ as claimed in Verification of Condition \ref{condition: SP} (i). Indeed by Lemma \ref{Theorem:L1QRnp} and Remark \ref{Comment:Alpha}  we have $|\hat\alpha -\alpha_0|\lesssim \{s\log(p\vee n)/n\}^{1/2} = o(1/\log n)$ with probability $1-o(1)$ as $s^2 \log^3(p\vee n) = o(n)$. 

The $\tilde \theta$ is a post-lasso estimator with penalty parameters as suggested in Section \ref{Sec:EstLasso} of this Supplementary Material. We verify that Condition \ref{ConditionHL} in Section \ref{Sec:EstLasso} of this Supplementary Material is implied by Condition \ref{Condition I} and invoke Lemma \ref{corollary3:postrate}. Indeed,
Condition \ref{ConditionHL} (ii) is implied by Conditions \ref{Condition I} (ii) and (iv), where Condition \ref{Condition I}(iv) is used to ensure $\min_{j=1,\ldots,p} \Ep (x_{j}^{2}) \geq c_{1}$. Next  since $\max_{j=1,\ldots,p}\Ep(|x_jv|^3) \leq C_{1} $, Condition \ref{ConditionHL} (iii)  is satisfied if $\log^{1/2}(p\vee n) = o(n^{1/6})$,
which is implied  by Condition \ref{Condition I} (v).  Condition \ref{ConditionHL} (iv) follows from Lemma \ref{cor:LoadingConvergence} applied twice with $\zeta_i=v_i$ and $\zeta_i=d_i$ as $K_n^4 \log p = o(n)$ and $K_n^2 s\log(p\vee n) = o(n)$. Condition \ref{ConditionHL} (v) follows from Lemma \ref{Lemma:EquivNorm}. By Lemma \ref{corollary3:postrate} in Section \ref{Sec:EstLasso} of this Supplementary Material, we have $\|x_\ii^{\T} (\widetilde \theta-\theta_{0})\|_{2,n} \lesssim_P \{s\log(n\vee p)/n\}^{1/2}$ and $\|\widetilde \theta\|_0 \lesssim s$ with probability $1-o(1)$. Thus, by Lemma \ref{Lemma:EquivNorm}, we have $\|x_\ii^{\T} (\widetilde \theta-\theta_{0})\|_{P,2} \lesssim_P \{s\log(n\vee p)/n\}^{1/2}$.
Moreover, $\sup_{\| x \|_{\infty} \leq K_{n}} | x_\ii^{\T} (\tilde \theta-\theta_{0}) | \leq K_{n} \| \tilde \theta-\theta_{0} \|_{1} \leq K_{n} s^{1/2} \| \tilde \theta-\theta_{0} \| \lesssim_P K_{n} s \{\log(n\vee p)/n\}^{1/2} = o(1)$.

Combining these results, we have $\hat h \in \mathcal{H} = \mathcal{H}_{1} \times \mathcal{H}_{2}$   with probability $1-o(1)$, where \begin{align*}
\mathcal{H}_1 &= \{ \tilde h_1 : \tilde h_1(x)=x^{\T} \beta, \| \beta \|_{0} \leq C_3s, \Ep[ \{\tilde h_1(x)-g(x)\}^2] \leq \ell_n' s(\log a_n)/n\}, \\
\mathcal{H}_2 &= \{ \tilde h_2 : \tilde h_2(x)=x^{\T} \theta, \| \theta \|_{0} \leq C_3s, {\textstyle \sup}_{\| x \|_{\infty} \leq K_{n}} | \tilde h_{2}(x) - m(x) | \leq c_{3}, \\
&\qquad \Ep[ \{\tilde h_2(x)-m(x)\}^2] \leq \ell_n' s(\log a_n)/n\},
\end{align*}
with $C_{3}$ a sufficiently large constant and $\ell_n'\uparrow\infty$ sufficiently slowly.

To verify Condition \ref{condition: AS} (ii), observe that $\mathcal{F} = \varphi ( \mathcal{G} )\cdot \mathcal{G}'$, where $\varphi(u)=1/2 - 1(u\leq 0)$, and $\mathcal{G}$ and $\mathcal{G}'$ are the classes of functions defined by
\begin{align*}
\mathcal{G} &=\{ (y,d,x^{\T})^{\T} \mapsto y - \alpha d - \tilde{h}_{1}(x) : \alpha \in \mathcal{A}, \tilde{h}_{1} \in \mathcal{H}_1\},  \\
\mathcal{G}' &= \{ (y,d,x^{\T})^{\T} \mapsto d - \tilde{h}_{2}(x) : \tilde{h}_{2} \in \mathcal{H}_{2} \}.
\end{align*}
The classes $\mathcal{G}, \mathcal{G}'$,  and $\varphi(\mathcal{G})$, as $\varphi$ is monotone and by Lemma 2.6.18 in \cite{vdV-W}, consist of unions of $p$ choose $C_{3}s$ VC-subgraph classes with VC indices at most $C_{3}s+3$.
The class $\varphi(\mathcal{G})$ is uniformly bounded by $1$; recalling $d=m(x) + v$, for $\tilde{h}_{2} \in \mathcal{H}_{2}$,
$| d - \tilde{h}_{2}(x) | \leq c_{3} + |v|$. Hence by Theorem 2.6.7 in \cite{vdV-W}, we have $\ent \{ \varepsilon, \varphi(\mathcal{G}) \} \vee \ent (\varepsilon, \mathcal{G}') \leq C'' s \log (a_{n}/\varepsilon)$ for all $0 <  \varepsilon \leq 1$ for some constant $C''$ that depends only on $C_{3}$; see the proof of Lemma 11 in \cite{BC-SparseQR} for related arguments.
It is now straightforward to verify that the class $\mathcal{F}=\varphi ( \mathcal{G} )\cdot \mathcal{G}'$ satisfies the stated entropy condition; see the  proof of Theorem 3 in \cite{andrews:emp}, relation (A.7).

To verify Condition \ref{condition: AS} (iii), observe that whenever $\tilde{h}_{2} \in \mathcal{H}_{2}$,
\[
| \varphi \{ y -\alpha d - \tilde h_{1}(x) \} \{ d-\tilde h_{2}(x) \} | \leq c_{3} + |v|,
\]
which has six bounded moments, so that Condition \ref{condition: AS} (iii) is satisfied with $q=6$.

To verify Condition \ref{condition: AS} (iv), take $s = \ell_n's$ with $\ell_n'\uparrow\infty$ sufficiently slowly and
$$\rho_n =n^{-1/2}  ( \ell'_{n} s \log a_n  )^{1/2}.$$ 
As $\varsigma=1$, $L_{1n} \lesssim M_n^2$ and $L_{2n}\lesssim M_n$, Condition \ref{condition: AS} (iv) is satisfied provided that
$M_n^2 s^3_n \log^3 a_n =o(n)$ and $M_n^4 s^2_n \log^2 a_n =o(n)$, which are implied by Condition \ref{Condition I} (v) with $\ell_n'\uparrow\infty$ sufficiently slowly.


Therefore, for $\sigma_n^2=\Ep[\Gamma^{-2}\psi\{w,\alpha_0,h(x)\}] = \Ep(v_\ii^2)/\{4f_\epsilon^2(0)\}$, by Theorem \ref{theorem2} we obtain the first result: $\sigma_n^{-1}n^{1/2}(\check\alpha-\alpha_0)\to \mathcal{N}(0,1)$.

Next we prove the second result regarding $nL_n(\alpha_0)$. First consider the denominator of $L_n(\alpha_0)$. We have
\begin{align*}
| \En (\hat v_\ii^2)  -  \En (v_\ii^2) | &= | \En \{ (\hat v_\ii-v_\ii)(\hat v_\ii+v_\ii)\}| \leq \|\hat v_\ii-v_\ii\|_{2,n}\|\hat v_\ii+v_\ii\|_{2,n}  \\
&\leq \|x_\ii^{\T} (\tilde \theta-\theta_{0})\|_{2,n} \{ 2\|v_\ii\|_{2,n}+ \|x_\ii^{\T} (\tilde \theta-\theta_{0})\|_{2,n}\}  = o_{P}(1),
\end{align*}
where we have used $\|v_\ii\|_{2,n}\lesssim_P  \{\Ep(v_{\ii}^{2})\}^{1/2} = O(1)$ and $\|x_\ii^{\T} (\tilde \theta-\theta_{0})\|_{2,n}=o_P(1)$.

Second consider the numerator of $L_n(\alpha_0)$. Since $\Ep[\psi\{w,\alpha_0,h(x)\}]=0$ we have
\begin{equation*}
\begin{array}{lr}
\En[\psi\{w,\alpha_0,\hat h(x)\}] & =  \En[\psi\{w,\alpha_0,h(x)\}] + o_{P}(n^{-1/2}),
\end{array}\end{equation*}
using the expansion in the displayed equation of \emph{Step} 1 in the proof of Theorem \ref{theorem2} evaluated at $\alpha_0$ instead of $\tilde \alpha_j$. Therefore, using the identity that $nA_n^2 = nB_n^2 + n(A_n-B_n)^2+2nB_n(A_n-B_n)$ with
\[
A_n =\En[ \psi\{w,\alpha_0,\hat h(x)\}], \ B_n = \En[\psi\{w,\alpha_0,h(x)\}], \ \ |B_n| \lesssim_P \{ \Ep (v_\ii^2) \}^{1/2} n^{-1/2},
\]
we have
\[
nL_n(\alpha_0) = \frac{4n|\En[\psi\{w,\alpha_0,\hat h(x)\}]|^2}{\En (\hat v_\ii^2)} =\frac{4n|\En[\psi\{w,\alpha_0, h(x)\}]|^2}{\En[\psi^2\{w,\alpha_0, h(x)\}]} +o_{P}(1)
\]
since $\Ep (v_\ii^2)$ is bounded away from zero. By Theorem 7.1 in \cite{delapena}, and the moment conditions $\Ep (d^4) \leq C_{1}$ and $\Ep (v^2) \geq c_{1}$, the following holds for the self-normalized sum
\[
I = \frac{2n^{1/2}\En[\psi\{w,\alpha_0, h(x)\}]}{(\En[\psi^2\{w,\alpha_0, h(x)\}])^{1/2}}\to \mathcal{N}(0,1)
\]
in distribution and the desired result follows since $nL_n(\alpha_0) = I^2 +o_{P}(1)$.

\begin{remark}
An inspection of the proof leads to the following stochastic expansion:
\begin{align*}
 \En [\psi \{ w,\hat \alpha,\hat h(x) \}] &=-\{ f_\epsilon \Ep ( v_\ii^2) \} (\hat \alpha - \alpha_0)+   \En[\psi \{ w,\alpha_0, h(x)\}] \\
&\quad + o_{P}( n^{-1/2} + n^{-1/4} | \hat \alpha - \alpha_{0}| ) + O_{P}(| \hat \alpha - \alpha_{0}|^{2}),
\end{align*}
where $\hat \alpha$ is any consistent estimator of $\alpha_{0}$.
Hence provided that $| \hat \alpha - \alpha_{0} | = o_{P}(n^{-1/4})$, the remainder term in the above expansion is $o_{P}(n^{-1/2})$,
and the one-step estimator $\check \alpha$ defined by
\begin{equation*}
\check \alpha = \hat \alpha + \{ \En ( f_{\epsilon} \hat v_{\ii}^{2})\}^{-1}  \En [ \psi\{w,\hat \alpha,\hat h(x)\}]
\end{equation*}
 has the following stochastic expansion:
\begin{align*}
 \check \alpha &= \hat \alpha +  \{ f_{\epsilon} \Ep ( v_{\ii}^{2} ) + o_{P}(n^{-1/4}) \}^{-1} [ -\{ f_\epsilon \Ep ( v_\ii^2) \} (\hat \alpha - \alpha_0)+   \En [\psi \{w,\alpha_0,h(x) \}] + o_{P}(n^{-1/2}) ] \\
 &= \alpha_{0} + \{ f_{\epsilon} \Ep ( v_{\ii}^{2} ) \} ^{-1}   \En[ \psi \{ w,\alpha_0,h(x) \}] + o_{P}(n^{-1/2}),
\end{align*}
so that $\sigma_{n}^{-1}n^{1/2} (\check \alpha - \alpha_{0}) \to \mathcal{N}(0,1)$ in distribution.
\end{remark}

\subsection{Proof of Theorem \ref{theorem:inferenceAlg1}: Algorithm 2}

\begin{proof}[Proof of Theorem \ref{theorem:inferenceAlg1}: Algorithm 2]
The proof is essentially the same as the proof for Algorithm 1 and just verifying the rates for the penalized estimators.


The estimator $\hat\beta$ is based on $\ell_1$-penalized median regression. Condition \ref{ConditionPLAD} is implied by Condition \ref{Condition I}, see the proof for Algorithm 1. By Lemma \ref{Theorem:L1QRnp} and Remark \ref{Comment:Alpha} we have with probability $1-o(1)$
\begin{equation*}
\|x_\ii^{\T} (\hat \beta-\beta_0)\|_{P,2} \lesssim \{s\log(n\vee p)/n\}^{1/2}, \ \ \  |\hat\alpha-\alpha_0|\lesssim \{s\log(p\vee n)/n\}^{1/2} = o(1/\log n),
\end{equation*}
 because $s^3\log^3 (n\vee p) = o(n)$ and the required side condition holds. Indeed,  without loss of generality assume that $\tilde T$ contains $d$ so that for $\tilde x_i = (d_i,x_i^{\T} )^{\T} $, $\delta = (\delta_d,\delta_x^{\T} )^{\T} $,
because $\bar{\kappa}_{c_{0}}$ is bounded away from zero, and the fact that $\Ep (| d_{\ii} |^{3}) = O(1)$,  we have
\begin{equation}\label{VerificationLemma4a} \begin{array}{rl}
\inf_{\delta\in \Delta_{c_{0}}}\frac{\|\tilde x_\ii^{\T} \delta\|_{P,2}^{3}}{\Ep (|\tilde x_\ii^{\T} \delta|^3)} & \geq \inf_{\delta\in \Delta_{c_{0}}}\frac{\|\tilde x_\ii^{\T} \delta\|_{P,2}^{2}\|\delta_T\|\bar{\kappa}_{c_{0}}}{4\Ep (|x_\ii'\delta_x|^3)+4\Ep(|d_\ii\delta_d|^3)}\\
 & \geq \inf_{\delta\in \Delta_{c_{0}}}\frac{\|\tilde x_\ii^{\T} \delta\|_{P,2}^{2}\|\delta_T\|\bar{\kappa}_{c_{0}}}{4K_n\|\delta_x\|_1\Ep(|x_\ii^{\T} \delta_x|^2)+4|\delta_d|^3\Ep(|d_\ii|^3)} \\
 & \geq \inf_{\delta\in \Delta_{c_{0}}}\frac{\|\tilde x_\ii^{\T} \delta\|_{P,2}^{2}\|\delta_T\|\bar{\kappa}_{c_{0}}}{\{ 4K_n\|\delta_x\|_1+4|\delta_d|\Ep(|d_\ii|^3)/\Ep (|d_\ii|^2)\}\{\Ep(|x_\ii^{\T} \delta_x|^2)+\Ep(|\delta_dd_\ii|^2)\}} \\
 & \geq \inf_{\delta\in \Delta_{c_{0}}}\frac{\|\tilde x_\ii^{\T} \delta\|_{P,2}^{2}\|\delta_T\|\bar{\kappa}_{c_{0}}}{8(1+3c_{0}')\|\delta_T\|_1 \{ K_n+O(1)\} \{2\Ep (|\tilde x_\ii^{\T} \delta_x|^2)+3\Ep(|\delta_dd_\ii|^2)\}} \\
 & \geq \inf_{\delta\in \Delta_{c_{0}}}\frac{\|\tilde x_\ii^{\T} \delta\|_{P,2}^{2}\|\delta_T\|\bar{\kappa}_{c_{0}}}{8(1+3c_{0}')\|\delta_T\|_1\{ K_n+O(1)\}\Ep(|\tilde x_\ii^{\T} \delta_x|^2)(2+3/\bar{\kappa}_{c_{0}}^2)} \\
& \geq \frac{\bar{\kappa}_{c_{0}}/s^{1/2}}{8\{ K_n+O(1) \}(1+3c_{0}')\{2+3\Ep(d^2)/\bar{\kappa}_{c_{0}}^2\}} \gtrsim \frac{1}{s^{1/2}K_n}.
\end{array}
\end{equation}
Therefore, since $K_n^2s^2\log^2(p\vee n) = o(n)$, we have
 \begin{equation}\label{VerificationLemma4b}
 \frac{n^{1/2}\bar{\kappa}_{c_{0}}}{\{s\log(p\vee n)\}^{1/2}}\inf_{\delta\in \Delta_{c_{0}}}\frac{\|\tilde x_\ii^{\T}\delta\|_{P,2}^{3}}{\Ep(|\tilde x_\ii^{\T}\delta|^3)}\gtrsim \frac{n^{1/2}}{K_ns\log^{1/2}(p\vee n)}\to \infty.
 \end{equation}

The estimator $\hat\theta$ is based on lasso. Condition \ref{ConditionHL} is implied by Condition \ref{Condition I} and Lemma \ref{cor:LoadingConvergence} applied twice with $\zeta_i=v_i$ and $\zeta_i=d_i$ as $K_{n}^4\log p =o(n)$. By Lemma \ref{Thm:RateEstimatedLasso} we have $\|x_\ii^{\T} (\hat \theta-\theta_{0})\|_{2,n} \lesssim_P \{s\log(n\vee p)/n\}^{1/2}$. Moreover, by Lemma \ref{corollary3:postrate} we have $\|\hat \theta\|_0\lesssim s$ with probability $1-o(1)$.
The required rate in the $\|\cdot\|_{P,2}$ norm follows from Lemma \ref{Lemma:EquivNorm}.

\end{proof}

\section{Additional Monte-Carlo Experiments}

In this section we provide additional experiments to further examine the finite sample performance of the proposed estimators. The experiments investigate the performance of the method on approximately spare models and complement the experiments on exactly sparse models presented in the main text. Specifically, we considered the following regression model:
\begin{equation}\label{MC}
y = d\alpha_0 + x^{\T} (c_y\theta_0) + \epsilon, \  \ \ \  d = x^{\T} (c_d\theta_0) + v,
\end{equation}
where $\alpha_0 = 1/2$, and now we have $\theta_{0j} = 1/j^2, j=1,\ldots,p$. The other features of the design are the same as the design presented in the main text. Namely, the vector $x = (1,z^{\T} )^{\T} $ consists of an intercept and covariates $z \sim N(0,\Sigma)$, and the errors $\epsilon$ and $v$ are independently and identically
distributed as $\mathcal{N}(0,1)$. The dimension $p$ of the covariates $x$ is $300$, and the sample size $n$ is $250$.  The regressors are correlated with $\Sigma_{ij} = \rho^{|i-j|}$ and $\rho = 0{\cdot}5$. We vary the $R^2$ in the two equations, denoted by $R^2_y$ and $R^2_d$ respectively, in the set $\{0, 0{\cdot}1,\ldots,0{\cdot}9\}$, which results in 100 different designs induced by the different pairs of $(R^2_y, R^2_d)$. We performed
500 Monte Carlo repetitions for each.

In this design, the vector $\theta_0$ has all $p$ components different from zero. Because the coefficients decay it is conceivable that it can be well approximated by considering only a few components, typically the ones associated with the largest coefficients in absolute values. The coefficients omitted from that construction define the approximation error. However, the number of coefficients needed to achieve a good approximation will also depend on the scalings $c_y$ and $c_d$ since they multiply all coefficients. Therefore, if $c_y$ or $c_d$ is large the approximation might require a  larger number of coefficients which can violate our sparsity requirements. This is the main distinction from the an exact sparse designs considered in the main text.

The simulation study focuses on Algorithm 1 since the algorithm based on double selection worked similarly. Standard errors are computed using the formula (\ref{Eq:RobustSE}).  As the main benchmark we consider the standard post-model selection estimator $\widetilde \alpha$ based on the post-$\ell_1$-penalized median regression method, as defined in (\ref{def:postl1qr}).
\begin{figure}[h!]
\includegraphics[width=0.49\textwidth]{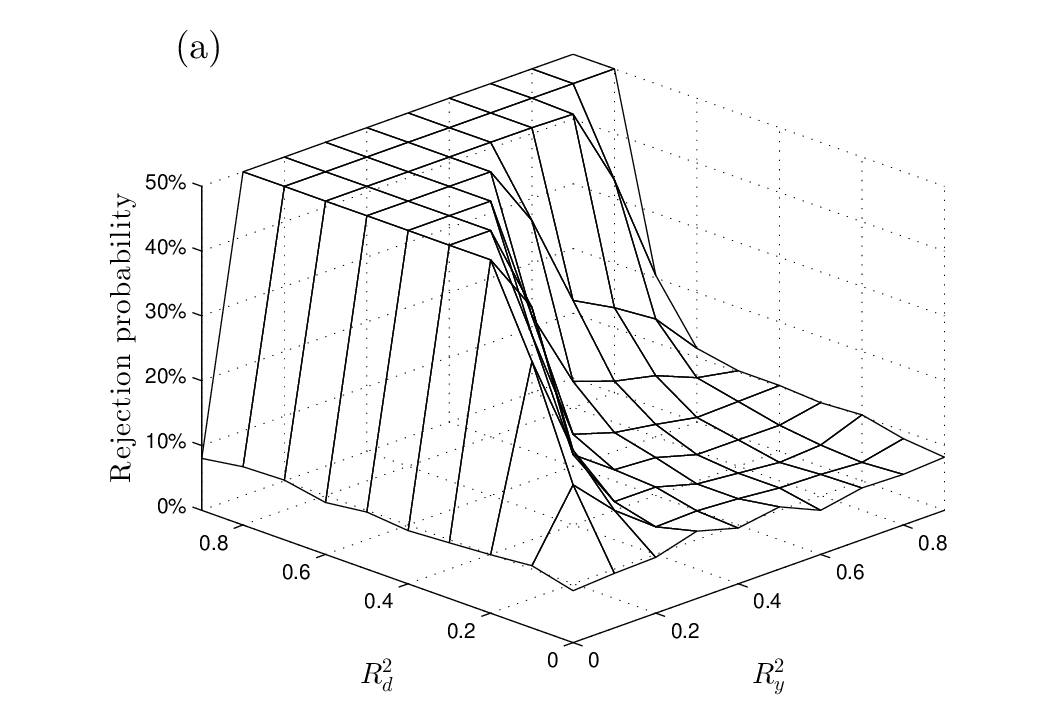}
\includegraphics[width=0.49\textwidth]{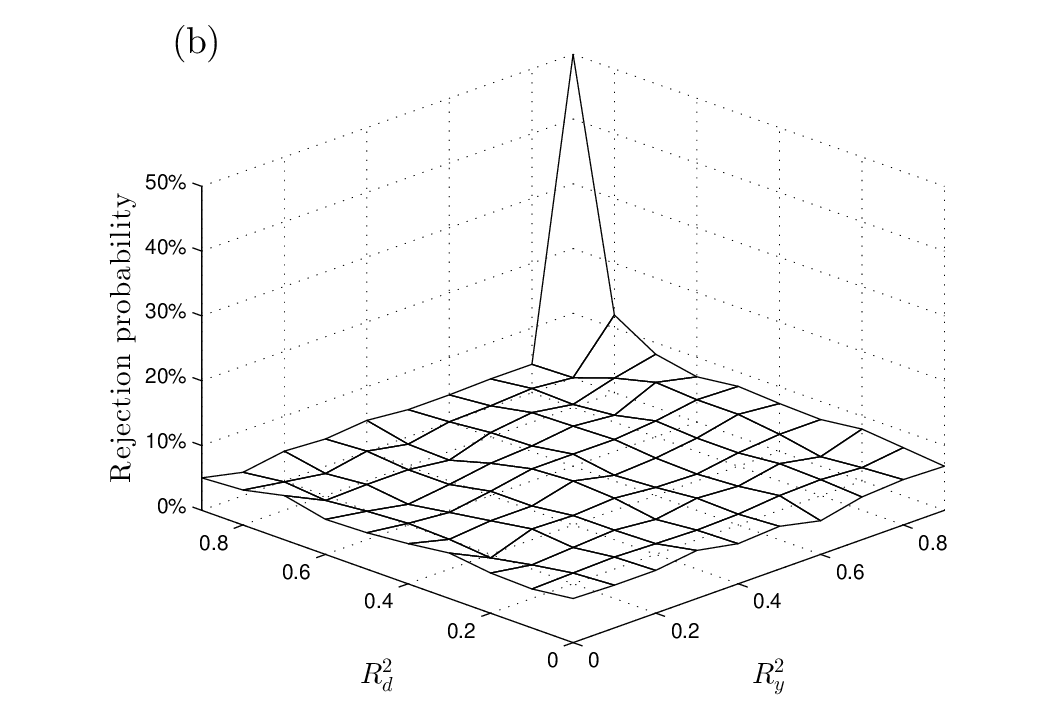}
\caption{The empirical rejection probabilities of the nominal $5\%$ level tests of a true hypothesis based on: (a) the standard post-model selection procedure based on $\widetilde \alpha$, and (b) the proposed post-model selection procedure based on $\check \alpha$.  Ideally we should observe a flat surface at the $5\%$ rejection rate (of a true null).}
\label{Fig:SimFirstSupp}
\end{figure}

\begin{figure}[h!]
\includegraphics[width=0.49\textwidth]{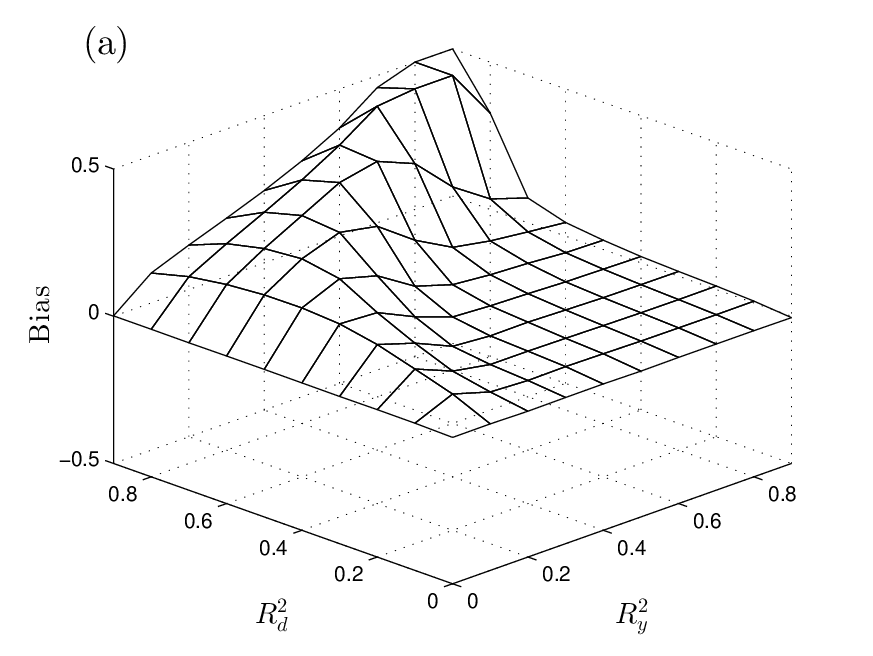}
\includegraphics[width=0.49\textwidth]{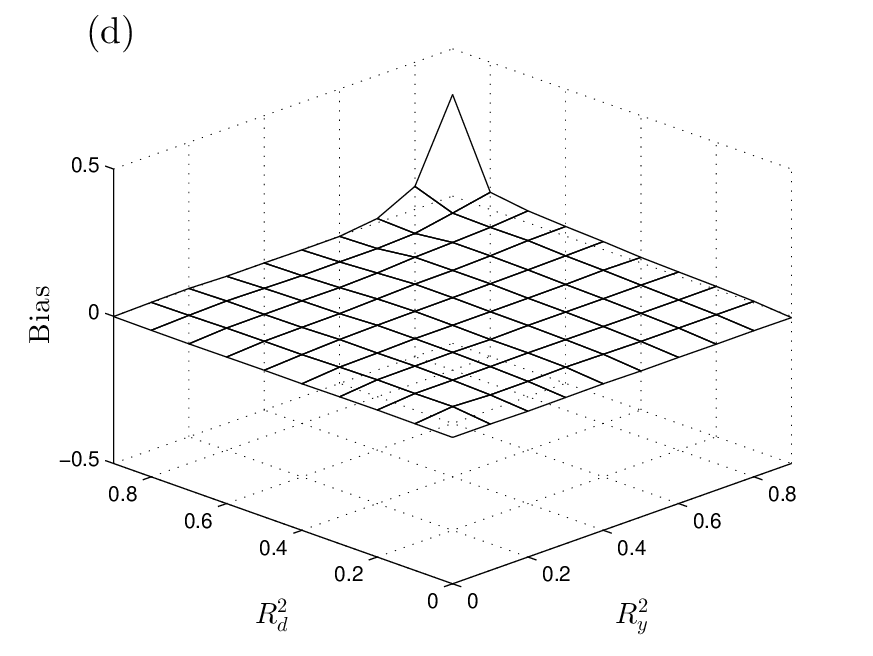}
\includegraphics[width=0.49\textwidth]{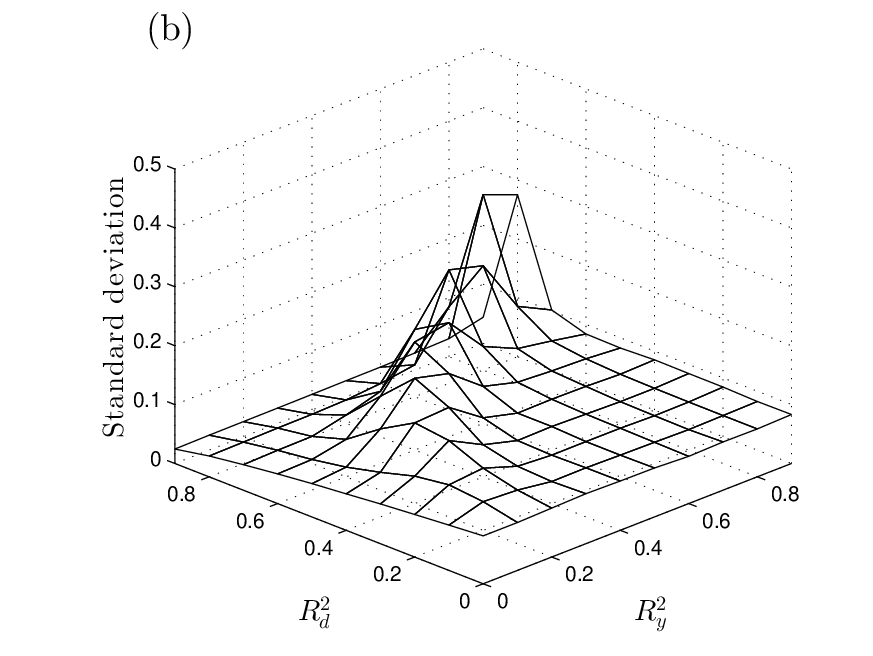}
\includegraphics[width=0.49\textwidth]{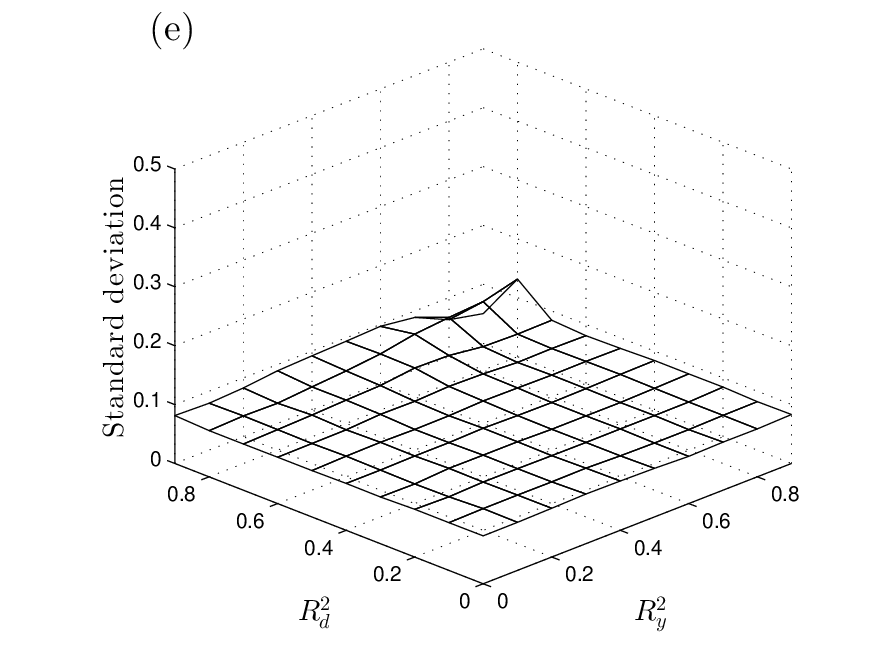}
\includegraphics[width=0.49\textwidth]{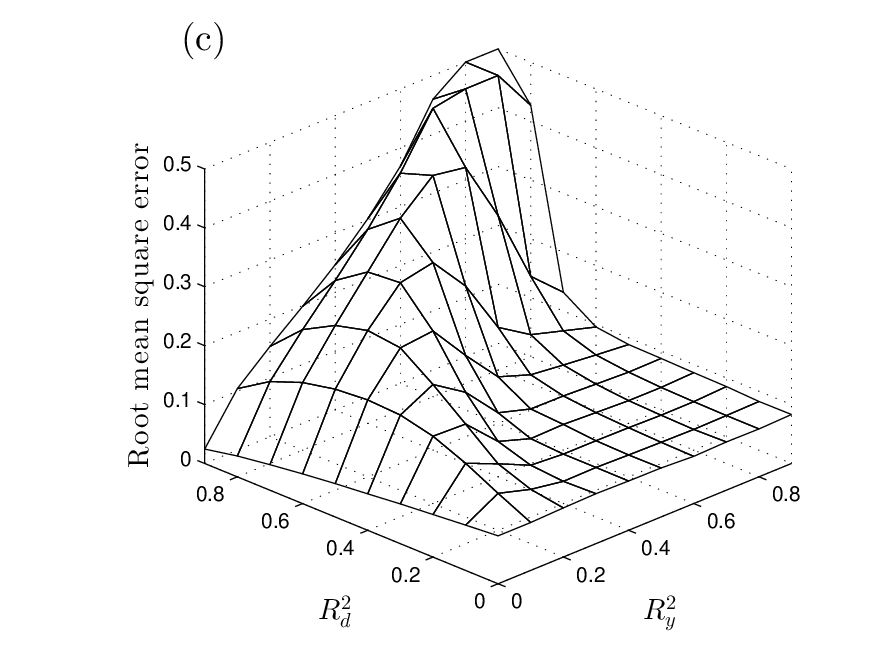}
\includegraphics[width=0.49\textwidth]{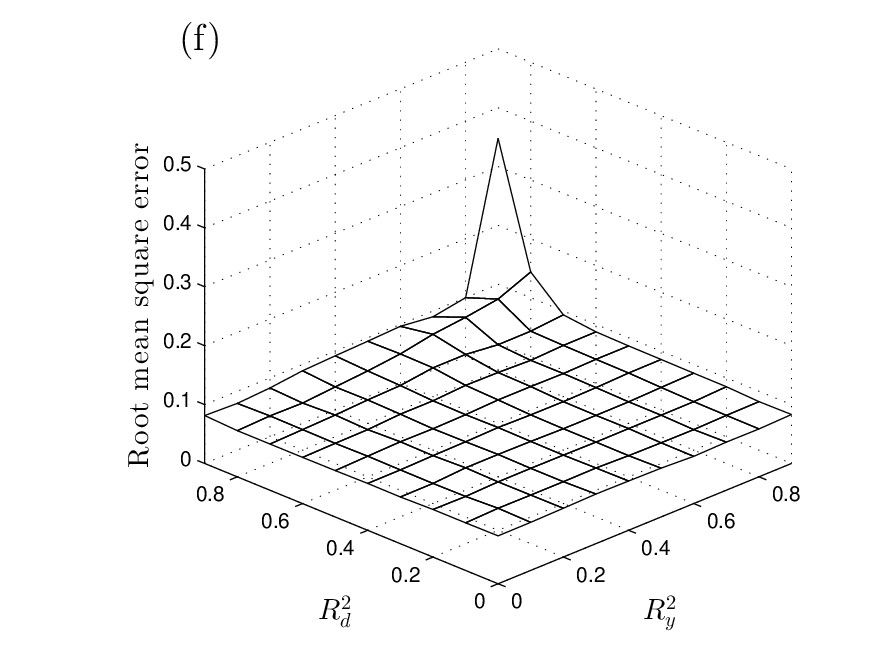}
\caption{
Mean bias (top row), standard deviation (middle
row), root mean square error (bottom row) of the standard post-model
selection estimator $\widetilde \alpha$ (panels (a)-(c)), and of the
proposed post-model selection estimator $\check \alpha$ (panels (d)-(f)).}\label{Fig:SimSecondSupp} 
\end{figure}

Figure \ref{Fig:SimFirstSupp} displays the empirical rejection probability of tests of a true hypothesis $\alpha = \alpha_0$, with nominal size of tests equal to $5\%$. The  rejection
frequency of the  standard post-model selection inference procedure  based upon $\widetilde \alpha$  is very fragile, see left plot. Given the approximately sparse model considered here, there is no true model to be perfectly recovered and the  rejection frequency deviates substantially from the ideal rejection frequency of $5\%$.  The right plot shows the corresponding empirical rejection probability for the proposed procedures based on  estimator $\check \alpha$ and the result (\ref{Eq:Result1}). The performance is close to the ideal level of $5\%$ over 99 out of the 100 designs considered in the study which illustrate the  uniformity property. The design for which the procedure does not perform well corresponds to $R_d^2=0{\cdot}9$ and $R_y^2 = 0{\cdot}9$.

Figure \ref{Fig:SimSecondSupp} compares the performance of the standard post-selection estimator $\widetilde \alpha$, as defined in (\ref{def:postl1qr}), and our proposed post-selection estimator  $\check \alpha$ obtained via Algorithm 1.  We display results in the same three metrics used in the main text: mean bias, standard deviation, and root mean square error of the two approaches. In those metrics, except for one design, the performance for approximately sparse models is very similar to the performance of exactly sparse models. The proposed post-selection estimator $\check \alpha$ performs well in all three metrics while the standard post-model selection estimators $\widetilde \alpha$ exhibits a large bias in many of the dgps considered. For the design with $R_d^2=0{\cdot}9$ and $R_y^2 = 0{\cdot}9$, both procedures breakdown.

Except for the design with largest values of $R^2$'s, $R_d^2=0{\cdot}9$ and $R_y^2 =0{\cdot}9$, the results are very similar to the results presented in the main text for an exactly sparse model where the proposed procedure performs very well. The design with the largest values of $R^2$'s correspond to large values of $c_y$ and $c_d$. In that case too many coefficients are needed to achieve a good approximation for the unknown functions $x^{\T} (c_y\theta_0)$ and $x^{\T} (c_d\theta_0)$ which translates into a (too) large value of $s$ in the approximate sparse model. Such performance is fully consistent with the theoretical result derived in Theorem \ref{theorem2} which covers approximately sparse models but do impose sparsity requirements.


\section{Auxiliary Technical Results}
\label{Sec:Auxiliary}

In this section we collect some auxiliary technical results. 

\begin{lemma}\label{cor:LoadingConvergence}
Let $(\zeta_1,x_1^{\T})^{\T},\ldots,(\zeta_n,x_n^{\T})^{\T}$ be independent random vectors where $\zeta_{1},\dots,\zeta_{n}$ are scalar while $x_{1},\dots,x_{n}$ are vectors in $\RR^{p}$.  Suppose that $\Ep ( \zeta^{4}_{i} ) < \infty$ for $i=1,\ldots,n$, and there exists a constant $K_{n}$ such that $\max_{i=1,\ldots,n} \| x_{i} \|_{\infty} \leq K_{n}$ almost surely. Then for every $\tau \in (0,1/8)$, with probability at least $1-8\tau$,
\begin{equation*}
\max_{j=1,\ldots,p} | n^{-1} {\textstyle \sum}_{i=1}^{n} \{ \zeta_{i}^{2} x_{ij}^{2}  - \Ep( \zeta_{i}^{2} x_{ij}^{2} ) \}  | \leq 4 K^{2}_{n} \{(2/n) \log(2p/\tau)\}^{1/2} \{{\textstyle \sum}_{i=1}^{n}  \Ep(\zeta_{i}^{4})/ (n\tau) \}^{1/2}.
\end{equation*}
\end{lemma}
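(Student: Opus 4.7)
Write $X_{ij} = \zeta_i^2 x_{ij}^2$ and $\bar X_{ij} = X_{ij} - \Ep X_{ij}$; set $S_n = \sum_{i=1}^n \zeta_i^4$. The bound we want to prove scales like $K_n^2\{\log(p/\tau)/n\}^{1/2}\cdot\{S_n/(n\tau)\}^{1/2}$, which is the ``sub-Gaussian'' rate one would get from Hoeffding's inequality if the summands $X_{ij}$ were bounded by $K_n^2\zeta_i^2$ with a deterministic grip on $\sum \zeta_i^4$. The plan is to create that grip via Markov's inequality, then exploit conditional Hoeffding after symmetrization. The main obstacle is precisely that $X_{ij}$ is not bounded uniformly in $i$: we only have $|X_{ij}|\le K_n^2\zeta_i^2$, and $\zeta_i^2$ is an unbounded random variable, so naive Hoeffding or Bernstein bounds fail. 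Truncation via a high-probability event on $S_n$ is what resolves this.

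\textbf{Step 1 (control $S_n$ by Markov).} By Markov's inequality applied to the nonnegative random variable $S_n$,
\[
\Pr\bigl(S_n > \Ep(S_n)/\tau\bigr) \le \tau,
\]
so the event $A = \{S_n \le (1/\tau)\sum_{i=1}^n\Ep(\zeta_i^4)\}$ has probability at least $1-\tau$.

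\textbf{Step 2 (symmetrization).} Introduce independent Rademacher signs $(\epsilon_i)_{i=1}^n$ independent of everything else. By a standard symmetrization inequality for the probability of a norm exceeding a level (see, e.g., Lemma 2.3.7 in \cite{vdV-W}), there is a universal constant such that
\[
\Pr\Bigl(\max_{j}\bigl|{\textstyle\sum_{i=1}^n}\bar X_{ij}\bigr| > 4u\Bigr) \le 4\,\Pr\Bigl(\max_{j}\bigl|{\textstyle\sum_{i=1}^n}\epsilon_i X_{ij}\bigr| > u\Bigr).
\]
(The precise constants in the symmetrization step are not important; any standard version suffices, with the final constant in the lemma absorbing the slack. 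The factor $4$ and the event $A^c$ together account for at most $8\tau$ in the end.)

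\textbf{Step 3 (conditional Hoeffding on $A$).} Condition on $(\zeta_i,x_i)_{i=1}^n$. For each $j$, the variables $\epsilon_i X_{ij}$ are independent with $|\epsilon_i X_{ij}|\le K_n^2\zeta_i^2$, so Hoeffding's inequality gives
\[
\Pr\Bigl(\bigl|{\textstyle\sum_i}\epsilon_i X_{ij}\bigr| > u \,\Big|\, (\zeta,x)\Bigr) \le 2\exp\!\left(-\frac{u^2}{2K_n^4 S_n}\right).
\]
A union bound over $j=1,\dots,p$ and restriction to the event $A$ yield
\[
\Pr\Bigl(\max_j\bigl|{\textstyle\sum_i}\epsilon_i X_{ij}\bigr| > u,\; A \,\Big|\, (\zeta,x)\Bigr) \le 2p\exp\!\left(-\frac{u^2\tau}{2K_n^4\sum_i\Ep(\zeta_i^4)}\right).
\]
Choosing
\[
u = K_n^2\bigl\{2\log(2p/\tau)\bigr\}^{1/2}\bigl\{{\textstyle\sum_i}\Ep(\zeta_i^4)/\tau\bigr\}^{1/2}
\]
makes the right-hand side at most $\tau$, and integrating out gives an unconditional bound of $\tau$ on the event $A$.

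\textbf{Step 4 (assemble).} Combining Steps 1--3 and dividing through by $n$, the probability of the event
\[
\max_{j}\Bigl|\tfrac{1}{n}{\textstyle\sum_i}\bar X_{ij}\Bigr| > \tfrac{4u}{n} = 4K_n^2\bigl\{(2/n)\log(2p/\tau)\bigr\}^{1/2}\bigl\{{\textstyle\sum_i}\Ep(\zeta_i^4)/(n\tau)\bigr\}^{1/2}
\]
is at most $4\tau + \Pr(A^c) \le 5\tau$, which is absorbed into the stated $8\tau$ once the precise symmetrization constant is written out. This completes the plan; the only substantive device is the Markov truncation of $S_n$, which is what upgrades the unbounded summands to a setting where conditional Hoeffding applies.
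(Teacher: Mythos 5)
Your proposal is correct in substance, but it takes a more self-contained route than the paper. The paper's proof is a two-line application of a pre-packaged maximal inequality (Lemma~\ref{lem: maxineq}, imported from \cite{BelloniChernozhukovHansen2011}), which bounds $\max_j|n^{-1/2}\sum_i\{z_{ij}-\Ep(z_{ij})\}|$ by the maximum of two data-dependent quantile terms; the paper then simply bounds the median term by Chebyshev and the $(1-\tau)$-quantile of $\max_j(n^{-1}\sum_i z_{ij}^2)^{1/2}$ by $K_n^2\{S_n/n\}^{1/2}$ plus Markov, with $\tau=\delta$ giving the $1-8\tau$. What you have done is essentially re-derive that maximal inequality from scratch: your symmetrization plus conditional Hoeffding plus Markov-truncation of $S_n=\sum_i\zeta_i^4$ is exactly the machinery hidden inside the cited lemma. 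Your version buys transparency and self-containedness; the paper's buys brevity and reusability of the black-box inequality. Two points in your write-up deserve tightening. First, the symmetrization inequality for \emph{probabilities} (Lemma 2.3.7 of \cite{vdV-W}) is not unconditional: the factor-$4$ form you quote requires $\inf_j \Pr(|\sum_i\bar X_{ij}|\le 2u)\ge 1/2$. This does hold here — by Chebyshev, $\Pr(|\sum_i\bar X_{ij}|>2u)\le K_n^4\sum_i\Ep(\zeta_i^4)/(4u^2)=\tau/\{8\log(2p/\tau)\}<1/2$ for your choice of $u$ — but it must be checked rather than waved through as ``any standard version suffices,'' since the unconditional symmetrization lemmas are for expectations, not tail probabilities. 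Second, your final accounting ($4\tau+\Pr(A^c)\le 5\tau$) is not quite how the events compose: the clean chain is $\Pr(\max_j|\sum_i\bar X_{ij}|>4u)\le 4\Pr(\max_j|\sum_i\epsilon_iX_{ij}|>u)\le 4\{\Pr(\{\cdot\}\cap A)+\Pr(A^c)\}\le 4(\tau+\tau)=8\tau$, which lands exactly on the stated constant with no slack to spare. Neither issue is a gap in the mathematics, only in the bookkeeping.
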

\begin{proof}[Proof of Lemma \ref{cor:LoadingConvergence}]
The proof depends on the following maximal inequality derived in \cite{BelloniChernozhukovHansen2011}.

\begin{lemma}
\label{lem: maxineq}
Let $z_{1},\dots,z_{n}$ be independent random vectors in $\RR^{p}$. Then for every $\tau \in (0,1/4)$ and $\delta\in (0,1/4)$, with probability at least $1-4\tau-4\delta$,
\begin{multline*}
\hspace{-0.5cm}\max_{j=1,\ldots,p} | n^{-1/2} {\textstyle \sum}_{i=1}^{n} \{ z_{ij} - \Ep(z_{ij}) \}| \leq    \left[4 \{2\log(2p/\delta)\}^{1/2} \  Q\{1-\tau,\max_{j=1,\ldots,p} (n^{-1} {\textstyle \sum}_{i=1}^{n}z_{ij}^{2})^{1/2}\}\right] \\
\vee 2\max_{j=1,\ldots,p} Q[1/2, | n^{-1/2} {\textstyle \sum}_{i=1}^{n} \{ z_{ij} - \Ep(z_{ij}) \}|],
\end{multline*}
where for a random variable $Z$ we denote $Q(u,Z)=u \text{-quantile of} \ Z $ .
\end{lemma}

Going back to the proof of Lemma \ref{cor:LoadingConvergence}, let $z_{ij} = \zeta_{i}^{2} x_{ij}^{2}$.
By Markov's inequality, we have
\[
Q[1/2,| n^{-1/2} {\textstyle \sum}_{i=1}^{n} \{ z_{ij} - \Ep(z_{ij}) \}|] \leq \{2n^{-1} {\textstyle \sum}_{i=1}^{n} \Ep( z_{ij}^{2})\}^{1/2} \leq K_{n}^{2} \{(2/n){\textstyle \sum}_{i=1}^{n} \Ep (\zeta_{i}^{4})\}^{1/2},
\]
and
\begin{align*}
&Q\{1-\tau, \max_{j=1,\ldots,p} (n^{-1} {\textstyle \sum}_{i=1}^{n}z_{ij}^{2})^{1/2}\} \leq Q\{1-\tau, K_{n}^2(n^{-1} {\textstyle \sum}_{i=1}^{n}\zeta_{i}^{4})^{1/2}\} \\
&\quad \leq K_{n}^2 \{{\textstyle \sum}_{i=1}^{n}  \Ep(\zeta_{i}^{4})/ (n\tau) \}^{1/2}.
\end{align*}
Hence the conclusion of Lemma \ref{cor:LoadingConvergence} follows from application of Lemma \ref{lem: maxineq} with $\tau=\delta$.
\end{proof}

\begin{lemma}\label{Lemma:EquivNorm}
Under Condition \ref{Condition I}, there exists $\ell_{n}' \to \infty$ such that with probability $1-o(1)$,
\[
\sup_{\substack{\|\delta\|_0 \leq  \ell'_n s \\ \delta \neq 0 }} \left| \frac{\| x_\ii^{\T} \delta\|_{2,n}}{\| x_\ii^{\T} \delta\|_{P,2}} - 1 \right| = o(1).
\]
\end{lemma}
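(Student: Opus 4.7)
The plan is to show that the ratio of empirical to population Gram-matrix quadratic forms is uniformly close to $1$ on sparse vectors by a concentration argument, invoking the sparse Gram-matrix concentration result of Rudelson and Zhou (2011) that is already cited in the main text. First I would note that
\[
\left| \frac{\|x_{\ii}^{\T}\delta\|_{2,n}^{2}}{\|x_{\ii}^{\T}\delta\|_{P,2}^{2}} - 1 \right|
= \left| \frac{\delta^{\T} (\En - \Ep)(x_{\ii}x_{\ii}^{\T}) \delta }{\delta^{\T} \Ep(x_{\ii}x_{\ii}^{\T}) \delta } \right|,
\]
so using $x^{1/2} - 1 = O(x-1)$ near $x=1$, it suffices to control
\[
\Delta_{m} := \sup_{\|\delta\|_{0}\leq m,\ \delta\neq 0} \frac{|\delta^{\T} (\En-\Ep)(x_{\ii}x_{\ii}^{\T}) \delta|}{\|\delta\|^{2}}
= \max_{|S|\leq m}\| (\En - \Ep)(x_{\ii S} x_{\ii S}^{\T}) \|_{\mathrm{op}},
\]
and then divide by $\bsemin{m}$, which is bounded below by $c_{1}$ for $m \leq \ell_{n}s$ by Condition \ref{Condition I} (vi).

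Next, I would set $m = \ell_{n}' s$ for a sequence $\ell_{n}' \to \infty$ to be chosen in a moment and apply Theorem 3.1 of Rudelson and Zhou (2011). That result gives, under the boundedness assumption $\|x_{i}\|_{\infty} \leq K_{n}$ of Condition \ref{Condition I} (v) and the population bound $\bsemax{m}\leq C_{1}$, a constant $c>0$ such that whenever $K_{n}^{2} m \log(p\vee n) \log n \leq c \, n \, \epsilon^{2}$ one has $\Pr(\Delta_{m} \leq \epsilon) \geq 1- o(1)$. The growth condition in Condition \ref{Condition I} (v) provides $K_{n}^{2} s \log^{3}(p\vee n) = o(n)$, so dividing by $\log^{3}(p\vee n)$ one may choose $\ell_{n}' \uparrow \infty$ slowly enough that $K_{n}^{2} \ell_{n}' s \log(p\vee n) \log n = o(n)$; with this choice we may take $\epsilon = \epsilon_{n} \to 0$ and still satisfy the Rudelson--Zhou hypothesis.

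The final step then combines these. For any nonzero $\delta$ with $\|\delta\|_{0}\leq \ell_{n}'s$,
\[
\left| \frac{\|x_{\ii}^{\T}\delta\|_{2,n}^{2}}{\|x_{\ii}^{\T}\delta\|_{P,2}^{2}} - 1 \right|
\leq \frac{\Delta_{\ell_{n}'s}}{\bsemin{\ell_{n}'s}}
\leq \frac{\epsilon_{n}}{c_{1}} = o(1)
\]
on the event given by Rudelson--Zhou, which has probability $1-o(1)$. Taking square roots gives the stated conclusion uniformly in $\delta$ with $\|\delta\|_{0}\leq \ell_{n}'s$. The main obstacle is purely bookkeeping: verifying that the growth condition in Condition \ref{Condition I} (v) leaves enough slack to allow $\ell_{n}'\to\infty$ while the Rudelson--Zhou condition $K_{n}^{2}(\ell_{n}'s)\log(p\vee n)\log n = o(n)$ still holds; this is straightforward since that condition already contains $s^{2}\log^{3}(p\vee n)$, which is a factor $s\log^{2}(p\vee n)$ larger than what is required, yielding substantial room to let $\ell_{n}'$ diverge.
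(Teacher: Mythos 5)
Your proposal is correct and follows essentially the same route as the paper: the paper's entire proof is a one-line appeal to the sparse-eigenvalue concentration theorem of Rudelson and Zhou (the paper cites their Theorem 4.3 rather than 3.1), and your argument simply spells out the reduction to operator-norm concentration of empirical Gram submatrices plus the bookkeeping showing that Condition \ref{Condition I} (v) leaves enough slack to let $\ell_n'\to\infty$.
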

\begin{proof}[Proof of Lemma \ref{Lemma:EquivNorm}]
The lemma follows from application of Theorem 4.3 in \cite{RudelsonZhou2011}.
\end{proof}

\begin{lemma}\label{Lemma:Bound2nNorm}
Consider vectors $\hat\beta$ and $\beta_0$ in $\RR^{p}$ where $\|\beta_0\|_0\leq s$, and  denote by $\hat \beta^{(m)}$ the vector $\hat\beta$ truncated to have only its $m\geq s$ largest components in absolute value. Then
\begin{align*}
&\|\hat \beta^{(m)} - \beta_0\|_1  \leq 2\|\hat \beta - \beta_0 \|_1 \\
&\|x_\ii^{\T} \{ \hat \beta^{(2m)}-\beta_0 \} \|_{2,n}  \leq  \|x_\ii^{\T} (\hat \beta-\beta_0)\|_{2,n}  + \{\phi_{\max}^{x}(m)/m\}^{1/2}\|\hat\beta-\beta_0\|_1.
\end{align*}
\end{lemma}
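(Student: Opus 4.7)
The plan is to reduce both inequalities to a single observation about best $m$-term approximations in $\ell_1$. Specifically, among all $m$-sparse vectors $z$, the one minimizing $\|\hat\beta - z\|_1$ is obtained by retaining the $m$ largest entries of $\hat\beta$ in absolute value; that is, $\hat\beta^{(m)}$ is the best $m$-term $\ell_1$ approximation to $\hat\beta$. Since $\|\beta_0\|_0 \le s \le m$, $\beta_0$ is a feasible $m$-sparse competitor, so
\[
\|\hat\beta - \hat\beta^{(m)}\|_1 \;\le\; \|\hat\beta - \beta_0\|_1.
\]
This is the workhorse estimate for both parts.

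The first inequality then follows immediately from the triangle inequality:
\[
\|\hat\beta^{(m)} - \beta_0\|_1 \le \|\hat\beta^{(m)} - \hat\beta\|_1 + \|\hat\beta - \beta_0\|_1 \le 2\|\hat\beta - \beta_0\|_1.
\]

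For the second inequality, I would first apply the triangle inequality in the empirical prediction norm to get
\[
\|x_\ii^{\T}(\hat\beta^{(2m)} - \beta_0)\|_{2,n} \le \|x_\ii^{\T}(\hat\beta - \beta_0)\|_{2,n} + \|x_\ii^{\T}(\hat\beta - \hat\beta^{(2m)})\|_{2,n},
\]
so it suffices to prove $\|x_\ii^{\T}(\hat\beta - \hat\beta^{(2m)})\|_{2,n} \le \{\phi_{\max}^{x}(m)/m\}^{1/2}\|\hat\beta - \beta_0\|_1$. Here I would run a standard shelling argument: order indices in decreasing order of $|\hat\beta_j|$ and partition them into consecutive blocks $G_1, G_2, \dots$ each of size $m$ (the last block possibly smaller), so that $G_1 = \supp(\hat\beta^{(m)})$, $G_1 \cup G_2 = \supp(\hat\beta^{(2m)})$, and $\hat\beta - \hat\beta^{(2m)} = \sum_{k \ge 3} \hat\beta_{G_k}$. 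By the definition of $\phi_{\max}^{x}(m)$, $\|x_\ii^{\T}\hat\beta_{G_k}\|_{2,n} \le \{\phi_{\max}^{x}(m)\}^{1/2}\|\hat\beta_{G_k}\|$ for every $k$. Using the ordering, for $k \ge 3$ every entry on $G_k$ is bounded by the minimum of $|\hat\beta|$ on $G_{k-1}$, which is in turn bounded by $\|\hat\beta_{G_{k-1}}\|_1/m$; hence $\|\hat\beta_{G_k}\| \le m^{1/2}\|\hat\beta_{G_k}\|_\infty \le \|\hat\beta_{G_{k-1}}\|_1/m^{1/2}$.

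Summing the telescoped bound gives
\[
\sum_{k \ge 3}\|\hat\beta_{G_k}\| \le \frac{1}{m^{1/2}}\sum_{k \ge 2}\|\hat\beta_{G_k}\|_1 = \frac{\|\hat\beta_{G_1^c}\|_1}{m^{1/2}} = \frac{\|\hat\beta - \hat\beta^{(m)}\|_1}{m^{1/2}} \le \frac{\|\hat\beta - \beta_0\|_1}{m^{1/2}},
\]
where the last step invokes the opening observation. Combining with the triangle inequality and the sparse eigenvalue bound closes the argument. The only nontrivial step is the shelling bound for $\sum_{k \ge 3}\|\hat\beta_{G_k}\|$; the rest is routine. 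Tracking the constants, the fact that $\hat\beta^{(m)}$ is the $\ell_1$-optimal $m$-sparse approximant to $\hat\beta$ (rather than a generic $m$-sparse vector) is precisely what prevents an extra factor of $2$ from appearing in the second inequality.
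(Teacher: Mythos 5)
Your proof is correct and follows essentially the same route as the paper's: the $\ell_1$-optimality of the $m$-term truncation gives $\|\hat\beta-\hat\beta^{(m)}\|_1\le\|\hat\beta-\beta_0\|_1$, the first bound is the same triangle inequality, and your shelling into blocks $G_k$ is exactly the paper's decomposition $\hat\beta-\hat\beta^{(2m)}=\sum_{k\ge 3}\{\hat\beta^{(km)}-\hat\beta^{(km-m)}\}$ with the same bound $\|\hat\beta_{G_k}\|\le\|\hat\beta_{G_{k-1}}\|_1/m^{1/2}$. No gaps.
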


\begin{proof}[Proof of Lemma \ref{Lemma:Bound2nNorm}]
The first inequality follows from the triangle inequality
\[
\|\hat \beta^{(m)} - \beta_0\|_1 \leq \|\hat \beta - \hat\beta^{(m)}\|_1+\|\hat \beta - \beta_0 \|_1
\]
and the observation that $\|\hat \beta - \hat\beta^{(m)}\|_1 = \min_{\|\beta\|_0\leq m} \|\hat \beta - \beta\|_1\leq \|\hat\beta-\beta_0\|_1$ since $m\geq s=\|\beta_0\|_0$.

By the triangle inequality we have
\[
\|x_\ii^{\T} \{ \hat\beta^{(2m)}-\beta_0 \} \|_{2,n}  \leq \|x_{\ii}^{\T} (\hat\beta-\beta_0)\|_{2,n} + \|x_{\ii}^{\T} \{ \hat\beta^{(2m)}-\hat\beta \} \|_{2,n}.
\]
For an integer $k\geq 2$, $\|\hat \beta^{(km)}-\hat\beta^{(km-m)}\|_0 \leq m$ and $\hat\beta-\hat\beta^{(2m)} = \sum_{k\geq3}\{\hat\beta^{(km)}-\hat\beta^{(km-m)}\}$.  Moreover,
given the monotonicity of the components, $$\|\hat \beta^{(km+m)}-\hat\beta^{(km)}\| \leq \|\hat \beta^{(km)}-\hat\beta^{(km-m)}\|_1/m^{1/2}.$$ Then
\begin{align*}
&\|x_\ii^{\T} \{ \hat\beta-\hat\beta^{(2m)} \} \|_{2,n}  = \|x_\ii^{\T} {\textstyle \sum}_{k\geq3}\{\hat\beta^{(km)}-\hat\beta^{(km-m)}\}\|_{2,n} \leq {\textstyle \sum}_{k\geq 3}\| x_\ii^{\T} \{\hat\beta^{(km)}-\hat\beta^{(km-m)}\} \|_{2,n} \\
&\leq \{\phi_{\max}^{x}(m)\}^{1/2}{\textstyle \sum}_{k\geq 3}\| \hat\beta^{(km)}-\hat\beta^{(km-m)} \| \leq   \{\phi_{\max}^{x}(m)\}^{1/2}{\textstyle \sum}_{k\geq 2}  \| \hat\beta^{(km)}-\hat\beta^{(km-m)}\|_1/m^{1/2} \\
&=   \{\phi_{\max}^{x}(m)\}^{1/2}  \| \hat\beta-\hat\beta^{(m)}\|_1/m^{1/2}  \leq   \{\phi_{\max}^{x}(m)\}^{1/2}  \| \hat\beta-\beta_0\|_1/m^{1/2},
\end{align*}
where the last inequality follows from the arguments used to show the first result.
\end{proof}

  \bibliographystyle{plain}

\end{document}